\newcommand\redsout{\bgroup\markoverwith{\textcolor{red}{\rule[0.5ex]{2pt}{0.4pt}}}\ULon}
\newcommand\gsout{\bgroup\markoverwith{\textcolor{green}{\rule[0.5ex]{2pt}{0.4pt}}}\ULon}
\newcommand{\PreserveBackslash}[1]{\let\temp=\\#1\let\\=\temp}
\newcolumntype{C}[1]{>{\PreserveBackslash\centering}p{#1}}
\newcolumntype{R}[1]{>{\PreserveBackslash\raggedleft}p{#1}}
\newcolumntype{L}[1]{>{\PreserveBackslash\raggedright}p{#1}}
\newcommand{\bu}{{\bm u}}
\newcommand{\bx}{{\bm x}}
\newcommand{\bz}{{\bm z}}
\newcommand{\bbe}{{\bm E}}
\newcommand{\bbl}{{\bm L}}
\newcommand{\bbu}{{\bm U}}
\newcommand{\bbx}{{\bm X}}
\newcommand{\bbz}{{\bm Z}}
\newcommand{\bmu}{{\boldsymbol{\mu}}}
\newcommand{\bdelta}{{\boldsymbol{\delta}}}
\newcommand{\blambda}{{\boldsymbol{\lambda}}}
\newcommand{\bsigma}{{\boldsymbol{\sigma}}}
\newcommand*\bell{\ensuremath{\boldsymbol\ell}}
\newenvironment{itemize*}%
 {\begin{itemize}%
\setlength{\itemsep}{0.5em}%
\setlength{\parskip}{0pt}}%
 {\end{itemize}}
\newcommand{\bepsilon}{{\boldsymbol{\epsilon}}}
\newcommand{\Prb}{\mathbb{P}} 
\newcommand{\Exp}{\mathbb{E}}
\let\N\Natural
\let\R\Real
\newcommand{\bzero}{{\boldsymbol{0}}}
\newcommand{\ubar}[1]{\text{\b{$#1$}}}
\let\save@mathaccent\mathaccent
\newcommand*\if@single[3]{%
 \setbox0\hbox{${\mathaccent'0362{#1}}^H$}%
 \setbox2\hbox{${\mathaccent'0362{\kern0pt#1}}^H$}%
 \ifdim\ht0=\ht2 #3\else #2\fi
 }
\newcommand*\rel@kern[1]{\kern#1\dimexpr\macc@kerna}
\newcommand*\widebar[1]{\@ifnextchar^{{\wide@bar{#1}{0}}}{\wide@bar{#1}{1}}}
\newcommand*\wide@bar[2]{\if@single{#1}{\wide@bar@{#1}{#2}{1}}{\wide@bar@{#1}{#2}{2}}}
\newcommand*\wide@bar@[3]{%
 \begingroup
 \def\mathaccent##1##2{%
\let\mathaccent\save@mathaccent
\if#32 \let\macc@nucleus\first@char \fi
\setbox\z@\hbox{$\macc@style{\macc@nucleus}_{}$}%
\setbox\tw@\hbox{$\macc@style{\macc@nucleus}{}_{}$}%
\dimen@\wd\tw@
\advance\dimen@-\wd\z@
\divide\dimen@ 3
\@tempdima\wd\tw@
\advance\@tempdima-\scriptspace
\divide\@tempdima 10
\advance\dimen@-\@tempdima
\ifdim\dimen@>\z@ \dimen@0pt\fi
\rel@kern{0.6}\kern-\dimen@
\if#31
 \overline{\rel@kern{-0.6}\kern\dimen@\macc@nucleus\rel@kern{0.4}\kern\dimen@}%
 \advance\dimen@0.4\dimexpr\macc@kerna
 \let\final@kern#2%
 \ifdim\dimen@<\z@ \let\final@kern1\fi
 \if\final@kern1 \kern-\dimen@\fi
\else
 \overline{\rel@kern{-0.6}\kern\dimen@#1}%
\fi
 }%
 \macc@depth\@ne
 \let\math@bgroup\@empty \let\math@egroup\macc@set@skewchar
 \mathsurround\z@ \frozen@everymath{\mathgroup\macc@group\relax}%
 \macc@set@skewchar\relax
 \let\mathaccentV\macc@nested@a
 \if#31
\macc@nested@a\relax111{#1}%
 \else
\def\gobble@till@marker##1\endmarker{}%
\futurelet\first@char\gobble@till@marker#1\endmarker
\ifcat\noexpand\first@char A\else
 \def\first@char{}%
\fi
\macc@nested@a\relax111{\first@char}%
 \fi
 \endgroup
}
\begin{document}


\RUNAUTHOR{Chen, Sturt, Xie}
\RUNTITLE{Blessing of Strategic Customers in Personalized Pricing}

\TITLE{The Blessing of Strategic Customers \\ in Personalized Pricing}

\ARTICLEAUTHORS{
\AUTHOR{Zhi Chen}\AFF{Department of Decisions, Operations and Technology,  The Chinese University of Hong Kong, \EMAIL{zhi.chen@cuhk.edu.hk}}
\AUTHOR{
Bradley Sturt}\AFF{Department of Information and Decision Sciences, University of Illinois  Chicago, \EMAIL{bsturt@uic.edu}}
\AUTHOR{Weijun Xie}
\AFF{H. Milton Stewart School of Industrial and Systems Engineering, Georgia Institute of Technology, \EMAIL{wxie@gatech.edu}}
}

\ABSTRACT{We consider a feature-based personalized pricing problem in which the buyer is \emph{strategic}: given the seller's pricing policy, the buyer can augment the features that they reveal to the seller to obtain a low price for the product. We model the seller's pricing problem as a stochastic program over an infinite-dimensional space of pricing policies where the radii by which the buyer can perturb the features are strictly positive. We establish that the sample average approximation of this problem is \emph{asymptotically consistent}; that is, we prove that the objective value of the sample average approximation converges almost surely to the objective value of the stochastic problem as the number of samples tends to infinity under mild technical assumptions. This consistency guarantee thus shows that incorporating  strategic consumer behavior into a data-driven pricing problem can, in addition to making the pricing problem more realistic, also  help prevent overfitting.\looseness=-1 }
\looseness=-1 

\KEYWORDS{sample average approximation, stochastic programming, data-driven pricing.}

\HISTORY{\today}

\maketitle

\section{Introduction}
Personalized pricing is a problem in revenue management in which a seller is tasked with selecting a price for a single product to offer to a buyer. The buyer's valuation of the product is not directly observed by the seller, but the seller observes features of the buyer (e.g., the buyer's age, credit score, purchase timing) that can be used to predict the buyer's valuation. The goal of the seller is to find a pricing policy that specifies what price to offer to the buyer as a function of the buyer's features to maximize the seller's expected revenue. We model the  personalized pricing problem  as a stochastic program of the form
\begin{equation} \label{prob:trad}
 \begin{aligned}
 &\underset{\pi: \mathcal{X} \to \mathcal{P}}{\text{maximize}} && \Exp\left[ \pi(\bbx) \cdot \mathbb{I} \left \{\pi(\bbx) \le V \right \} \right], 
 \end{aligned}
\end{equation}
where $\pi(\cdot)$ is the pricing policy, $\mathcal{X} \subseteq \R^D$ is the feasible set for the buyer's $D$ features, $\mathcal{P} \subseteq \R$ is the set of feasible prices that can be offered by the seller, and the buyer is represented by the random vector $(\bbx,V)$ consisting of the features $\bbx \in \mathcal{X}$ and product valuation $V \in \mathcal{V} \subseteq \R$. For an introduction to the class of stochastic programs~\eqref{prob:trad}, see  \cite{elmachtoub2021value} and references therein.\looseness=-1

A classical technique in the field of operations research for estimating the objective value of a stochastic program is the \emph{sample average approximation} \citep{shapiro2009lectures}.
This technique consists of two steps: first, we use historical data or simulation to obtain independent samples of the random variables; second, we approximate the stochastic program by replacing the true probability distribution with the empirical probability distribution constructed from the samples. The sample average approximation in application to the personalized pricing problem~\eqref{prob:trad} is denoted by 
\begin{equation} \label{prob:trad_saa}
 \begin{aligned}
 &\underset{\pi: \mathcal{X} \to \mathcal{P}}{\text{maximize}} && \frac{1}{N} \sum_{i=1}^N \pi(\bbx^i) \cdot \mathbb{I} \left \{\pi(\bbx^i) \le V^i \right \},
 \end{aligned}
\end{equation}
where $(\bbx^1,V^1),\ldots,(\bbx^N,V^N)$ are independent samples  drawn from the same  distribution as $ (\bbx,V)$. The objective value of \eqref{prob:trad_saa} is the sample average approximation's estimate of the objective value of \eqref{prob:trad}.\looseness=-1

Unfortunately, if the buyer's features are drawn from a continuous probability distribution, then the objective value of \eqref{prob:trad_saa}  can yield a highly inaccurate estimate of the objective value of \eqref{prob:trad}, {even in the asymptotic regime in which the number of samples tends to infinity} (see, e.g., \citealt{xie2022limitations}).  We illustrate this  by the   following example:
\begin{example} \label{ex:intro}
Suppose that $D = 1$, $\mathcal{P} = \mathcal{X} = \mathcal{V} = \R$, and that $X,V \in \R$ are {independent} random variables that are each uniformly distributed over $[0,1]$. The fact that $X$ and $V$ are independent implies that there exists an optimal pricing policy for \eqref{prob:trad} that offers a constant price for all features; hence,  $\eqref{prob:trad} = \max_{p} \Exp [ p \mathbb{I} \left \{ p \le V \right \}] = \max_{p} \int_{p}^1 p dv= \max_p \{ p - p^2 \} = \sfrac{1}{4}$. However, it follows from the fact that  $X^i \neq X^j$ for all $1 \le i <j \le N$, almost surely,  that there exists an optimal pricing policy for \eqref{prob:trad_saa} that satisfies $\pi(X^i) = V^i$ for all $1 \le i \le N$; hence, $\eqref{prob:trad_saa} = \frac{1}{N} \sum_{i=1}^N V^i \xrightarrow{N \to \infty} \sfrac{1}{2}$ almost surely. 
\end{example}
As illustrated in Example~\ref{ex:intro}, the key issue with the sample average approximation is that \eqref{prob:trad_saa} is optimizing over an {infinite-dimensional} space of pricing policies, which leads \eqref{prob:trad_saa} to output a pricing policy that overfits the samples.

In this note, we study a variant of \eqref{prob:trad} in which the buyer is \emph{strategic}: given the seller's pricing policy, the buyer can augment the features that are revealed to the seller to obtain a low price for the product. 
The radii by which the buyer can perturb their features are positive random variables that are jointly distributed with the buyer's features and valuation.  
Such incorporation of strategic behavior with multi-dimensional features has received significant attention in recent years as an accurate approximation of how customers make decisions (\citealt{tsirtsis2024optimal}), and our model for incorporating strategic behavior into \eqref{prob:trad} can be viewed as a multi-dimensional generalization of intertemporal price discrimination in which customers are strategic in timing their purchases \citep{besbes2015intertemporal}.\looseness=-1

Our main contribution of this note is establishing a \emph{phase transition} for the sample average approximation between the non-strategic and strategic versions of personalized pricing. Specifically, for the strategic version of \eqref{prob:trad}, we prove under mild assumptions that the objective value of the corresponding version of \eqref{prob:trad_saa} is \emph{asymptotically consistent}; that is, the objective value of the sample average approximation converges almost surely to the objective value of the stochastic problem as the number of samples tends to infinity. The asymptotic consistency guarantee holds when optimizing over the infinite-dimensional space of all pricing policies. 
Our result thus shows that accounting for strategic consumer behavior not only makes the personalized pricing problem~\eqref{prob:trad} more realistic: it also 
helps prevent \eqref{prob:trad_saa} from outputting pricing policies that overfit the samples. More generally, our result shows the potential for using strategic consumer behavior as a tool for augmenting the sample average approximation in order to obtain asymptotic consistency guarantees in stochastic programs with infinite-dimensional policy spaces. \looseness=-1

\paragraph{Organization.} In \S\ref{sec:main_result}, we formalize the strategic version of personalized pricing and state our main result, Theorem~\ref{thm:main}. In \S\ref{sec:proof:thm:main}, we present the proof. In \S\ref{sec:numerics}, we illustrate the phase transition implied by Theorem~\ref{thm:main} through a numerical experiment. In \S\ref{sec:intermediary_lemmas}, we present technical details that were omitted from \S\ref{sec:proof:thm:main}. Throughout the paper, random variables are represented by uppercase letters (e.g., $\bbl$) or by lowercase letters with a hat (e.g., $\hat{\nu}_N$). Uppercase letters can also represent a constant or function. Curly upper case letters (e.g., $\mathcal{F},\mathfrak{A}$) represent sets.  Lowercase letters represent a realization of a random variable or a number. A variable is bold if and only if it is a vector.\looseness=-1

\paragraph{Related Literature.} Our consistency result contributes to the literature on data-driven stochastic programming over infinite-dimensional policy spaces.  For the non-strategic version of \eqref{prob:trad} in which the buyer has a one-dimensional feature (i.e, the setting where $D=1$), \cite{xie2022limitations} propose nonparametric ways of imposing restrictions on the infinite-dimensional space of policies in~\eqref{prob:trad_saa} in order to obtain asymptotic consistency guarantees. In recent years, other papers have provided nonparametric techniques for preventing overfitting in stochastic programs with infinite-dimensional policy spaces based on incorporating {adversarial noise} into the sample average approximation \citep{xu2012distributional,shafieezadeh2019regularization,sturt2023nonparametric} and applying kernel smoothing to the samples in the sample average approximation \citep{pflug2016empirical}. We contribute to the aforementioned literature by showing that incorporating {strategic consumer behavior} into the sample average approximation can prevent overfitting in stochastic programs with infinite-dimensional policy spaces.

\section{Problem Setting and Main Result} \label{sec:main_result}
\subsection{Problem Setting}
We consider a strategic variant of the personalized pricing problem defined by a set of allowable prices, $\mathcal{P} \subseteq \R$, and a joint probability distribution for the buyer. The buyer drawn from that joint probability distribution is represented by a random vector $(\bbl,\bbu,V) \in \mathcal{X} \times \mathcal{X} \times \mathcal{V}$ whose support is contained in $\mathcal{F} \times \mathcal{V}$ where $\mathcal{F}\triangleq \left\{(\bell,\bu) \in \mathcal{X} \times \mathcal{X}: \bell \le \bu \right \}$. The random variable $V \in \mathcal{V}$ denotes the buyer's valuation, and the random vectors $\bbl \equiv (L_1,\ldots,L_D) \in \mathcal{X}$ and $\bbu \equiv (U_1,\ldots,U_D) \in \mathcal{X}$ define the random hyperrectangle $[\bbl,\bbu] \equiv \times_{d=1}^D [L_d,U_d]$ from which the buyer chooses the feature vector $\bx \in [\bbl,\bbu]$ that is revealed to the seller.\footnote{The random hyperrectangle $[\bbl,\bbu] \subseteq \R^D$ can be equivalently interpreted as the hyperrectangle $[\bbx - \bbe, \bbx + \bbe]$ where $\bbx \triangleq \frac{1}{2}(\bbl + \bbu)$ is the buyer's random feature vector and $\bbe \triangleq \frac{1}{2} (\bbu - \bbl)$ is the random radii by which the buyer can perturb each of their features.} 

A pricing policy refers to a measurable function $\pi: \mathcal{X} \to \mathcal{P}$ that specifies the price to offer to the buyer based on their revealed feature vector. In other words, if the buyer $(\bbl,\bbu,V)$ reveals $\bx \in [\bbl,\bbu]$ as their feature vector, then $\pi$ will offer the buyer a price of $\pi(\bx) \in \mathcal{P}$. 
Given a pricing policy $\pi$ and a buyer realization $(\bell,\bu,v) \in \mathcal{F} \times \mathcal{V}$, the revenue received by the seller is
\begin{align*}
R^\pi(\bell,\bu,v) &\triangleq \mathbb{I} \left \{\inf_{\bx \in [\bell,\bu]} \pi (\bx) \le v \right \} \cdot \inf_{\bx \in [\bell,\bu]} \pi (\bx ).
\end{align*}
The zero-one indicator $\mathbb{I} \left \{\inf_{\bx \in [\bell,\bu]} \pi (\bx ) \le v \right \}$ implies that the buyer realization $(\bell,\bu,v)$ will purchase the product if and only if there exists a feature vector $\bx \in [\bell,\bu]$ that the buyer can reveal to the seller that generates a price $\pi(\bx)$ that is less than or equal to the buyer's valuation $v$. If the buyer purchases the product, then the buyer will maximize their utility by revealing the feature vector $\bx \in [\bell,\bu]$ that leads to the minimum price. The goal of the seller is to choose a pricing policy that maximizes the seller's expected revenue, which is captured by the following stochastic program:\looseness=-1
\begin{align}
\nu^* \triangleq \sup_{\pi: \mathcal{X} \to \mathcal{P}} \left \{ J^*(\pi) \triangleq \Exp \left[R^\pi \left(\bbl,\bbu,V\right) \right] \right \}. 
 \tag{OPT} \label{prob:opt}
\end{align}

\subsection{Main Result}
Our main result establishes the asymptotic consistency of the objective value of the sample average approximation with respect to the objective value $\nu^*$ of \eqref{prob:opt}. We make the following  assumptions:\looseness=-1
\begin{assumption} \label{ass:iid}
$(\bbl,\bbu,V),(\bbl^1,\bbu^1,V^1),(\bbl^2,\bbu^2,V^2),\ldots$ is a sequence of independent and identically distributed 
random vectors with support contained in $\mathcal{F} \times \mathcal{V}$. 
\end{assumption}
\begin{assumption} \label{ass:bounded}
$\mathcal{X} = [0,1]^D$, $\mathcal{V} = [0,1]$, and $\mathcal{P} \subseteq [0,1]$  is closed and nonempty.
\end{assumption}
\begin{assumption} \label{ass:continuous}
$(\bbl,\bbu) \in \R^{2D}$ is a continuous random vector with probability   density function $g: \R^{2D} \to [0,\infty)$.\looseness=-1  
\end{assumption}
Assumption~\ref{ass:iid} is a standard assumption in the stochastic programming literature and allows the random variables $\bbl,\bbu,V$ to be correlated with one another. 
Assumption~\ref{ass:bounded} is a boundedness assumption on the feature vector and product valuations, which is imposed to simplify our proofs, and a regularity assumption on the set of  prices $\mathcal{P}$. Similarly as \cite{xie2022limitations}, 
 our main result can be extended to the case of features and  valuations with unbounded supports under mild probabilistic assumptions using standard truncation arguments. We also note that the regularity assumption on $\mathcal{P}$ is satisfied if there exist prices $p_1 < \cdots < p_K$ such that $\mathcal{P} = \{p_1,\ldots,p_K\}$. Assumption~\ref{ass:continuous} says that $(\bbl,\bbu)$ is drawn from a continuous probability distribution with respect to $\R^{2D}$, and thus implies that $\Prb((\bbl,\bbu) \in \mathcal{Z}) = 0$ for any measurable set $\mathcal{Z} \subseteq \R^{2D}$ with zero Lebesgue measure. We note that Assumption~\ref{ass:continuous} is satisfied if $\bbl = \bbx - \bepsilon$ and $\bbu = \bbx + \bepsilon$ for fixed radii $\bepsilon > \bzero$ and a random vector $\bbx \in \R^{D}$ that satisfies $\Prb(\bbx \in [\bepsilon, \mathbf{1} - \bepsilon]) = 1$ and is absolutely continuous with respect to the Lebesgue measure on $\R^{D}$. We show in Appendix~\ref{sec:discussion:continuous} that  Assumption~\ref{ass:continuous} is a necessary condition for asymptotic consistency, in the sense that relaxing this assumption can invalidate our main result.

Our main result  is the following:
\begin{theorem} \label{thm:main}
Suppose Assumptions~\ref{ass:iid}, \ref{ass:bounded}, and \ref{ass:continuous} hold, and let 
\begin{align}
 \widehat{\nu}_N \triangleq \sup_{\pi: \mathcal{X} \to \mathcal{P}} \left \{ \widehat{J}_N(\pi) \triangleq \frac{1}{N} \sum \limits_{i=1}^N R^{\pi}\left(\bbl^i,\bbu^i,V^i \right) \right \} \tag{SAA} \label{prob:saa}
\end{align}
for each integer $N > 0$. Then $\lim\limits_{N \to \infty} \widehat{\nu}_N = \nu^*$ almost surely. 
\end{theorem}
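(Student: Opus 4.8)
The plan is to establish $\liminf_{N\to\infty}\widehat{\nu}_N\ge\nu^*$ and $\limsup_{N\to\infty}\widehat{\nu}_N\le\nu^*$ separately, both almost surely. The lower bound is routine: choosing a countable sequence of policies $\pi_1,\pi_2,\dots$ with $J^*(\pi_k)\to\nu^*$, the summands $R^{\pi_k}(\bbl^i,\bbu^i,V^i)$ are i.i.d.\ and take values in $[0,1]$ (Assumption~\ref{ass:bounded}), so the strong law of large numbers gives $\widehat{J}_N(\pi_k)\to J^*(\pi_k)$ almost surely for each $k$; intersecting these countably many probability-one events yields $\liminf_N\widehat{\nu}_N\ge\sup_k J^*(\pi_k)=\nu^*$ almost surely.

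The work is in the upper bound. The starting point is that $R^\pi$ depends on $\pi$ only through $q^\pi(\bell,\bu)\triangleq\inf_{\bx\in[\bell,\bu]}\pi(\bx)$, namely $R^\pi(\bell,\bu,v)=q^\pi(\bell,\bu)\,\mathbb{I}\{q^\pi(\bell,\bu)\le v\}$, and that each $q^\pi$ is $[0,1]$-valued and monotone with respect to box containment (equivalently, after flipping the sign of the $\bu$-coordinates, a uniformly bounded coordinatewise-monotone function on a compact subset of $\R^{2D}$). Fix a resolution $1/m$ and partition $\mathcal{X}=[0,1]^D$ into cubes of side $1/m$; to a policy $\pi$ associate $\pi_\downarrow$, the policy that is constant on each cube and equals the infimum of $\pi$ over that cube. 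Since $\mathcal{P}$ is closed (Assumption~\ref{ass:bounded}) these infima lie in $\mathcal{P}$, so $\pi_\downarrow$ belongs to the family $\Pi_m$ of $\mathcal{P}$-valued policies constant on the grid, a family that is finite when $\mathcal{P}$ is finite (and is otherwise finite-dimensional). Because $\pi_\downarrow\le\pi$ pointwise we have $q^{\pi_\downarrow}\le q^\pi$, and a three-line case check gives the pointwise bound
\[
 R^\pi(\bell,\bu,v)\ \le\ R^{\pi_\downarrow}(\bell,\bu,v)\ +\ \bigl(q^\pi(\bell,\bu)-q^{\pi_\downarrow}(\bell,\bu)\bigr).
\]
Averaging over the sample, taking suprema over $\pi$, and using $\pi_\downarrow\in\Pi_m$,
\[
 \widehat{\nu}_N\ \le\ \max_{\bar\pi\in\Pi_m}\widehat{J}_N(\bar\pi)\ +\ \sup_\pi\ \frac1N\sum_{i=1}^N\bigl(q^\pi(\bbl^i,\bbu^i)-q^{\pi_\downarrow}(\bbl^i,\bbu^i)\bigr).
\]

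The first term converges almost surely to $\max_{\bar\pi\in\Pi_m}J^*(\bar\pi)\le\nu^*$ by the strong law over the simple family $\Pi_m$. For the second, I would bound $q^\pi-q^{\pi_\downarrow}$ pointwise by the ``inflation gap'' $\Delta_m^\pi(\bell,\bu)\triangleq q^\pi(\bell,\bu)-q^\pi\bigl((\bell-\tfrac1m\bone)\vee\bzero,\ (\bu+\tfrac1m\bone)\wedge\bone\bigr)$, observe that $\Delta_m^\pi$ is a difference of two functions each lying in the monotone-under-containment class, and invoke Assumption~\ref{ass:continuous}: since it makes the law of $(\bbl,\bbu)$ absolutely continuous, the class of uniformly bounded monotone functions is Glivenko--Cantelli for this law (finite $L_1$ bracketing numbers for bounded monotone functions on a cube, transferred to the law of $(\bbl,\bbu)$ by truncating the density $g$). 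Hence $\sup_\pi\frac1N\sum_i\Delta_m^\pi(\bbl^i,\bbu^i)$ converges almost surely to at most $\sup_\pi\Exp\bigl[\Delta_m^\pi(\bbl,\bbu)\bigr]$, and combining the two terms and sending $m\to\infty$ gives $\limsup_N\widehat{\nu}_N\le\nu^*$ provided
\[
 \lim_{m\to\infty}\ \sup_{\pi}\ \Exp\bigl[\Delta_m^\pi(\bbl,\bbu)\bigr]\ =\ \lim_{m\to\infty}\ \sup_{S\subseteq[0,1]^D}\ \Prb\bigl([\bbl,\bbu]\subseteq S,\ [\bbl-\tfrac1m\bone,\bbu+\tfrac1m\bone]\not\subseteq S\bigr)\ =\ 0 .
\]
This last estimate is the crux and the step I expect to be the main obstacle. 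Intuitively it is where strategic behavior with genuinely random perturbation radii does its work: it says that, up to a vanishing error, the seller cannot profit from letting $\pi$ oscillate on a scale finer than the perturbation radii, so the effective complexity of the policy space is that of monotone functions rather than arbitrary functions. Its proof relies on the absolute continuity of $(\bbl,\bbu)$ --- the displayed event forces a rectangle to lie inside $S$ yet press within $1/m$ of $\partial S$, and a uniform modulus-of-continuity (concentration-function) argument for the marginals shows this has probability tending to $0$ uniformly over $S$ --- and it is precisely this step that breaks when Assumption~\ref{ass:continuous} is weakened, consistently with the necessity discussion the paper defers to the appendix.
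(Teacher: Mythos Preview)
Your skeleton matches the paper's: round $\pi$ down to a cube-constant policy $\pi_\downarrow$ (the paper's $T_S\circ\pi$) and control $\widehat{J}_N(\pi)-\widehat{J}_N(\pi_\downarrow)$ uniformly over $\pi$. The execution differs in two substantive ways. First, to pass from sample averages to expectations uniformly in $\pi$, you invoke Glivenko--Cantelli for bounded coordinatewise-monotone functions on $[0,1]^{2D}$; the paper instead discretizes $\mathcal{P}$ to $K$ prices (its Proposition~\ref{prop:finiteprices}) so that the grid family is literally finite and the uniform LLN is immediate, and then works bucket-by-bucket in $(\bbl,\bbu)$-space. Second --- and this is the main contrast --- the paper proves the analogue of your crux estimate (its Proposition~\ref{prop:pi}) through a combinatorial count: partitioning $\mathcal{F}$ into $S^{2D}$ buckets, it introduces $\bdelta$-faces and $\bdelta$-lines and shows (Proposition~\ref{prop:bound_A_S_delta_omega}, Lemmas~\ref{lem:num_anchors}--\ref{lem:not_too_close}) that along each $\bdelta$-line only $O(S/M)$ buckets can be ``bad'', whence the bad-bucket fraction is $o(1)$ uniformly in $\pi$, and then uses absolute continuity (Lemma~\ref{lem:proportion_bucket}) to turn a vanishing fraction into vanishing probability. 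Your crux estimate is correct and can indeed be proved more directly than the paper does: telescope the inflation into $2D$ single-coordinate steps; at the step that inflates, say, $U_d$ by $1/m$, the event $\{B^{(j-1)}\subseteq S,\,B^{(j)}\not\subseteq S\}$ forces $U_d$ to lie in an interval of length at most $1/m$ once the other $2D-1$ coordinates are fixed (because $t\mapsto\mathbb{I}\{[\cdot,(\cdot,t)]\subseteq S\}$ is monotone with a single jump), so the event has Lebesgue measure at most $1/m$ in $\R^{2D}$ uniformly over $S$; then finish via absolute continuity exactly as in the paper's Claim~\ref{claim:second_inequality:probability:intermediary}. So your ``concentration-function for the marginals'' intuition is right, though it is really a Fubini/section argument rather than a statement about unconditional marginals. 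The trade-off: your route replaces the paper's bespoke $\bdelta$-line combinatorics and the price-discretization step with an off-the-shelf but nontrivial bracketing fact (finite $L_1$ bracketing for multivariate monotone functions) plus a short telescoping; the paper's route is fully elementary and self-contained but longer. One loose end on your side is the uniform LLN over the ``finite-dimensional'' family $\Pi_m$ when $\mathcal{P}$ is a continuum --- $R^\pi$ is not continuous in $\pi$, so compactness alone does not give it; the cleanest fix is to graft on the paper's Proposition~\ref{prop:finiteprices} at that point.
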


\section{Proof of Theorem~\ref{thm:main}}\label{sec:proof:thm:main}
We assume throughout this section that Assumptions~\ref{ass:iid}--\ref{ass:continuous} hold. We additionally assume  that:\looseness=-1
\begin{assumption} \label{ass:discrete}
There exist $0 < p_1 < \cdots < p_K \le 1$ such that $\mathcal{P} = \{p_1,\ldots,p_K \}$.
\end{assumption}
This assumption is unnecessary for establishing Theorem~\ref{thm:main} but simplifies the presentation of its proof. 
 In \S\ref{sec:proof:remove_ass}, we prove that Theorem~\ref{thm:main} holds in the absence of Assumption~\ref{ass:discrete}. 

It follows from \citet[Equation (5.6)]{shapiro2009lectures} that \eqref{prob:saa} is an asymptotically {optimistic} estimate of \eqref{prob:opt}, in the sense that $\liminf_{N \to \infty} \widehat{\nu}_N \ge \nu^*$ 
 almost surely.  It remains to prove the other direction of that inequality. To this end, we introduce a grid-based partitioning of the feature space. Specifically, for each integer $S > 0$, we consider partitioning $[0,1]^D$ into hypercubes of width $\sfrac{1}{S}$, where the hypercubes are indexed by vectors $\bsigma \equiv (\sigma_1,\ldots,\sigma_D) \in \{1,\ldots,S\}^D$. The set of feature vectors in the hypercube corresponding to $\bsigma$ is denoted by 
$\mathcal{H}_S(\bsigma) \triangleq \times_{d=1}^D \mathcal{H}_{S,d}(\sigma_d)$
where 
\begin{align*}
\mathcal{H}_{S,d}(\sigma_d) \triangleq \begin{cases}
\left[\frac{\sigma_d-1}{S}, \frac{\sigma_d}{S} \right) &\text{if } d \in \{1,\ldots,D-1\} \\
\left[\frac{\sigma_d-1}{S}, \frac{\sigma_d}{S} \right] &\text{if } d = D. 
\end{cases}
\end{align*}
We readily observe that $\{ \mathcal{H}_S(\bsigma): \bsigma \in \{1,\ldots,S\}^D \}$ is a collection of disjoint sets whose union is equal to $[0,1]^D$. This implies that every feature vector $\bx \in [0,1]^D$ is contained in exactly one hypercube. 
For notational convenience, we denote the indices of the hypercube that contains feature vector $\bx \in [0,1]^D$ by $\bsigma_S(\bx) \equiv (\sigma_{S,1}(x_1),\ldots,\sigma_{S,D}(x_D))\in \{1,\ldots,S\}^D$; that is, $\bsigma_S(\bx)$ is defined as the unique vector of indices that satisfies $\bx \in \mathcal{H}_S(\bsigma_S(\bx))$. 

Let the set of all pricing policies that are piecewise constant over the grid-based partitioning of $[0,1]^D$ into hypercubes of width $\sfrac{1}{S}$ be denoted by
\begin{align*}
\Pi_{S}\triangleq \left \{\pi \in \Pi \; \middle \vert \; 
 \text{if } \bsigma \in \{1,\ldots,S \}^D \text{ and } 
 \bx,\bx' \in \mathcal{H}_S(\bsigma), \text{ then } \pi(\bx) = \pi(\bx') \right \}.
\end{align*}
It follows from the above definitions that the restricted space of pricing policies $\Pi_S$ has a finite number of elements. More specifically, we observe from Assumption~\ref{ass:discrete} that for each integer $S > 0$, the restricted space of pricing policies has a cardinality of $|\Pi_S| = K^{S^D}$. It follows from this observation that the objective value of the sample average approximation is a consistent estimator of the objective value of the stochastic problem when optimizing over the restricted space $\Pi_S$, in the sense that the following line holds for every integer $S > 0$:\footnote{More precisely, \eqref{line:discrete:uniform} follows from Assumption~\ref{ass:iid}, from the fact that $|\Pi_S| < \infty$, and from the fact that each $\pi \in \Pi_S$ is measurable with respect to the Lebesgue measure over $[0,1]^D$, which implies the function $(\bell,\bu,v) \in \mathcal{F} \times [0,1] \mapsto R^\pi(\bell,\bu,v) \in [0,1]$ is integrable \citep[Proposition 5.2 and Theorem 7.48]{shapiro2009lectures}.}
 \begin{align}
 \lim \limits_{N \to \infty} \sup \limits_{\pi \in \Pi_S} \widehat{J}_N(\pi)= \sup \limits_{\pi \in \Pi_S} J^*(\pi) \quad \text{almost surely}.\label{line:discrete:uniform}
\end{align}

The remainder of the proof of Theorem~\ref{thm:main} consists of showing for every sufficiently large integer $S > 0$ and for every pricing policy $\pi: [0,1]^D \to \mathcal{P}$ that there exists a pricing policy in the restricted space $\Pi_S$ yielding  approximately the same objective value with respect to the sample average approximation. 
To show this, let $\Pi \triangleq \{\pi: [0,1]^D \to \mathcal{P} \}$ denote the space of all measurable pricing policies, and let $T_S: \Pi \to \Pi_S$ be an operator defined as 
\begin{align*}
 (T_S \circ \pi)(\bx) \triangleq \min_{\bx' \in \mathcal{H}_S(\bsigma_S(\bx))}\pi(\bx') \quad \forall \pi \in \Pi, \bx \in [0,1]^D.
\end{align*} 
The pricing policy $T_S \circ \pi$ can be interpreted as a `rounded' version of pricing policy $\pi \in \Pi$ that, for each $\bsigma \in \{1,\ldots,S\}^D$, offers the constant price $\min_{\bx' \in \mathcal{H}_S(\bsigma)} \pi(\bx')$ to all feature vectors in the hypercube $\mathcal{H}_S(\bsigma)$. 
The following relates the asymptotic performance of $T_S \circ \pi$ relative to $\pi$.
\begin{proposition}
\label{prop:main}
 $ \limsup \limits_{S \to \infty} \limsup \limits_{N \to \infty} \sup \limits_{\pi \in \Pi} \left \{ \widehat{J}_N\left(\pi \right) - \widehat{J}_N\left(T_S \circ\pi \right) \right \} = 0 $ almost surely.
\end{proposition}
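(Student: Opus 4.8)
I will show that replacing any policy $\pi$ by its grid‑rounded version $T_S\circ\pi$ costs almost no SAA revenue, by (i) turning the revenue gap into a sum of per‑sample losses, (ii) reducing the worst‑case $\pi$ to a policy whose super‑level sets are unions of sample hyperrectangles, and (iii) using that, as $N\to\infty$, the sample hyperrectangles become ``dense'' and therefore prevent the worst case from placing a low price next to many hyperrectangles at a fixed grid resolution.

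\textbf{Step 1 (reduction to a per‑sample loss).} Since $\bx\in\mathcal H_S(\bsigma_S(\bx))$ we have $T_S\circ\pi\le\pi$ pointwise, and for any buyer $(\bell,\bu,v)$ the realized price under $T_S\circ\pi$ equals $\inf_{\bx\in[\bell,\bu]}(T_S\circ\pi)(\bx)=\inf_{[\bell,\bu]^\uparrow}\pi$, where $[\bell,\bu]^\uparrow\triangleq\bigcup\{\mathcal H_S(\bsigma):\mathcal H_S(\bsigma)\cap[\bell,\bu]\ne\emptyset\}$ is the smallest union of grid cells containing $[\bell,\bu]$, so that $[\bell,\bu]\subseteq[\bell,\bu]^\uparrow\subseteq[\bell,\bu]+[-\tfrac1S,\tfrac1S]^D$. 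Writing $p^*\triangleq\inf_{[\bell,\bu]}\pi\ge p^{**}\triangleq\inf_{[\bell,\bu]^\uparrow}\pi$, one checks $R^\pi(\bell,\bu,v)-R^{T_S\circ\pi}(\bell,\bu,v)\le(p^*-p^{**})\,\mathbb{I}\{p^*\le v\}$ (if the buyer purchases under $\pi$ it purchases under $T_S\circ\pi$ at a weakly smaller price; otherwise $R^\pi=0\le R^{T_S\circ\pi}$). Averaging over samples and telescoping over $\mathcal P=\{p_1<\dots<p_K\}$ with $p_0\triangleq0$, using $p^*-p^{**}=\sum_k(p_k-p_{k-1})\mathbb{I}\{p^{**}<p_k\le p^*\}$ and the inclusion $\{p^{**}<p_k\le p^*\le v\}\subseteq\{[\bell,\bu]\subseteq A_k\}\cap\{[\bell,\bu]^\uparrow\not\subseteq A_k\}\cap\{v\ge p_k\}$ for $A_k\triangleq\{\bx:\pi(\bx)\ge p_k\}$, and finally dropping the nesting constraint on the $A_k$'s, the claim reduces to showing, for each fixed $p\in\mathcal P$, that $\limsup_{S\to\infty}\limsup_{N\to\infty}\ G^{S}_{N}(p)=0$ almost surely, where $G^{S}_{N}(p)\triangleq\sup_{A\subseteq[0,1]^D\text{ meas.}}\tfrac1N\sum_{i=1}^N\mathbb{I}\{V^i\ge p\}\bigl(\mathbb{I}\{[\bbl^i,\bbu^i]\subseteq A\}-\mathbb{I}\{[\bbl^i,\bbu^i]^\uparrow\subseteq A\}\bigr)$.

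\textbf{Step 2 (reduction to unions of sample boxes, and a witness‑cell count).} For any measurable $A$, set $I(A)\triangleq\{i\le N:V^i\ge p,\ [\bbl^i,\bbu^i]\subseteq A\}$ and $W_I\triangleq\bigcup_{i\in I}[\bbl^i,\bbu^i]$; then $W_{I(A)}\subseteq A$ and the $i$‑th summand is nonzero only for $i\in I(A)$, where it equals $\mathbb{I}\{[\bbl^i,\bbu^i]^\uparrow\not\subseteq A\}\le\mathbb{I}\{[\bbl^i,\bbu^i]^\uparrow\not\subseteq W_{I(A)}\}$, so $G^S_N(p)\le\sup_{I\subseteq\{i\le N:V^i\ge p\}}\tfrac1N|\mathcal B(I)|$ with $\mathcal B(I)\triangleq\{i:V^i\ge p,\ [\bbl^i,\bbu^i]\subseteq W_I,\ [\bbl^i,\bbu^i]^\uparrow\not\subseteq W_I\}$. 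For $i\in\mathcal B(I)$ pick a grid cell $c_i$ meeting $[\bbl^i,\bbu^i]$ with $c_i\not\subseteq W_I$, and for each realized witness cell $c$ fix a witness point $\bz_c\in c\setminus W_I$; then $\bz_{c_i}\notin[\bbl^i,\bbu^i]$ while $\dist_\infty(\bz_{c_i},[\bbl^i,\bbu^i])\le\tfrac1S$, and every $j$ with $V^j\ge p$ and $\bz_{c_i}\in[\bbl^j,\bbu^j]$ has $[\bbl^j,\bbu^j]\not\subseteq W_I$, hence $j\notin\mathcal B(I)$. Grouping $\mathcal B(I)$ by witness cell and noting that, within each witness cell $c$, the set $\mathcal B(I)\cap\{c_i=c\}$ and the set $\{j:V^j\ge p,\ \bz_c\in[\bbl^j,\bbu^j]\}$ are disjoint subsets of $\{j:V^j\ge p,\ [\bbl^j,\bbu^j]\cap c\ne\emptyset\}$, we obtain $|\mathcal B(I)|\le\sum_{c}\bigl(|\{j:V^j\ge p,\ [\bbl^j,\bbu^j]\cap c\ne\emptyset\}|-\inf_{\bz\in c}|\{j:V^j\ge p,\ \bz\in[\bbl^j,\bbu^j]\}|\bigr)$, the sum over the at most $S^D$ witness cells.

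\textbf{Step 3 (the density argument — the crux), and the obstacle.} It remains to control the last display after $\limsup_N$ then $\limsup_S$, uniformly over the adversary's choice of the $\le S^D$ cells and witness points. By a uniform law of large numbers over the finite‑VC family $\{(\bell,\bu):\bz\in[\bell,\bu]\}$ (the same device underlying \eqref{line:discrete:uniform}), each empirical count above converges uniformly in $\bz$ and $c$ to its probability, so $\limsup_N\tfrac1N|\mathcal B(I)|$ is, up to this $o_N(1)$ error, at most the sum of the $S^D$ largest values of $\rho_c-\mu_c$, where $\rho_c\triangleq\Prb(V\ge p,[\bbl,\bbu]\cap c\ne\emptyset)$ and $\mu_c\triangleq\min_{\bz\in\overline c}\phi_p(\bz)$ with $\phi_p(\bz)\triangleq\Prb(\bz\in[\bbl,\bbu],\ V\ge p)$. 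The mechanism for decay in $S$ is that as $N\to\infty$ the hyperrectangles with $V^i\ge p$ fill out the open reachable set $\{\phi_p>0\}$: one records that $\phi_p$ is continuous (dominated convergence, using Assumption~\ref{ass:continuous} so that any single $L_d$ or $U_d$ hits a given level with probability zero), proves a covering lemma (a.s., every compact subset of $\{\phi_p>0\}$ lies in $\bigcup_{i\le N:V^i\ge p}[\bbl^i,\bbu^i]$ for all large $N$), and observes by Fubini that a.s.\ no hyperrectangle with $V^i\ge p$ enters $\interior\{\phi_p=0\}$; these force every witness point to lie within $\dist_\infty\le\tfrac1S$ of $\{\phi_p=0\}$ and hence every counted hyperrectangle to lie within $\dist_\infty\le\tfrac2S$ of $\{\phi_p=0\}$ while missing its interior, so the limit is bounded by $\Prb\bigl(\dist_\infty([\bbl,\bbu],\{\phi_p=0\})\le\tfrac2S,\ [\bbl,\bbu]\cap\interior\{\phi_p=0\}=\emptyset,\ V\ge p\bigr)$, which tends to $0$ as $S\to\infty$ because $\{\phi_p=0\}$ is closed and, by the Fubini remark, $\Prb([\bbl,\bbu]\cap\interior\{\phi_p=0\}\ne\emptyset,\ V\ge p)=0$. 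Summing the finitely many price levels closes the proof, together with the trivial reverse inequality $\ge0$. The main obstacle is precisely Step 3: making the ``sample hyperrectangles become dense'' statement quantitative \emph{and} uniform over the data‑dependent adversarial set $A$ — which is why one must route through the VC/Glivenko–Cantelli machinery and the geometric bookkeeping of witness cells and $1/S$‑shells rather than a pointwise strong law — and in carefully ruling out the degenerate configurations allowed by the hypotheses (edge lengths arbitrarily small, $\interior\{\phi_p=0\}$ possibly nonempty); these measure‑theoretic ingredients are exactly the content of the lemmas deferred to \S\ref{sec:intermediary_lemmas}.
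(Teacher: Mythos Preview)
Your Steps 1 and 2 are correct and set up a clean reduction: controlling $G^S_N(p)=\sup_A\frac1N\sum_i\mathbb{I}\{V^i\ge p\}\bigl(\mathbb{I}\{[\bbl^i,\bbu^i]\subseteq A\}-\mathbb{I}\{[\bbl^i,\bbu^i]^\uparrow\subseteq A\}\bigr)$ is indeed equivalent to the proposition, and passing to $W_I$ with the witness-cell bookkeeping is valid. The gap is entirely in Step~3, and it is a real one, not just missing details.

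First, the quantitative bound you actually derive in Step~2 is too weak. After the uniform LLN you obtain $\limsup_N\frac1N\sup_I|\mathcal B(I)|\le\sum_{c}(\rho_c-\mu_c)$, the sum running over all $S^D$ grid cells (since every term is nonnegative and the adversary chooses the witness cells). But $\rho_c-\mu_c$ equals $\Prb\bigl([\bbl,\bbu]\cap c\ne\emptyset,\ \bz_c^\star\notin[\bbl,\bbu],\ V\ge p\bigr)$, which is at least the probability that $\partial[\bbl,\bbu]$ hits $c$ in a specific way. Already for $D=1$ with $[\bbl,\bbu]=[X-\epsilon,X+\epsilon]$, $X$ uniform, one computes $\rho_c-\mu_c\approx\frac{1/S}{1-2\epsilon}$ for interior cells, so $\sum_c(\rho_c-\mu_c)\approx\frac{1}{1-2\epsilon}$, a constant in $S$; for $D\ge 2$ the sum is $\Theta(S^{D-1})$. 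So this bound does not decay.

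Second, the density argument you invoke to rescue Step~3 does not hold. You assert that the covering lemma and the Fubini observation ``force every witness point to lie within $\dist_\infty\le\frac1S$ of $\{\phi_p=0\}$.'' But the witness point $\bz_c$ lies in $c\setminus W_I$, and $W_I$ is an \emph{adversarially chosen} sub-union of the sample hyperrectangles, not the full union $\bigcup_{j:V^j\ge p}[\bbl^j,\bbu^j]$. The covering lemma says only that compact subsets of $\{\phi_p>0\}$ are eventually contained in the \emph{full} union; it says nothing about $W_I$. Concretely, in the $D=1$ example above (where $\{\phi_p=0\}$ is essentially empty), take $I=\{i:X^i\in[0.3,0.5]\}$ so that $W_I=[0.2,0.6]$; then any sample with $X^i\in[0.5-\tfrac1S,0.5]$ lies in $\mathcal B(I)$ and has its witness point near $0.6$, nowhere near $\{\phi_p=0\}$. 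The final probability bound you write therefore does not follow. Your closing claim that ``these measure-theoretic ingredients are exactly the content of the lemmas deferred to \S\ref{sec:intermediary_lemmas}'' is also incorrect: none of those lemmas concern~$\phi_p$ or a covering/density statement.

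The paper's proof takes a genuinely different route that sidesteps this obstacle. Rather than working in feature space $[0,1]^D$, it partitions \emph{buyer space} $\{(\bell,\bu):\bell\le\bu\}$ into $S^{2D}$ buckets $(\blambda,\bmu)$ according to which grid cells contain $\bell$ and $\bu$. The key step (Lemma~\ref{lem:characterization_customers}) shows that rounding can hurt a bucket only if the minimum of $\pi$ over the \emph{interior} of $\textsc{Rect}(\blambda,\bmu)$ strictly exceeds its minimum over the \emph{boundary}. The crux is then a purely combinatorial count (Proposition~\ref{prop:bound_A_S_delta_omega}): along each ``$\bdelta$-line'' $(\blambda,\bmu)+\iota(\bdelta,\bdelta)$ in buyer space, two consecutive bad buckets at the same price level must be at least $M$ steps apart, because the $\bdelta$-face of one rectangle slides into the interior of the next (Lemma~\ref{lem:not_too_close}). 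Summing over $O(DS^{2D-1})$ lines, $K$ price levels, and $3^D-1$ directions, and balancing against the $O(MDS^{2D-1})$ thin buckets via $M=\lceil\sqrt S\rceil$, yields $|\mathcal A^\pi_S|/|\mathcal B_S|=O(1/\sqrt S)$ uniformly in $\pi$; Assumption~\ref{ass:continuous} then converts vanishing bucket \emph{fraction} into vanishing bucket \emph{probability} (Lemma~\ref{lem:proportion_bucket}). This buyer-space counting is exactly what your feature-space witness-cell argument is missing: it provides the uniform-in-$\pi$ control over the adversarial set $A$ that your Step~3 attempts, but cannot obtain, from the density of sample hyperrectangles alone.
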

The proof of Proposition~\ref{prop:main} is found in \S\ref{sec:proof:prop:main}. Combining Proposition~\ref{prop:main} and \eqref{line:discrete:uniform}, we have
\begin{align*}
\limsup_{N \to \infty} \widehat{\nu}_N &= \limsup_{N \to \infty} \sup_{\pi \in \Pi} \widehat{J}_N(\pi) = \limsup_{S \to \infty} \limsup_{N \to \infty} \sup_{\pi \in \Pi} \left \{ \widehat{J}_N(\pi) - \widehat{J}_N(T_S \circ \pi) + \widehat{J}_N(T_S \circ \pi) \right \} \\
&\le \limsup_{S \to \infty} \limsup_{N \to \infty} \sup_{\pi \in \Pi} \left \{ \widehat{J}_N(\pi) - \widehat{J}_N(T_S \circ \pi) \right \} + \limsup_{S \to \infty} \limsup_{N \to \infty} \sup_{\pi \in \Pi_S} \widehat{J}_N(\pi) \\
&= \limsup_{S \to \infty} \lim_{N \to \infty} \sup_{\pi \in \Pi_S} \widehat{J}_N(\pi) \quad \text{almost surely} \\
&= \limsup_{S \to \infty} \sup \limits_{\pi \in \Pi_S} J^*(\pi) \quad \text{almost surely} \\
&\le \nu^*.
\end{align*}
The first and second equalities and the first inequality follow from algebra. The third equality follows from Proposition~\ref{prop:main}. The fourth equality follows from \eqref{line:discrete:uniform}. The final inequality follows from the fact that $\Pi_S \subseteq \Pi$ for all $S > 0$. Combining the above with the fact that $\liminf_{N \to \infty} \widehat{\nu}_N \ge \nu^*$ almost surely completes the proof of Theorem~\ref{thm:main}. 

\subsection{Proof of Proposition~\ref{prop:main}} \label{sec:proof:prop:main}
Given any integer $S > 0$, we define a bucket as any tuple of the form $(\blambda, \bmu) \in \mathcal{B}_S$ where $$
 \mathcal{B}_S \triangleq \left\{(\blambda,\bmu) \in \{1,\ldots,S\}^D \times \{1,\ldots,S\}^D: \blambda \le \bmu \right\}.$$
We say henceforth that a buyer realization $(\bell,\bu,v) \in \mathcal{F} \times [0,1]$ is mapped to bucket $(\blambda,\bmu) \in \mathcal{B}_S$ if and only if $ \blambda = \bsigma_S(\bell)$ and $\bmu = \bsigma_S(\bu)$. That is,  buyer realization $(\bell,\bu,v)$ is mapped to bucket $(\blambda,\bmu)$ if and only if $\bell$ is an element of the hypercube $\mathcal{H}_S(\blambda)$ and $\bu$ is an element of the hypercube $\mathcal{H}_S(\bmu)$. It follows that every buyer realization is mapped to exactly one bucket.\looseness=-1

Let the buyer realizations that are mapped to bucket $(\blambda,\bmu)$ be denoted by 
$$\mathcal{C}_S(\blambda,\bmu) \triangleq \left \{ (\bell,\bu,v) \in \mathcal{F} \times [0,1]: \blambda = \bsigma_S(\bell), \bmu = \bsigma_S(\bu)\right \}.$$ 
For all integers $S,N >0$ and buckets $(\blambda,\bmu) \in \mathcal{B}_S$, let the subset of the $N$ random buyers that are mapped to bucket $(\blambda,\bmu)$ be denoted by $$\widehat{\mathcal{N}}_{S,N}(\blambda,\bmu) \triangleq \{i \in \{1,\ldots,N\}: (\bbl^i,\bbu^i,V^i) \in \mathcal{C}_S(\blambda,\bmu)\}. $$
For each pricing policy $\pi \in \Pi$, let the objective value of the sample average approximation corresponding to $\pi$ for the random buyers that are mapped to  bucket $(\blambda,\bmu)$ be denoted by
\begin{align*}
\widehat{J}_{S,N}(\pi;\blambda,\bmu) \triangleq \frac{1}{ | \widehat{\mathcal{N}}_{S,N}(\blambda,\bmu) |} \sum_{i \in \widehat{\mathcal{N}}_{S,N}(\blambda,\bmu)} R^\pi \left(\bbl^i,\bbu^i,V^i \right), 
\end{align*}
where we use the convention that $\sfrac{1}{0} \times 0 = 0$. Finally, let the probability that a random buyer is mapped to bucket $(\blambda,\bmu)$ be denoted by 
$p_{S}(\blambda,\bmu) \triangleq \Prb \left( \bsigma_S(\bbl) = \blambda , \bsigma_S(\bbu) = \bmu \right).$

It follows from the above notations that 
\begin{align}
&\limsup_{S \to \infty} \limsup_{N \to \infty} \sup_{\pi \in \Pi} \left \{ \widehat{J}_N\left(\pi \right) - \widehat{J}_N\left(T_S \circ\pi \right) \right \} \notag \\
= & \limsup_{S \to \infty} \limsup_{N \to \infty} \sup_{\pi \in \Pi} \left \{ \sum_{(\blambda,\bmu) \in \mathcal{B}_S} \frac{ | \widehat{\mathcal{N}}_{S,N}(\blambda,\bmu) |}{N} \left(\widehat{J}_{S,N}(\pi; \blambda,\bmu) - \widehat{J}_{S,N}(T_S \circ \pi; \blambda,\bmu) \right) \right \} \notag \\
\le & \limsup_{S \to \infty} \limsup_{N \to \infty} \sup_{\pi \in \Pi} \left \{ \sum_{(\blambda,\bmu) \in \mathcal{B}_S} \frac{ | \widehat{\mathcal{N}}_{S,N}(\blambda,\bmu) |}{N} \sup_{(\bell,\bu,v) \in \mathcal{C}_S(\blambda,\bmu)}\left \{R^\pi(\bell,\bu,v) - R^{T_S \circ \pi}(\bell,\bu,v) \right \}\right \} \notag \\
= & \limsup_{S \to \infty} \sup_{\pi \in \Pi} \left \{ \sum_{(\blambda,\bmu) \in \mathcal{B}_S} p_S(\blambda,\bmu) \sup_{(\bell,\bu,v) \in \mathcal{C}_S(\blambda,\bmu)}\left \{R^\pi(\bell,\bu,v) - R^{T_S \circ \pi}(\bell,\bu,v) \right \} \right \} \quad \text{almost surely}, \label{line:lln_applied}
\end{align}
where the first equality and the first inequality follow from algebra and the second equality follows from the uniform law of large numbers.\footnote{The uniform law of large numbers \citep[Theorem 7.48]{shapiro2009lectures} says for every $S > 0$ and $\delta > 0$, there exists a random variable $\bar{N}$ satisfying $\bar{N} < \infty$ almost surely such that for all $N \ge \bar{N}$ and all $(\blambda,\bmu) \in \mathcal{B}_S$, we have $ p_S(\blambda,\bmu) - \delta\leq | \widehat{\mathcal{N}}_{S,N}(\blambda,\bmu) |/ N \le p_S(\blambda,\bmu) + \delta $. Hence, line~\eqref{line:lln_applied} follows from choosing $\delta > 0$ to be arbitrarily small. }

The remainder 
of the proof of Proposition~\ref{prop:main} consists of showing, roughly speaking, that the inner-most supremum in line~\eqref{line:lln_applied} is nonpositive for \emph{most} buckets. More precisely, for each integer $S > 0$ and pricing policy $\pi \in \Pi$, let the buckets for which the inner-most supremum is positive be denoted by\looseness=-1
\begin{align*}
 \mathcal{A}^\pi_{S} \triangleq \left \{ (\blambda, \bmu) \in \mathcal{B}_S: \sup_{(\bell,\bu,v) \in \mathcal{C}_S(\blambda,\bmu)}\left \{R^\pi(\bell,\bu,v) - R^{T_S \circ \pi}(\bell,\bu,v) \right \} > 0 \right \}. 
\end{align*}
The following proposition shows that the probability of a random buyer mapping to a bucket in $\mathcal{A}^\pi_S$ converges to zero as $S$ tends to infinity, uniformly over all pricing policies. 
\begin{proposition}\label{prop:pi}
 $\limsup\limits_{S \to \infty} \sup\limits_{\pi \in \Pi} \sum\limits_{(\blambda,\bmu) \in \mathcal{A}^\pi_S} p_S(\blambda,\bmu) = 0$. 
\end{proposition}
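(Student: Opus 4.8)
The plan is to convert the event ``the random buyer's bucket lies in $\mathcal{A}^\pi_S$'' into a statement about how the price the seller attains changes when the buyer's feature box is dilated or eroded by $1/S$, and then to bound the probability of such a change uniformly over $\pi\in\Pi$ by exploiting that this attained price is a monotone function of the box taking values in the finite set $\mathcal{P}=\{p_1,\dots,p_K\}$ (recall Assumption~\ref{ass:discrete} is in force). It is convenient to write $\mathcal{G}_S(\blambda,\bmu)\triangleq\bigcup_{\blambda\le\bsigma\le\bmu}\mathcal{H}_S(\bsigma)$ for the union of all hypercubes that a box in bucket $(\blambda,\bmu)$ can meet, and $\mathcal{T}_S(\blambda,\bmu)\triangleq\times_{d=1}^D[\lambda_d/S,(\mu_d-1)/S]$ for the ``inner tube'' of the bucket (nonempty precisely when $\lambda_d<\mu_d$ for every $d$).

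\textit{Step 1: reduction to a price jump.} Using $T_S\circ\pi\le\pi$ pointwise together with $\inf_{\bx\in[\bell',\bu']}(T_S\circ\pi)(\bx)=\inf_{\bx\in\mathcal{G}_S(\blambda,\bmu)}\pi(\bx)$ for any box $[\bell',\bu']$ in bucket $(\blambda,\bmu)$, a buyer realization can have $R^\pi-R^{T_S\circ\pi}>0$ only when the purchase indicator stays at $1$, i.e. only when $\inf_{\bx\in[\bell',\bu']}\pi(\bx)>\inf_{\bx\in\mathcal{G}_S(\blambda,\bmu)}\pi(\bx)$. Hence a bucket in $\mathcal{A}^\pi_S$ admits a witness box with this strict gap, and since $\mathcal{T}_S(\blambda,\bmu)\subseteq[\bell',\bu']$ for \emph{every} box in the bucket, this forces $\inf_{\bx\in\mathcal{T}_S(\blambda,\bmu)}\pi(\bx)>\inf_{\bx\in\mathcal{G}_S(\blambda,\bmu)}\pi(\bx)$ at the bucket level. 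I then transfer this gap to the actual realization $(\bbl,\bbu)$ of the bucket via the sandwich $[\bbl,\bbu]^{-1/S}\subseteq\mathcal{T}_S(\blambda,\bmu)\subseteq[\bbl,\bbu]\subseteq\mathcal{G}_S(\blambda,\bmu)\subseteq[\bbl,\bbu]^{+1/S}$, where $[\bbl,\bbu]^{+1/S}$ is the box dilated by $1/S$ per coordinate and clipped to $[0,1]^D$, and $[\bbl,\bbu]^{-1/S}=\times_d[L_d+1/S,U_d-1/S]$ is the eroded box (empty when some $U_d-L_d<2/S$, with the convention $\inf_{\emptyset}\pi=+\infty$). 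Comparing infima of $\pi$ along the sandwich, the bucket-level gap forces at least one of
\[
\inf_{\bx\in[\bbl,\bbu]^{+1/S}}\pi(\bx)<\inf_{\bx\in[\bbl,\bbu]}\pi(\bx)\qquad\text{or}\qquad\inf_{\bx\in[\bbl,\bbu]^{-1/S}}\pi(\bx)>\inf_{\bx\in[\bbl,\bbu]}\pi(\bx),
\]
the degenerate case (some $\lambda_d=\mu_d$) automatically landing in the second alternative because the eroded box is then empty. Consequently $\sum_{(\blambda,\bmu)\in\mathcal{A}^\pi_S}p_S(\blambda,\bmu)=\Prb\big((\bsigma_S(\bbl),\bsigma_S(\bbu))\in\mathcal{A}^\pi_S\big)\le\Prb(\mathcal{E}^+_S)+\Prb(\mathcal{E}^-_S)$, with $\mathcal{E}^\pm_S$ the two displayed events.

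\textit{Step 2: uniform bound on the jump probabilities.} For fixed $\pi$ the map $\phi_\pi(\bell,\bu)\triangleq\inf_{\bx\in[\bell,\bu]\cap[0,1]^D}\pi(\bx)$ is nondecreasing in $\bell$, nonincreasing in $\bu$, and valued in $\mathcal{P}$, so every superlevel set $A_k\triangleq\{(\bell,\bu):\phi_\pi(\bell,\bu)\ge p_k\}$ is upward closed after the reflection $\bu\mapsto-\bu$. Splitting over the $K-1$ thresholds, $\mathcal{E}^+_S\subseteq\bigcup_{k\ge2}\{(\bbl,\bbu)\in A_k,\ (\bbl-\tfrac1S\bone,\bbu+\tfrac1S\bone)\notin A_k\}$, and $\mathcal{E}^-_S$ decomposes analogously with the shift reversed, together with the empty-eroded-box event. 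It therefore suffices to bound, uniformly over upward closed $\Gamma\subseteq\R^{2D}$, the probability that $(\bbl,-\bbu)$ lands in the ``boundary shell'' $\Gamma\setminus(\Gamma+\tfrac1S\bone)$. The crux is that this shell has small Lebesgue measure: since $\Gamma+\varepsilon\bone\subseteq\Gamma$, for any fixed cube $B$ containing the (bounded) support one has $\mathrm{Leb}\big((\Gamma\setminus(\Gamma+\varepsilon\bone))\cap B\big)=\mathrm{Leb}(\Gamma\cap B)-\mathrm{Leb}\big(\Gamma\cap(B-\varepsilon\bone)\big)\le\mathrm{Leb}\big(B\setminus(B-\varepsilon\bone)\big)\le C_D\,\varepsilon$ with $C_D$ depending only on $D$. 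Taking $\varepsilon=1/S$, the shells of the $A_k$ have Lebesgue measure $\le C_D/S$; since $(\bbl,\bbu)$ has a density $g\in L^1(\R^{2D})$ by Assumption~\ref{ass:continuous}, absolute continuity of the Lebesgue integral supplies, for each $\epsilon>0$, an $S$-threshold depending only on $g$ and $\epsilon$ (and \emph{not} on $\Gamma$, hence not on $\pi$) beyond which each such probability is $<\epsilon$. The empty-eroded-box contribution is at most $\Prb(\exists d:\ U_d-L_d<2/S)$, which decreases to $\Prb(\exists d:\ U_d=L_d)=0$ because $\{(\bell,\bu):\exists d,\ \ell_d=u_d\}$ is Lebesgue-null in $\R^{2D}$ and $(\bbl,\bbu)$ has a density. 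Combining, $\sup_{\pi\in\Pi}[\Prb(\mathcal{E}^+_S)+\Prb(\mathcal{E}^-_S)]\le 2(K-1)\epsilon+\Prb(\exists d:\ U_d-L_d<2/S)$ for all large $S$; sending $S\to\infty$ and then $\epsilon\downarrow0$ gives the claim.

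\textit{Anticipated main obstacle.} The delicate part is Step 1, in particular the fact that the witness box realizing $R^\pi>R^{T_S\circ\pi}$ need not be the sampled box $(\bbl^i,\bbu^i)$ --- it can be \emph{any} box lying in the same bucket. The argument must therefore route all boxes of a bucket through the common inner tube $\mathcal{T}_S(\blambda,\bmu)$ and outer hull $\mathcal{G}_S(\blambda,\bmu)$, extract the bucket-level strict gap, and only then pull the conclusion back to the realization through the sandwich. Verifying the inclusions exactly, given the half-open hypercube faces, the clipping to $[0,1]^D$, and degenerate buckets, along with the (universal) measurability of $\phi_\pi$ and of the sets $A_k$, are the routine-but-nontrivial items I would defer to \S\ref{sec:intermediary_lemmas}.
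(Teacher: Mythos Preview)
Your argument is correct and takes a genuinely different route from the paper's. The paper proceeds combinatorially: after establishing (Lemma~\ref{lem:characterization_customers}) that $(\blambda,\bmu)\in\mathcal{A}^\pi_S$ forces the minimum of $\pi$ over the interior hypercubes of $\textsc{Rect}(\blambda,\bmu)$ to strictly exceed the minimum over some boundary face, it bounds the \emph{number} of such buckets. Buckets are organized along ``$\bdelta$-lines'' emanating from ``$\bdelta$-anchors'' (Lemmas~\ref{lem:num_anchors}--\ref{lem:unique_line}), and Lemma~\ref{lem:not_too_close} shows that two bad buckets on the same line at the same price level must be at least $M$ steps apart, yielding $|\mathcal{A}^\pi_{S,M,\bdelta}|\le 2DKS^{2D}/M$ (Proposition~\ref{prop:bound_A_S_delta_omega}). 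Optimizing over $M$ gives $|\mathcal{A}^\pi_S|/|\mathcal{B}_S|\le\beta_S\to 0$ uniformly in $\pi$, after which Lemma~\ref{lem:proportion_bucket} converts the vanishing proportion into vanishing probability via absolute continuity.

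Your approach is analytic and bypasses bucket counting entirely. By routing every box in a bucket through the common inner tube $\mathcal{T}_S(\blambda,\bmu)$ and outer hull $\mathcal{G}_S(\blambda,\bmu)$, you show that whenever the realized $(\bbl,\bbu)$ falls in a bad bucket it must lie in the shell $\tilde A_k\setminus(\tilde A_k\pm\tfrac1S\bone)$ of one of $K-1$ superlevel sets of the monotone function $\phi_\pi$. Because each $\tilde A_k$ is an upper set in $\R^{2D}$, its shell inside any fixed cube has Lebesgue measure $O(1/S)$ \emph{uniformly} over all upper sets (hence over $\pi$), and absolute continuity of $\int g$ finishes the job. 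This is shorter and sidesteps the anchors/lines machinery; on the other hand, the paper's counting gives an explicit rate $\beta_S=O(S^{-1/2})$ on the proportion of bad buckets, whereas your probability bound is only as good as the modulus of absolute continuity of $g$. Your flagged measurability concern is benign: after the reflection $\bu\mapsto-\bu$, each $\tilde A_k$ is an upper set, and upper sets in $\R^{2D}$ are Lebesgue measurable (Fubini along the diagonal shows the boundary meets each line in at most one point), so both the probabilities and the Lebesgue measures you invoke are well defined regardless of how irregular $\pi$ is.
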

The proof of Proposition~\ref{prop:pi} is found in \S\ref{sec:proof:prop:pi}. Equipped with the above proposition, we observe that
\begin{align*}
 \eqref{line:lln_applied} &\le \limsup_{S \to \infty} \sup_{\pi \in \Pi} \left \{ \sum_{(\blambda,\bmu) \in \mathcal{B}_S \setminus \mathcal{A}_S^\pi} p_S(\blambda,\bmu) \sup_{(\bell,\bu,v) \in \mathcal{C}_S(\blambda,\bmu)}\left \{R^\pi(\bell,\bu,v) - R^{T_S \circ \pi}(\bell,\bu,v) \right \} \right \}\\
 &\quad + \limsup_{S \to \infty} \sup_{\pi \in \Pi} \left \{ \sum_{(\blambda,\bmu) \in \mathcal{A}_S^\pi} p_S(\blambda,\bmu) \sup_{(\bell,\bu,v) \in \mathcal{C}_S(\blambda,\bmu)}\left \{R^\pi(\bell,\bu,v) - R^{T_S \circ \pi}(\bell,\bu,v) \right \} \right \}\\
 &\le 0 + \limsup_{S \to \infty} \sup_{\pi \in \Pi} \left \{ \sum_{(\blambda,\bmu) \in \mathcal{A}_S^\pi} p_S(\blambda,\bmu) \cdot 1 \right \}=0,
\end{align*} 
where the first inequality follows from algebra, the second inequality follows from the fact that $\sup_{(\bell,\bu,v) \in \mathcal{C}_S(\blambda,\bmu)}\left \{R^\pi(\bell,\bu,v) - R^{T_S \circ \pi}(\bell,\bu,v) \right \} \le 0$ for all $(\blambda,\bmu) \in \mathcal{B}_S \setminus \mathcal{A}^\pi_S$ and from the fact that $R^\pi(\bell,\bu,v) - R^{T_S \circ \pi}(\bell,\bu,v)\le 1$ for all $(\bell,\bu,v) \in \mathcal{F} \times [0,1]$, and the equality follows from Proposition~\ref{prop:pi}. That concludes the proof of Proposition~\ref{prop:main}.

\subsection{Proof of Proposition~\ref{prop:pi}} \label{sec:proof:prop:pi}
Consider any arbitrary $S > 1$ and pricing policy $\pi \in \Pi$. 

We begin by introducing a necessary condition (Lemma~\ref{lem:characterization_customers}) that must be satisfied by every bucket that is contained in $\mathcal{A}_S^\pi$. To state this necessary condition, we use the following terminology. For each bucket $(\blambda,\bmu) \in \mathcal{B}_S$, we let the hyperrectangle formed by the set of hypercubes $\bsigma \in \{1,\ldots,S\}^D$ that satisfy $\blambda \le \bsigma \le \bmu$ be denoted by $\textsc{Rect}(\blambda,\bmu) \triangleq \times_{d=1}^D \left \{\lambda_d ,\ldots,\mu_d \right \}$. 
Moreover, for each bucket $(\blambda,\bmu) \in \mathcal{F}_{S}$ and each vector $\bdelta \in \{-1,0,1\}^D$, we define the $\bdelta$-{face} of $\textsc{Rect}(\blambda,\bmu)$ as 
\begin{align*}
\textsc{Face}(\blambda,\bmu,\bdelta) \triangleq \left \{ \bsigma: \; \begin{aligned}
\sigma_d &= \lambda_d && \text{ for all } d \in \{1,\ldots,D\} \text{ such that } \delta_d = -1 \\ 
\sigma_d &= \mu_d && \text{ for all } d \in \{1,\ldots,D\} \text{ such that } \delta_d = 1 \\ 
\sigma_d &\in \{\lambda_d + 1,\ldots,\mu_d - 1\} &&\text{ for all } d \in \{1,\ldots,D\} \text{ such that } \delta_d = 0 
\end{aligned}\right \}.
\end{align*}
It is easy to verify for each bucket $(\blambda,\bmu) \in \mathcal{B}_S$ that $\{\textsc{Face}(\blambda,\bmu,\bdelta): \bdelta \in \{-1,0,1\}^D\}$ is a collection of sets whose union is equal to $\textsc{Rect}(\blambda,\bmu)$. In Figures~\ref{fig:geometry}a and \ref{fig:geometry}b, we visualize hyperrectangles and $\bdelta$-faces. We now present our necessary condition for a bucket to be an element of $\mathcal{A}_S^\pi$.\looseness=-1
\begin{figure}[t]
\centering
\FIGURE{
\begin{minipage}{\linewidth}
\centering
\subfloat[{}]{
\includegraphics[width=0.38\textwidth]{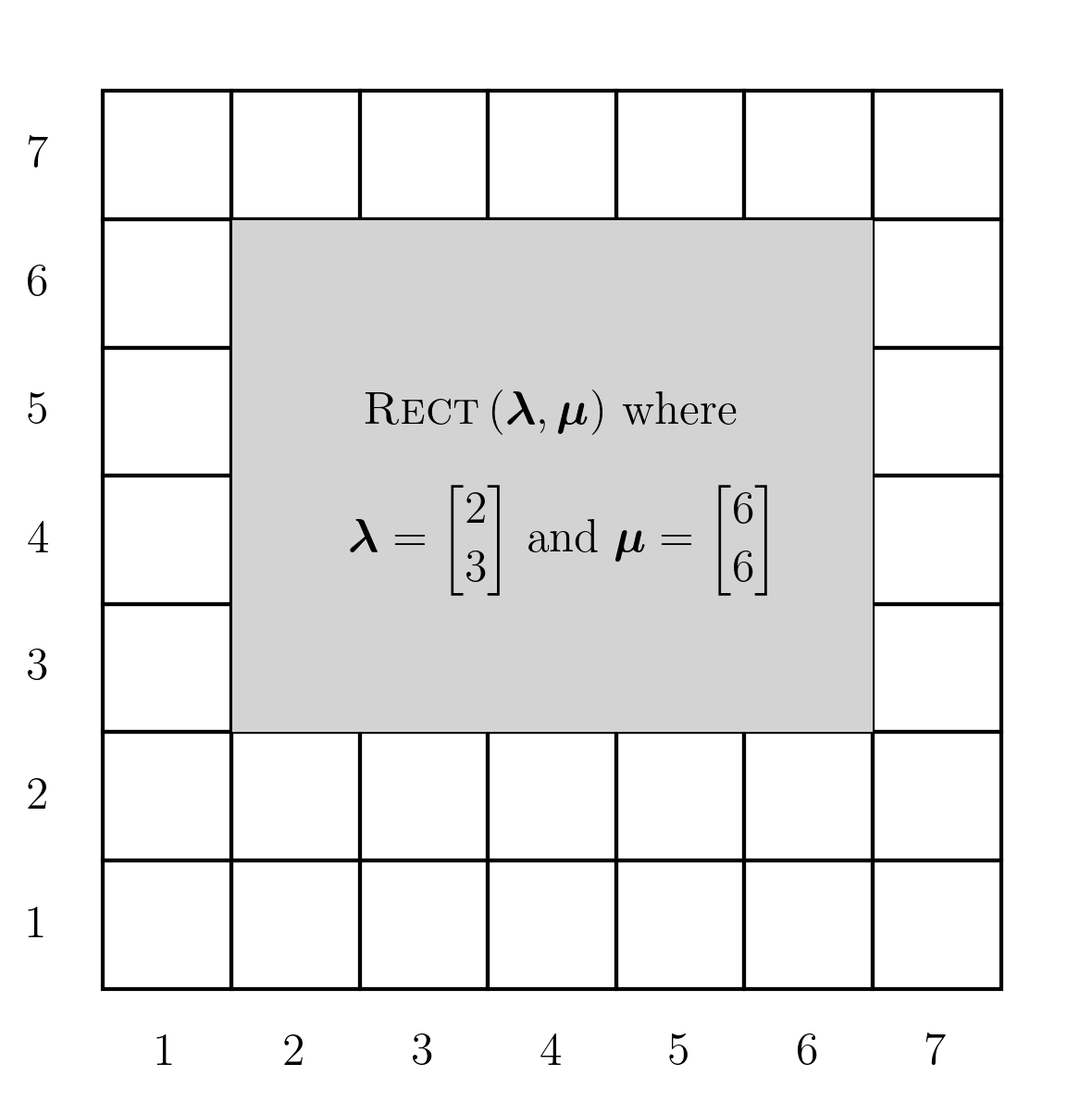}
} \quad
\subfloat[]{
\includegraphics[width=0.435\textwidth]{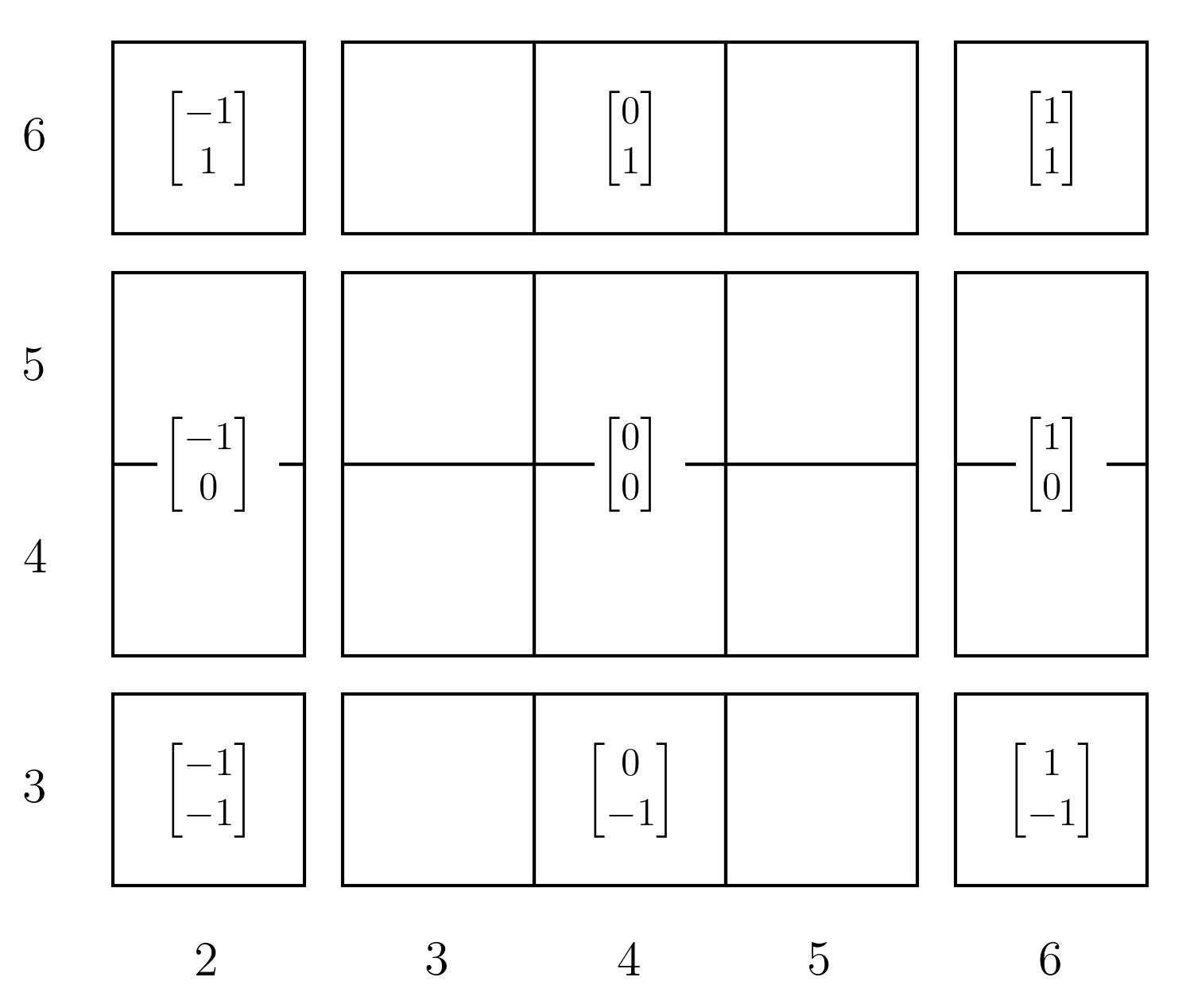}
}
\end{minipage}
}
{Visualizations from \S\ref{sec:proof:prop:pi}\\[-2em]\label{fig:geometry}} 
{ \emph{(a)} Visualization of grid-based partitioning of the feature space $[0,1]^2$ into hypercubes of width $\frac{1}{7}$. \emph{(b)} Visualization of $\textsc{Face}\left((2,3),\left(6,6\right),\bdelta \right)$ for each $\bdelta \in \{-1,0,1\}^2$. Note that the union of the $\bdelta$-faces in (\emph{b}) is equal to the hyperrectangle $\textsc{Rect}((2,3),(6,6))$ which is shown shaded in (\emph{a}).} 
\end{figure}
\begin{lemma} \label{lem:characterization_customers}
If $(\blambda,\bmu) \in \mathcal{A}_{S}^\pi$, then $\min \limits_{\bsigma \in \textsc{Face}(\blambda,\bmu,\bzero)} \min \limits_{\bx \in \mathcal{H}_S(\bsigma)} \pi(\bx) > \min \limits_{\bdelta \in \{-1,0,1\}^D: \bdelta \neq \bzero} \min \limits_{\bsigma \in \textsc{Face}(\blambda,\bmu,\bdelta)} \min \limits_{\bx \in \mathcal{H}_S(\bsigma)} \pi(\bx). $
\end{lemma}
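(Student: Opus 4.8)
The plan is to prove the contrapositive: I will show that if the stated inequality *fails*, i.e.\ if
$$
\min_{\bsigma \in \textsc{Face}(\blambda,\bmu,\bzero)} \min_{\bx \in \mathcal{H}_S(\bsigma)} \pi(\bx) \; \le \; \min_{\bdelta \neq \bzero} \min_{\bsigma \in \textsc{Face}(\blambda,\bmu,\bdelta)} \min_{\bx \in \mathcal{H}_S(\bsigma)} \pi(\bx),
$$
then $(\blambda,\bmu) \notin \mathcal{A}_S^\pi$; that is, $R^\pi(\bell,\bu,v) \le R^{T_S\circ\pi}(\bell,\bu,v)$ for every $(\bell,\bu,v) \in \mathcal{C}_S(\blambda,\bmu)$. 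Fix such a buyer realization. The two revenues $R^\pi$ and $R^{T_S\circ\pi}$ are each of the form $\I\{q \le v\}\cdot q$ where $q$ is the relevant infimum price, so the comparison reduces to understanding the two quantities $q_\pi \triangleq \inf_{\bx \in [\bell,\bu]} \pi(\bx)$ and $q_{T} \triangleq \inf_{\bx \in [\bell,\bu]} (T_S\circ\pi)(\bx)$. Since $x \mapsto \I\{x \le v\}\cdot x$ is nondecreasing on $[0,\infty)$ for every fixed $v$ (revenue only goes up as the offered price rises, until it exceeds $v$ and drops to $0$), it suffices to show $q_\pi \ge q_T$, because then $R^\pi = \I\{q_\pi \le v\} q_\pi \le \I\{q_T \le v\} q_T = R^{T_S\circ\pi}$ — wait, that inequality goes the wrong way, so more care is needed: in fact we want $R^\pi \le R^{T_S\circ\pi}$, and the correct sufficient condition is $q_\pi \le q_T$ together with the observation that whenever $q_\pi \le v$ we also have $q_T \le v$; equivalently it suffices to show $q_\pi \le q_T$ and that $q_T \le v$ whenever $q_\pi \le v$. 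I would handle this by showing directly that $q_\pi = q_T$ under the assumed inequality, which resolves both issues at once.

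To show $q_\pi = q_T$: first, $q_T \le q_\pi$ always holds, since $(T_S\circ\pi)(\bx) = \min_{\bx'\in\mathcal{H}_S(\bsigma_S(\bx))}\pi(\bx') \le \pi(\bx)$ pointwise. For the reverse, I would identify exactly which hypercubes the revealed feature $\bx \in [\bell,\bu]$ can land in. Since $\bell \in \mathcal{H}_S(\blambda)$ and $\bu \in \mathcal{H}_S(\bmu)$ and $[\bell,\bu]$ is a product of intervals, the set of hypercube-indices $\bsigma$ with $\mathcal{H}_S(\bsigma) \cap [\bell,\bu] \neq \emptyset$ is contained in $\textsc{Rect}(\blambda,\bmu)$; hence $q_T = \inf_{\bx\in[\bell,\bu]}(T_S\circ\pi)(\bx) = \min_{\bsigma \in \textsc{Rect}(\blambda,\bmu):\, \mathcal{H}_S(\bsigma)\cap[\bell,\bu]\neq\emptyset} \; \min_{\bx'\in\mathcal{H}_S(\bsigma)}\pi(\bx')$. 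Now the key geometric point: because $\bell$ lies in the \emph{lowest} corner hypercube $\mathcal{H}_S(\blambda)$ and $\bu$ in the \emph{highest} corner hypercube $\mathcal{H}_S(\bmu)$, the hyperrectangle $[\bell,\bu]$ reaches into every hypercube on every face $\textsc{Face}(\blambda,\bmu,\bdelta)$ with $\bdelta\neq\bzero$ — each such face consists of hypercubes that are at a coordinate-extreme in at least one direction, and $[\bell,\bu]$ touches those extremes. Consequently $q_T \le \min_{\bdelta\neq\bzero}\min_{\bsigma\in\textsc{Face}(\blambda,\bmu,\bdelta)}\min_{\bx'\in\mathcal{H}_S(\bsigma)}\pi(\bx')$. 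Combined with the assumed inequality, this last quantity is $\le \min_{\bsigma\in\textsc{Face}(\blambda,\bmu,\bzero)}\min_{\bx'}\pi(\bx')$ as well, so in fact $q_T \le \min_{\bsigma\in\textsc{Rect}(\blambda,\bmu)}\min_{\bx'\in\mathcal{H}_S(\bsigma)}\pi(\bx') \le q_\pi$, giving $q_T = q_\pi$ and completing the argument.

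The main obstacle I anticipate is the geometric claim that $[\bell,\bu]$ intersects $\mathcal{H}_S(\bsigma)$ for every $\bsigma$ on a nonzero face — in particular I need to be careful about the half-open versus closed convention in the definition of $\mathcal{H}_{S,d}$, and about the case where $\bell$ or $\bu$ lies exactly on a grid hyperplane (which is where Assumption~\ref{ass:continuous} is implicitly doing work elsewhere, though here the statement is deterministic so I must handle boundary points directly or argue that $q_T$ as an infimum is unaffected by them). A clean way to dispatch this is: for a fixed $\bdelta\neq\bzero$ and $\bsigma\in\textsc{Face}(\blambda,\bmu,\bdelta)$, construct an explicit point $\bx^\star\in[\bell,\bu]\cap\mathcal{H}_S(\bsigma)$ coordinate by coordinate — in directions $d$ with $\delta_d=-1$ take $x^\star_d=\ell_d$ (which lies in $\mathcal{H}_{S,d}(\lambda_d)$ by definition of $\blambda=\bsigma_S(\bell)$), in directions with $\delta_d=1$ take $x^\star_d=u_d$, and in directions with $\delta_d=0$ take any interior point of $\mathcal{H}_{S,d}(\sigma_d)$, which is nonempty and lies in $(\ell_d,u_d)$ since $\lambda_d < \sigma_d < \mu_d$ — and verify $\bx^\star\in[\bell,\bu]$ and $\bx^\star\in\mathcal{H}_S(\bsigma)$. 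I'd present Figures~\ref{fig:geometry}a–b alongside this to make the corner-reaching intuition transparent.
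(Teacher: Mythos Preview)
Your contrapositive strategy and the reduction to showing $q_\pi = q_T$ are clean, but the inequality chain in your second paragraph runs backward. After correctly noting $q_T \le q_\pi$ always, you set out to prove ``the reverse'' $q_\pi \le q_T$; however, what you actually write is
\[
q_T \;\le\; \min_{\bdelta\neq\bzero}\min_{\bsigma\in\textsc{Face}(\blambda,\bmu,\bdelta)}\min_{\bx'\in\mathcal{H}_S(\bsigma)}\pi(\bx')
\quad\text{and then}\quad
q_T \;\le\; \min_{\bsigma\in\textsc{Rect}(\blambda,\bmu)}\min_{\bx'\in\mathcal{H}_S(\bsigma)}\pi(\bx') \;\le\; q_\pi,
\]
which is $q_T \le q_\pi$ once more, not $q_\pi \le q_T$. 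The geometric fact you establish---that $[\bell,\bu]$ meets every hypercube on each nonzero face---is true, but it only yields \emph{upper} bounds on $q_T$, which is the wrong direction. In particular, the step ``combined with the assumed inequality, this last quantity is $\le$ interior-min'' does not follow: the contrapositive hypothesis is interior-min $\le$ boundary-min, and from $q_T \le$ boundary-min you cannot deduce $q_T \le$ interior-min.

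The fix uses the two inclusions that actually drive the paper's direct proof. First, every \emph{interior} hypercube $\bsigma \in \textsc{Face}(\blambda,\bmu,\bzero)$ satisfies $\mathcal{H}_S(\bsigma) \subseteq [\bell,\bu]$, which gives the bound you need on $q_\pi$, namely $q_\pi \le \min_{\bsigma\in\textsc{Face}(\blambda,\bmu,\bzero)}\min_{\bx'}\pi(\bx')$. Second, $[\bell,\bu] \subseteq \bigcup_{\bsigma\in\textsc{Rect}(\blambda,\bmu)}\mathcal{H}_S(\bsigma)$, so $q_T \ge \min_{\bsigma\in\textsc{Rect}(\blambda,\bmu)}\min_{\bx'}\pi(\bx')$. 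Under your contrapositive hypothesis the interior-min equals the $\textsc{Rect}$-min, and chaining gives $q_\pi \le q_T$. The paper proves exactly these two inclusions (as intermediate claims inside its proof of Lemma~\ref{lem:characterization_customers}) and then argues forward from a witness $(\bell,\bu,v)$ with $R^{T_S\circ\pi}(\bell,\bu,v) < R^\pi(\bell,\bu,v)$; once you swap in the correct inclusions, your contrapositive route is logically equivalent to the paper's, just run in reverse.
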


The proof of the above lemma (as well as the proof of each subsequent lemma in \S\ref{sec:proof:thm:main}) can be found in \S\ref{sec:proof:lemmas}. In words, the above lemma says that if a bucket satisfies $(\blambda,\bmu) \in \mathcal{A}^\pi_S$, then there must exist a feature vector in one of the hypercubes in the boundary of the hyperrectangle corresponding to the bucket, $\bx \in \cup_{\bdelta \in \{-1,0,1\}^D: \bdelta \neq \bzero} \cup_{\bsigma \in \textsc{Face}(\blambda,\bmu,\bdelta)} \mathcal{H}_S(\bsigma) $, for which the pricing policy offers a price, $\pi(\bx)$, that is strictly less than the price offered by the pricing policy for all feature vectors in the hypercubes in the interior of the hyperrectangle corresponding to the bucket. For every integer $M \in \{1,\ldots,S-1\}$, it follows from Lemma~\ref{lem:characterization_customers} that 
\begin{align}
&\mathcal{A}^\pi_S\subseteq \bigcup_{\bdelta \in \{-1,0,1\}^D: \; \bdelta \neq \bzero} \left \{ (\blambda,\bmu) \in \mathcal{B}_{S}: \; \min_{\bsigma \in \textsc{Face}(\blambda,\bmu,\bzero)} \min_{\bx \in \mathcal{H}_S(\bsigma)} \pi(\bx) > \min \limits_{\bsigma \in \textsc{Face}(\blambda,\bmu,\bdelta)} \min \limits_{\bx \in \mathcal{H}_S(\bsigma)} \pi(\bx) \right \} \notag \\
& \subseteq \mathcal{B}^{<}_{S,M} \cup \bigcup_{\bdelta \in \{-1,0,1\}^D: \bdelta \neq \bzero} \underbrace{ \left \{ (\blambda,\bmu) \in \mathcal{B}_{S,M}^{\ge}: \min_{\bsigma \in \textsc{Face}(\blambda,\bmu,\bzero)} \min_{\bx \in \mathcal{H}_S(\bsigma)} \pi(\bx) > \min \limits_{\bsigma \in \textsc{Face}(\blambda,\bmu,\bdelta)} \min \limits_{\bx \in \mathcal{H}_S(\bsigma)} \pi(\bx) \right \}}_{\triangleq \mathcal{A}^\pi_{S,M,\bdelta }} \label{line:A_pi_S_supset}
\end{align}
where the sets $\mathcal{B}^<_{S,M}$ and $\mathcal{B}^{\ge}_{S,M}$ in \eqref{line:A_pi_S_supset} are defined below as 
\begin{align*}
\mathcal{B}_{S,M}^{<} \triangleq\left \{ (\blambda,\bmu) \in \mathcal{B}_S: \min_{d \in \{1,\ldots,D\}} \left \{ \mu_d - \lambda_d \right \} < M \right \} \text{ and } \mathcal{B}_{S,M}^{\ge} \triangleq\left \{ (\blambda,\bmu) \in \mathcal{B}_S: \min_{d \in \{1,\ldots,D\}} \left \{ \mu_d - \lambda_d \right \} \ge M \right \}.
\end{align*}
For each integer $M \in \{1,\ldots,S-1\}$, it follows from algebra that
\begin{align}
\left| \mathcal{B}^<_{S,M} \right| &\le \sum_{m=0}^{M-1} \sum_{d=1}^D \left| \left \{ (\blambda,\bmu) \in \mathcal{B}_S: \mu_d - \lambda_d = m \right \} \right| \le M D S^{2D-1}. \label{line:A_pi_S_supset:1}
\end{align}
Moreover, the following proposition establishes a conservative upper bound on the cardinality of $\mathcal{A}^\pi_{S,M,\bdelta}$ for every integer $M \in \{1,\ldots,S-1\}$ and every vector $\bdelta \in \{-1,0,1\}^D$ that satisfies $\bdelta \neq \bzero$:
\begin{proposition} \label{prop:bound_A_S_delta_omega}
 $| \mathcal{A}^\pi_{S,M,\bdelta}| \le \frac{ 2D K S^{2D}}{M}$.
\end{proposition}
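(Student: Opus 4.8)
\emph{Plan of proof.} The first step is to replace the policy $\pi$ by the cell-minimum function it induces: for each grid cell $\bsigma\in\{1,\dots,S\}^D$ set $q(\bsigma):=\min_{\bx\in\mathcal H_S(\bsigma)}\pi(\bx)$, a minimum that is attained and that takes at most $K$ values by Assumption~\ref{ass:discrete}. In these terms, $(\blambda,\bmu)\in\mathcal A^\pi_{S,M,\bdelta}$ exactly when $\mu_d-\lambda_d\ge M$ for all $d$ and $\min_{\bsigma\in\textsc{Face}(\blambda,\bmu,\bzero)}q(\bsigma)>\min_{\bsigma\in\textsc{Face}(\blambda,\bmu,\bdelta)}q(\bsigma)$. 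Since $\bdelta\ne\bzero$, fix a coordinate $e$ with $\delta_e\ne 0$; reflecting the grid in that coordinate (the relabeling $x_e\mapsto 1-x_e$, which permutes cells, reverses the $e$-th index, fixes $\mathcal P$, and swaps the $e$-th components of $\blambda$ and $\bmu$) lets me assume without loss of generality that $\delta_e=1$. The point of this normalization is that $\textsc{Face}(\blambda,\bmu,\bdelta)$ then lies on the single slab $\{\sigma_e=\mu_e\}$ and does not depend on $\lambda_e$.

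Next I would partition $\mathcal A^\pi_{S,M,\bdelta}=\bigsqcup_{v\in\mathcal P}\mathcal A(v)$ according to the value $v:=\min_{\bsigma\in\textsc{Face}(\blambda,\bmu,\bdelta)}q(\bsigma)$, so that it suffices to bound $|\mathcal A(v)|$ for each of the at most $K$ choices of $v$. Fix such a $v$; the piece is empty when $v$ is the largest price, so assume not and let $v^{+}$ be the next price above $v$. Every bucket in $\mathcal A(v)$ satisfies $q\ge v^{+}$ on the whole interior box $\textsc{Face}(\blambda,\bmu,\bzero)=\times_{d}\{\lambda_d+1,\dots,\mu_d-1\}$, whose side lengths are all at least $M-1$; and since the $\bdelta$-face ignores $\lambda_e$, replacing $\lambda_e$ by any value in $\{\lambda_e,\dots,\mu_e-M\}$ keeps the bucket in $\mathcal A(v)$. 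Therefore, once the $2D-1$ coordinates other than $\lambda_e$ are fixed, the admissible values of $\lambda_e$ form a block ending at $\mu_e-M$ whose left end is controlled by the largest index $a<\mu_e$ at which the one-dimensional slice-minimum $Z(a):=\min\{q(\bsigma):\sigma_e=a,\ \lambda_d<\sigma_d<\mu_d\ (d\ne e)\}$ fails to exceed $v$; for the block to be nonempty this must occur at least $M-1$ places below $\mu_e$.

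The heart of the argument is then a counting estimate over these blocks. Fixing the $2D-2$ coordinates other than $\lambda_e,\mu_e$ fixes the function $Z$; for each $\mu_e$ the admissible block has length $(\ell-M+1)^{+}$, where $\ell$ is the length of the maximal run of $\{a:Z(a)>v\}$ terminating at $\mu_e-1$. I would try to show that these block lengths sum to at most $S$ over $\mu_e$, for each fixed choice of the remaining coordinates: when $\bdelta$ is a single coordinate vector this is immediate, since there the $\bdelta$-face is also governed by $Z$, so every admissible $\mu_e$ is separated from the next by an index where $Z\le v$, the runs attached to distinct admissible $\mu_e$ are pairwise disjoint subsets of the at most $S$ indices at which $Z>v$, and a run of length $\ell$ is charged at most $\ell$ in total. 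This would give $|\mathcal A(v)|\le S\cdot S^{2D-2}=S^{2D-1}$, hence $|\mathcal A^\pi_{S,M,\bdelta}|\le KS^{2D-1}\le 2DKS^{2D}/M$ because $M\le S$. The obstacle I expect is precisely the general case $\bdelta\ne\bzero$ with two or more nonzero components: then the $\bdelta$-face is controlled by a different one-dimensional minimum than $Z$, the admissible $\mu_e$ need not be separated by ``$Z\le v$'' indices, and the attached runs can be nested rather than disjoint; to recover the bound one must instead argue that configurations admitting many simultaneously-admissible $\mu_e$ are rare, which is where the finiteness of $\mathcal P$ (Assumption~\ref{ass:discrete}) and the uniform $M$-gap must be used together.
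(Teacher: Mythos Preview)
Your reduction to the cell-minimum $q$ and the price-level partition are fine, and your block argument when $\bdelta$ has a single nonzero component is correct. But the proposal is incomplete for general $\bdelta$, and the last sentence is not a proof. The obstacle you name is real: with the $2D-2$ coordinates other than $\lambda_e,\mu_e$ fixed, the minimum over the $\bdelta$-face need not equal $Z(\mu_e)$ once $\bdelta$ has two or more nonzero entries, so the runs of $\{Z>v\}$ attached to distinct admissible $\mu_e$ can be nested rather than disjoint, and the per-slice count of admissible $(\lambda_e,\mu_e)$ pairs can be $\Theta(S^2)$ rather than $O(S)$. (Take $D=2$, $\bdelta=(1,1)$, $q(\sigma_1,\sigma_2)=p_1$ iff $\sigma_2=S$ and $q=p_2$ otherwise, and fix $(\lambda_2,\mu_2)=(1,S)$: then the $\bdelta$-face is the single corner $(\mu_1,S)$ with value $p_1$, the interior sees only $\sigma_2<S$ and hence has value $p_2$, and every pair $(\lambda_1,\mu_1)$ with $\mu_1-\lambda_1\ge M$ lands in $\mathcal A(p_1)$.) Your per-slice bound therefore fails, and the vague appeal to ``rare configurations'' does not repair it.

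The paper closes the gap by changing the direction of the one-parameter sweep. Instead of moving along a coordinate axis, it covers $\mathcal B^{\ge}_{S,M}$ by \emph{diagonal} lines in the direction $(\bdelta,\bdelta)$: from a $\bdelta$-anchor $(\bar{\blambda},\bar{\bmu})$ at the grid boundary the line is $\{(\bar{\blambda}+\iota\bdelta,\bar{\bmu}+\iota\bdelta):\iota\ge 0\}\cap\mathcal B^{\ge}_{S,M}$; there are at most $DS^{2D-1}$ anchors and each line has length at most $S$. The key observation (Lemma~\ref{lem:not_too_close}) is that $\textsc{Face}(\blambda,\bmu,\bdelta)\subseteq\textsc{Face}(\blambda+\iota\bdelta,\bmu+\iota\bdelta,\bzero)$ for every $1\le\iota\le M-1$, a direct coordinate-wise check using $\mu_d-\lambda_d\ge M$. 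After the $K$-fold price-level partition, this inclusion forces any two elements of a fixed piece $\mathcal A^\pi_{S,M,\bdelta,k}$ on the same $\bdelta$-line to be at least $M$ steps apart (the low-price cell on the $\bdelta$-face of the earlier bucket lies in the interior of every bucket within $M-1$ steps and so pushes its interior minimum below $p_k$), so each line contributes at most $\lceil S/M\rceil$ and the bound follows as $DS^{2D-1}\cdot K\cdot\lceil S/M\rceil\le 2DKS^{2D}/M$. The missing idea, in short, is to translate $\blambda$ and $\bmu$ \emph{together} along $\bdelta$ so the old $\bdelta$-face slides rigidly into the new interior; this works uniformly for every $\bdelta\ne\bzero$, whereas your axis-aligned sweep works only when the $\bdelta$-face is itself a coordinate face.
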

Since $\pi \in \Pi$ was chosen arbitrarily, we observe from the above reasoning that the following holds for every arbitrary $S > 2$:
\begin{align}
\sup_{\pi \in \Pi} \frac{\left| \mathcal{A}^\pi_S \right| }{S^{2D}}  &\le \sup_{\pi \in \Pi} \left \{\frac{1}{S^{2D}} \cdot \min_{M \in \{1,\ldots,S-1\}} \left \{ \left| \mathcal{B}^<_{S,M} \right| + \sum_{\bdelta \in \{-1,0,1\}^D: \bdelta \neq \bzero} \left| \mathcal{A}^\pi_{S,M,\bdelta} \right| \right \} \right \} \notag \\
&\le \frac{1}{S^{2D}} \cdot \min_{M \in \{1,\ldots,S-1\}} \left \{ M D S^{2D-1} + \sum_{\bdelta \in \{-1,0,1\}^D: \bdelta \neq \bzero}\frac{2 D K S^{2D}}{M} \right \}  \notag \\
&\le  \underbrace{ \frac{ D}{S} \left(  \left \lceil \sqrt{S} \right \rceil  + \frac{ \left(3^D - 1 \right) 2 K S}{ \left \lceil \sqrt{S} \right \rceil} \right )}_{\triangleq \beta_S}.
\label{line:beta_S}
\end{align}
Indeed, the first inequality follows from \eqref{line:A_pi_S_supset}, the second inequality follows from \eqref{line:A_pi_S_supset:1} and Proposition~\ref{prop:bound_A_S_delta_omega}, the third inequality follows from setting  $M = \left \lceil \sqrt{S} \right \rceil$. Since $S$  was chosen arbitrarily, we have\looseness=-1
\begin{align}
\limsup_{S \to \infty} \sup_{\pi \in \Pi} \frac{\left| \mathcal{A}^\pi_S \right| }{S^{2D}}  \le \lim_{S \to \infty} \beta_S = 0, \label{line:A_gets_small}
\end{align}
where the inequality follows from \eqref{line:beta_S} and the equality follows from the definition of $\beta_S$. 

To interpret the upper bound from line~\eqref{line:A_gets_small}, we recall that the number of buckets satisfies $|\mathcal{B}_S| = S^{2D}$. Line~\eqref{line:A_gets_small} can thus be interpreted as showing that the proportion $\frac{| \mathcal{A}^\pi_S|}{|\mathcal{B}_S|}$ of buckets that are in the set $\mathcal{A}_S^\pi$ converges to zero as $S$ tends to infinity, uniformly over all $\pi \in \Pi$. To conclude the proof of Proposition~\ref{prop:pi}, we use Assumption~\ref{ass:continuous} to show that the probability of a random buyer being contained in any vanishingly small proportion of buckets converges to zero as $S$ tends to infinity. This result is shown in the following lemma. 
\begin{lemma} \label{lem:proportion_bucket}
Given any sequence $\alpha_1,\alpha_2,\ldots \in (0,1)$ that satisfies $\lim\limits_{S \to \infty} \alpha_S = 0$, we have $$\lim \limits_{S \to \infty} \sup \limits_{\mathcal{A}_S \subseteq \mathcal{B}_S: \frac{| \mathcal{A}_S|}{| \mathcal{B}_S|} \le \alpha_S } \sum_{(\blambda,\bmu) \in \mathcal{A}_S} p_S(\blambda,\bmu) = 0.$$
\end{lemma}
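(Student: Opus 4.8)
The plan is to recast the combinatorial hypothesis $|\mathcal{A}_S|/|\mathcal{B}_S|\le\alpha_S$ as the statement that the buckets in $\mathcal{A}_S$ occupy a subset of $[0,1]^{2D}$ of small Lebesgue measure, and then to conclude using the absolute continuity of the integral of the density $g$ furnished by Assumption~\ref{ass:continuous}. Throughout, let $\mathrm{Leb}$ denote Lebesgue measure on $\R^{2D}$.

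First I would set up the correspondence between buckets and boxes. For each bucket $(\blambda,\bmu)\in\mathcal{B}_S$, let $\mathcal{G}_S(\blambda,\bmu)\triangleq\mathcal{H}_S(\blambda)\times\mathcal{H}_S(\bmu)\subseteq\R^{2D}$. Because $\bsigma_S(\bx)=\bsigma$ if and only if $\bx\in\mathcal{H}_S(\bsigma)$, the definition of $p_S$ gives $p_S(\blambda,\bmu)=\Prb((\bbl,\bbu)\in\mathcal{G}_S(\blambda,\bmu))=\int_{\mathcal{G}_S(\blambda,\bmu)}g$, where the last equality is Assumption~\ref{ass:continuous}. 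Since each coordinate interval $\mathcal{H}_{S,d}(\cdot)$ has length $\sfrac{1}{S}$, every box satisfies $\mathrm{Leb}(\mathcal{G}_S(\blambda,\bmu))=S^{-2D}$, and the boxes $\{\mathcal{G}_S(\blambda,\bmu):(\blambda,\bmu)\in\mathcal{B}_S\}$ are pairwise disjoint, since the hypercubes $\mathcal{H}_S(\bsigma)$ are. Consequently, for any $\mathcal{A}_S\subseteq\mathcal{B}_S$, writing $\mathcal{G}_S(\mathcal{A}_S)\triangleq\bigcup_{(\blambda,\bmu)\in\mathcal{A}_S}\mathcal{G}_S(\blambda,\bmu)$, disjointness yields both $\sum_{(\blambda,\bmu)\in\mathcal{A}_S}p_S(\blambda,\bmu)=\int_{\mathcal{G}_S(\mathcal{A}_S)}g$ and $\mathrm{Leb}(\mathcal{G}_S(\mathcal{A}_S))=|\mathcal{A}_S|\cdot S^{-2D}$. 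If moreover $|\mathcal{A}_S|/|\mathcal{B}_S|\le\alpha_S$, then since $\mathcal{B}_S\subseteq\{1,\dots,S\}^D\times\{1,\dots,S\}^D$ we obtain $|\mathcal{A}_S|\le\alpha_S|\mathcal{B}_S|\le\alpha_S S^{2D}$, and hence $\mathrm{Leb}(\mathcal{G}_S(\mathcal{A}_S))\le\alpha_S$.

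Second, I would invoke the absolute continuity of the integral of $g$. Since $g$ is a probability density, $g\in L^1(\R^{2D})$, so for every $\varepsilon>0$ there is $\delta>0$ such that $\int_A g\le\varepsilon$ for every measurable $A\subseteq\R^{2D}$ with $\mathrm{Leb}(A)\le\delta$; indeed, splitting $g=g\,\mathbb{I}\{g\le M\}+g\,\mathbb{I}\{g>M\}$, one picks $M$ with $\int g\,\mathbb{I}\{g>M\}\le\sfrac{\varepsilon}{2}$ by dominated convergence and takes $\delta=\sfrac{\varepsilon}{2M}$. Given such $\varepsilon$ and $\delta$, the hypothesis $\alpha_S\to 0$ provides an index $S_0$ with $\alpha_S\le\delta$ for all $S\ge S_0$; then for every $S\ge S_0$ and every $\mathcal{A}_S\subseteq\mathcal{B}_S$ with $|\mathcal{A}_S|/|\mathcal{B}_S|\le\alpha_S$ we have $\mathrm{Leb}(\mathcal{G}_S(\mathcal{A}_S))\le\alpha_S\le\delta$, whence $\sum_{(\blambda,\bmu)\in\mathcal{A}_S}p_S(\blambda,\bmu)=\int_{\mathcal{G}_S(\mathcal{A}_S)}g\le\varepsilon$. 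Taking the supremum over all such $\mathcal{A}_S$ gives $\sup_{\mathcal{A}_S}\sum_{(\blambda,\bmu)\in\mathcal{A}_S}p_S(\blambda,\bmu)\le\varepsilon$ for every $S\ge S_0$, and letting $\varepsilon\downarrow 0$ yields the claim.

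Every step here is routine bookkeeping about the grid except for the one genuinely analytic input, namely the uniform (over all sets of small Lebesgue measure) absolute continuity of $A\mapsto\int_A g$. I regard this as the ``main obstacle'' only in the nominal sense that it is the single place where Assumption~\ref{ass:continuous} is used in an essential way, and it is a standard consequence of $g\in L^1$ requiring no further structure; the uniformity over $\pi\in\Pi$ needed for Proposition~\ref{prop:pi} has already been absorbed into the bound $|\mathcal{A}_S^\pi|/S^{2D}\le\beta_S$ established in~\eqref{line:beta_S}, so no additional care is required here.
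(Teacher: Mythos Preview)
Your proof is correct and follows essentially the same strategy as the paper: map each bucket to a box of Lebesgue measure $S^{-2D}$, bound the measure of the union by $\alpha_S$, and then invoke the absolute continuity of $\Prb((\bbl,\bbu)\in\cdot)$ with respect to Lebesgue measure (the paper's Claim~\ref{claim:second_inequality:probability:intermediary}). The only difference is that you establish absolute continuity via the standard truncation argument for $g\in L^1$, whereas the paper proves the same fact through a slightly more circuitous route using Fatou's lemma and Borel--Cantelli; this is a cosmetic distinction, not a different approach.
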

We conclude that
\begin{align*}
\limsup_{S \to \infty} \sup_{\pi \in \Pi} \sum_{(\blambda,\bmu) \in \mathcal{A}^\pi_S} p_S(\blambda,\bmu) \le \limsup_{S \to \infty} \sup_{\mathcal{A}_S \subseteq \mathcal{B}_S: \frac{| \mathcal{A}_S|}{| \mathcal{B}_S|} \le \beta_S} \sum_{(\blambda,\bmu) \in \mathcal{A}_S} p_S(\blambda,\bmu) = 0,
\end{align*}
where the first inequality follows from line~\eqref{line:beta_S}  and the equality follows from Lemma~\ref{lem:proportion_bucket} and line~\eqref{line:A_gets_small}. This completes our proof of Proposition~\ref{prop:pi}. 
\subsection{Proof of Proposition~\ref{prop:bound_A_S_delta_omega}} \label{sec:proof:lines}

Consider any arbitrary $S > 1$, pricing policy $\pi \in \Pi$, vector $\bdelta \in \{-1,0,1\}^D$ that satisfies $\bdelta \neq \bzero$, and integer $M \in \{1,\ldots,S-1\}$. We say henceforth that a bucket $(\blambda,\bmu) \in \mathcal{B}^{\ge}_{S,M}$ is a $\bdelta$-anchor if and only if there exists $d \in \{1,\ldots,D\}$ such that $\delta_d \neq 0$, $\lambda_d = 1$ if $\delta_d = 1$, and $\mu_d = S$ if $\delta_d = -1$. For convenience, we denote the set of all $\bdelta$-anchors by 
\begin{align*}
\mathfrak{A}_{S,M}(\bdelta) \triangleq \bigcup_{d \in \{1,\ldots,D\}: \delta_d = 1} \left \{ (\blambda,\bmu) \in \mathcal{B}^{\ge}_{S,M}: \lambda_d = 1 \right \}  \cup \bigcup_{d \in \{1,\ldots,D\}: \delta_d = -1} \left \{ (\blambda,\bmu) \in \mathcal{B}^{\ge}_{S,M}: \mu_d = S \right \} .
\end{align*}
Furthermore, we say henceforth that $(\blambda,\bmu) \in \mathcal{B}^{\ge}_{S,M}$ is contained in the $\bdelta$-{line} emanating from $(\bar{\blambda},\bar{\bmu}) \in \mathfrak{A}_{S,M}(\bdelta)$ if and only if there exists an integer $\iota \ge 0$ such that $(\blambda,\bmu) = (\bar{\blambda},\bar{\bmu}) + \iota (\bdelta,\bdelta)$. For convenience, we denote the 
$\bdelta$-line emanating from $(\bar{\blambda},\bar{\bmu}) \in \mathfrak{A}_{S,M}(\bdelta)$ by $$\mathcal{L}_{S,M}(\bar{\blambda},\bar{\bmu},\bdelta) \triangleq \{ (\blambda,\bmu) \in \mathcal{B}^{\ge}_{S,M}: \exists\text{ integer } \iota \ge 0 \text{ such that } (\blambda,\bmu) = (\bar{\blambda},\bar{\bmu}) + \iota (\bdelta,\bdelta)\}.$$
 
\begin{figure}[t]
\centering
\FIGURE{%
\begin{minipage}{\linewidth}
\centering
\subfloat[]{
\includegraphics[width=0.4\textwidth]{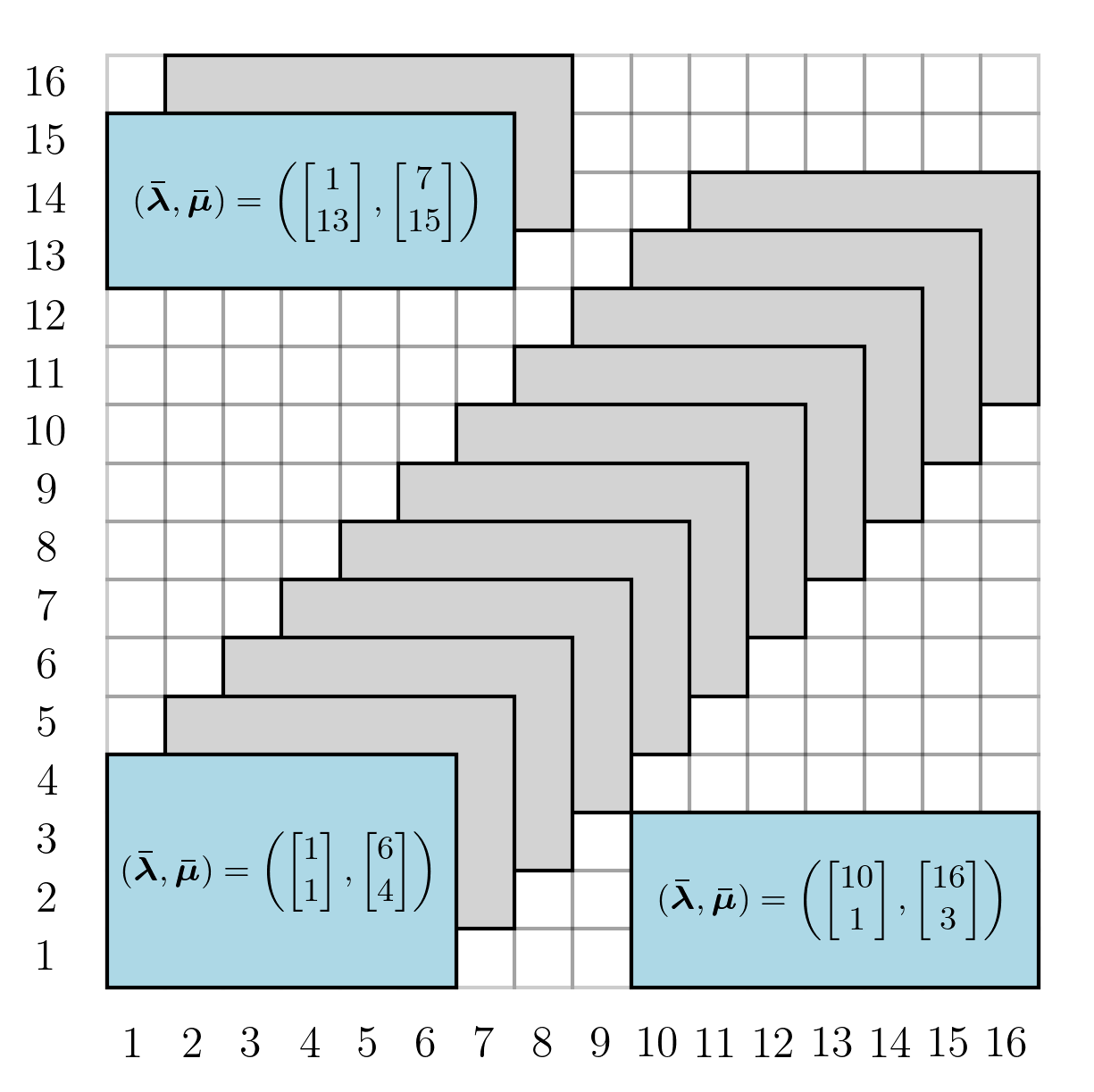}
\label{fig:lines_upright}}
\subfloat[]{
\includegraphics[width=0.4\textwidth]{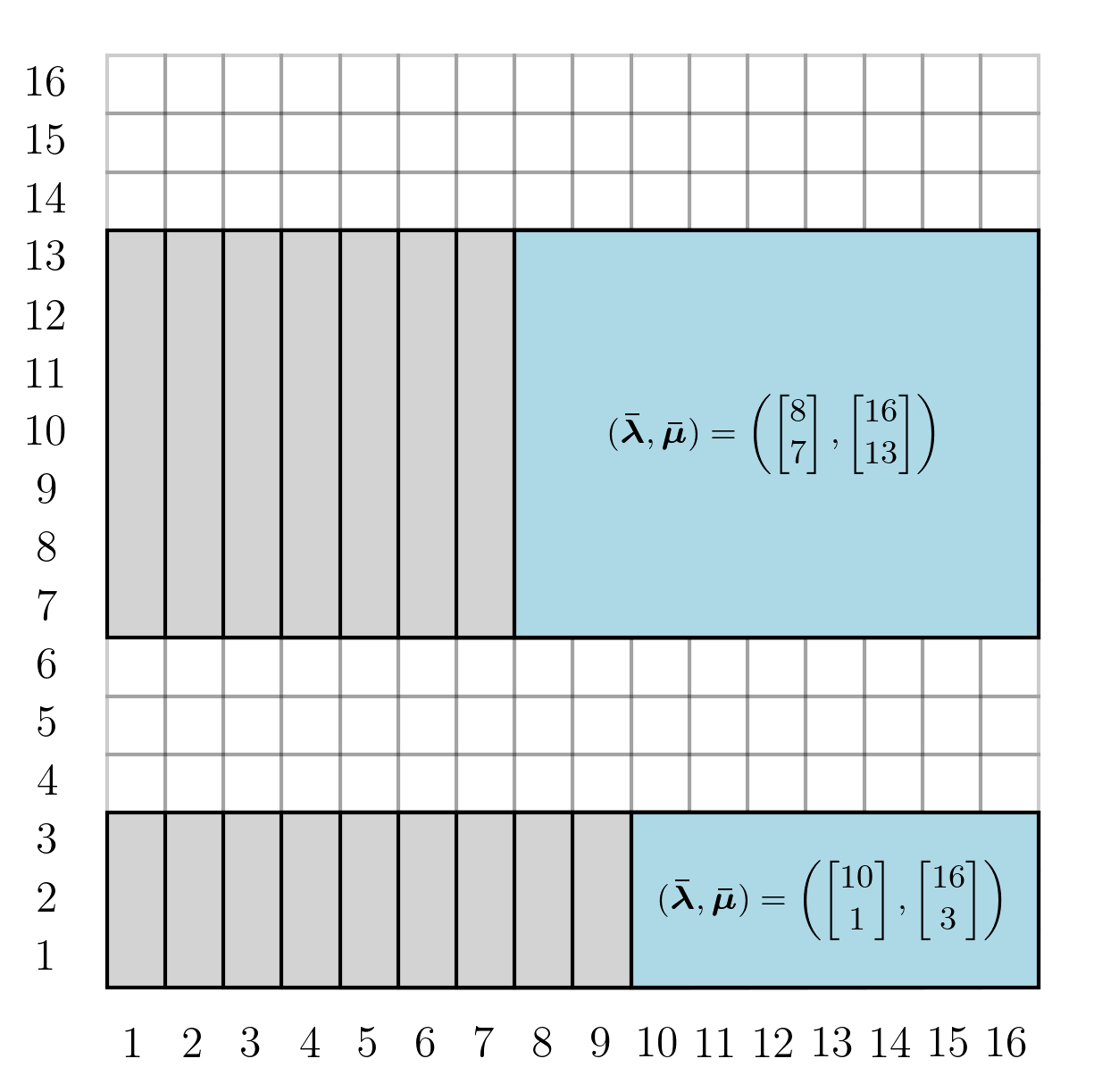}
\label{fig:lines_left}}
\end{minipage}
}
{Visualizations of $\bdelta$-anchors and $\bdelta$-lines\\[-3em]\label{fig:lines}} 
{Visualization of a subset of $\bdelta$-anchors and the $\bdelta$-lines emanating from those $\bdelta$-anchors for the case of $S = 16$. Specifically, \emph{(a)} shows three examples of $\bdelta$-anchors for the case of $\bdelta = (1,1)$, and \emph{(b)} shows two examples of $\bdelta$-anchors for the case of $\bdelta = (-1,0)$. In both \emph{(a)} and \emph{(b)}, the $\bdelta$-anchors are shown in blue. For each $\bdelta$-anchor $(\bar{\blambda},\bar{\bmu})$, the blue rectangle together with the corresponding grey rectangles constitute the $\bdelta$-line emanating from $(\bar{\blambda},\bar{\bmu})$. For example, in \emph{(a)}, the $\bdelta$-line emanating from the $\bdelta$-anchor $(\bar{\blambda},\bar{\bmu}) = ((1,1),(6,4))$ is equal to the set of buckets $\{((1,1) + \iota (1,1),(6,4) + \iota (1,1)): \iota \in \{0,\ldots, 9\}\} = \{((1,1),(6,4)),\ldots,((11,16),(11,14))\}$. 
} 
\end{figure}
\noindent A visualization of  $\bdelta$-anchors and $\bdelta$-lines is shown in Figure~\ref{fig:lines}. In the following two intermediary lemmas, we give a conservative upper bound on the number of $\bdelta$-anchors, and we 
 show that the $\bdelta$-lines emanating from the $\bdelta$-anchors form a cover of $\mathcal{B}^{\ge}_{S,M}$. 
\begin{lemma} \label{lem:num_anchors}
 $|\mathfrak{A}_{S,M}(\bdelta)| \le DS^{2D-1}$.
\end{lemma}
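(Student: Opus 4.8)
The plan is to bound $|\mathfrak{A}_{S,M}(\bdelta)|$ by a straightforward union bound over the index sets appearing in its definition, since each set in the union pins one coordinate of the tuple $(\blambda,\bmu) \in \{1,\ldots,S\}^D \times \{1,\ldots,S\}^D$ to a fixed value. First, I would observe that $\mathfrak{A}_{S,M}(\bdelta)$ is, by definition, a union of sets of two types: for each coordinate $d \in \{1,\ldots,D\}$ with $\delta_d = 1$ there is the set $\{(\blambda,\bmu) \in \mathcal{B}^{\ge}_{S,M}: \lambda_d = 1\}$, and for each coordinate $d$ with $\delta_d = -1$ there is the set $\{(\blambda,\bmu) \in \mathcal{B}^{\ge}_{S,M}: \mu_d = S\}$. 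Because no coordinate $d$ can simultaneously satisfy $\delta_d = 1$ and $\delta_d = -1$, the number of sets in this union equals $|\{d \in \{1,\ldots,D\}: \delta_d \neq 0\}|$, which is at most $D$.

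Second, I would bound the cardinality of each individual set in the union. Each such set is contained in $\{(\blambda,\bmu) \in \{1,\ldots,S\}^D \times \{1,\ldots,S\}^D : \text{one fixed coordinate}\}$, where one of the $2D$ entries of $(\blambda,\bmu)$ is pinned to a specific value (either $\lambda_d = 1$ or $\mu_d = S$); discarding the defining constraints $\blambda \le \bmu$ and $\min_{d}\{\mu_d - \lambda_d\} \ge M$ only enlarges the set. The remaining $2D - 1$ coordinates each range over at most $S$ values, so each set in the union has cardinality at most $S^{2D-1}$. Combining this with the count from the previous paragraph via the union bound yields $|\mathfrak{A}_{S,M}(\bdelta)| \le D \cdot S^{2D-1}$, as claimed.

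I do not anticipate a real obstacle here. The only points requiring mild care are (i) correctly counting the number of sets in the union — each coordinate contributes at most one set, since $\delta_d$ cannot be both $+1$ and $-1$ — and (ii) recognizing that the resulting bound is deliberately conservative: the sets in the union may overlap, and the constraints cutting $\{1,\ldots,S\}^{2D}$ down to $\mathcal{B}^{\ge}_{S,M}$ are being thrown away. Both simplifications only make the inequality weaker, which is exactly what the statement asks for.
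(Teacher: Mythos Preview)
Your proposal is correct and matches the paper's own proof essentially line for line: the paper also writes $\mathfrak{A}_{S,M}(\bdelta)$ as the union over $d$ with $\delta_d = \pm 1$, applies a union bound, bounds each set by $S^{2D-1}$ via the inclusion $\mathcal{B}^{\ge}_{S,M} \subseteq \{1,\ldots,S\}^{2D}$ with one coordinate fixed, and sums to get at most $D S^{2D-1}$.
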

\begin{lemma} \label{lem:unique_line}
$\{\mathcal{L}_{S,M}(\bar{\blambda},\bar{\bmu},\bdelta): (\bar{\blambda},\bar{\bmu}) \in \mathfrak{A}_{S,M}(\bdelta) \}$ is a collection of sets whose union is equal to $\mathcal{B}^{\ge}_{S,M}$. 
\end{lemma}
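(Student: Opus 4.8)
The plan is to prove the two set inclusions separately. The inclusion
$\bigcup_{(\bar{\blambda},\bar{\bmu}) \in \mathfrak{A}_{S,M}(\bdelta)} \mathcal{L}_{S,M}(\bar{\blambda},\bar{\bmu},\bdelta) \subseteq \mathcal{B}^{\ge}_{S,M}$ is immediate, since by its very definition every $\bdelta$-line $\mathcal{L}_{S,M}(\bar{\blambda},\bar{\bmu},\bdelta)$ is a subset of $\mathcal{B}^{\ge}_{S,M}$. The substance of the lemma is therefore the reverse inclusion: every bucket $(\blambda,\bmu) \in \mathcal{B}^{\ge}_{S,M}$ must lie on the $\bdelta$-line emanating from some $\bdelta$-anchor.

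The key structural observation I would use is that translating a bucket by $(\bdelta,\bdelta)$ leaves each of the $D$ ``widths'' $\mu_d - \lambda_d$ unchanged, because both $\lambda_d$ and $\mu_d$ move by the same amount $\delta_d$. Consequently, as long as a translate $(\blambda,\bmu) - \iota(\bdelta,\bdelta)$ still has all of its coordinates inside $\{1,\ldots,S\}$, it automatically still satisfies $\blambda \le \bmu$ and $\min_d (\mu_d - \lambda_d) \ge M$, hence still belongs to $\mathcal{B}^{\ge}_{S,M}$. So the idea is to ``walk backwards'' from $(\blambda,\bmu)$ in the direction $-\bdelta$ for as long as the grid permits, and argue that the last reachable bucket is an anchor.

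Concretely, given $(\blambda,\bmu) \in \mathcal{B}^{\ge}_{S,M}$, I would set $\iota^* \triangleq \min\{\, \min_{d:\,\delta_d = 1}(\lambda_d - 1),\ \min_{d:\,\delta_d = -1}(S - \mu_d)\,\}$, which is a well-defined nonnegative integer because $\bdelta \neq \bzero$ makes at least one of the two inner minima range over a nonempty index set, and because $(\blambda,\bmu) \in \mathcal{B}^{\ge}_{S,M}$ forces $\lambda_d \ge 1$ and $\mu_d \le S$. Using the width-preservation observation coordinate by coordinate, one checks that $(\bar{\blambda},\bar{\bmu}) \triangleq (\blambda,\bmu) - \iota^*(\bdelta,\bdelta)$ belongs to $\mathcal{B}^{\ge}_{S,M}$; and by the defining minimality of $\iota^*$, some coordinate $d$ with $\delta_d \neq 0$ satisfies either $\bar{\lambda}_d = 1$ (when $\delta_d = 1$) or $\bar{\mu}_d = S$ (when $\delta_d = -1$), so $(\bar{\blambda},\bar{\bmu}) \in \mathfrak{A}_{S,M}(\bdelta)$. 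Since $(\blambda,\bmu) = (\bar{\blambda},\bar{\bmu}) + \iota^*(\bdelta,\bdelta)$ with $\iota^* \ge 0$, we get $(\blambda,\bmu) \in \mathcal{L}_{S,M}(\bar{\blambda},\bar{\bmu},\bdelta)$, establishing the reverse inclusion and completing the proof.

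I do not expect a genuine obstacle here; the argument is elementary combinatorics on the integer grid. The only point requiring care is verifying that \emph{every} intermediate translate $(\blambda,\bmu) - \iota(\bdelta,\bdelta)$ with $0 \le \iota \le \iota^*$ stays inside $\{1,\ldots,S\}^D$ in both the $\blambda$- and $\bmu$-components simultaneously. This holds because along a coordinate with $\delta_d = 1$ the $\blambda$-component reaches its lower bound $1$ before the $\bmu$-component does (as $\lambda_d \le \mu_d$), and along a coordinate with $\delta_d = -1$ the $\bmu$-component reaches its upper bound $S$ first --- and these are precisely the two quantities over which $\iota^*$ is defined --- while coordinates with $\delta_d = 0$ are left untouched.
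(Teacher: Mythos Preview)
Your proposal is correct and follows essentially the same ``walk backwards until you hit the boundary'' strategy as the paper: both arguments translate $(\blambda,\bmu)$ by $-\iota(\bdelta,\bdelta)$ for the largest admissible $\iota$ and observe that the resulting bucket is a $\bdelta$-anchor. The only difference is cosmetic: the paper defines this maximal step implicitly as the largest $\iota'$ for which the translate stays in $\mathcal{B}^{\ge}_{S,M}$, whereas you write down the explicit formula $\iota^* = \min\{\min_{d:\delta_d=1}(\lambda_d-1),\ \min_{d:\delta_d=-1}(S-\mu_d)\}$ and verify the width-preservation and boundary-hitting properties directly.
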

It follows from Lemmas~\ref{lem:num_anchors} and \ref{lem:unique_line} that
\begin{align}
\left| \mathcal{A}^\pi_{S,M,\bdelta} \right| \le D S^{2D - 1} \max_{(\bar{\blambda},\bar{\bmu}) \in \mathfrak{A}_{S,M}(\bdelta)} \left|\mathcal{A}^\pi_{S,M,\bdelta} \cap \mathcal{L}_{S,M} \left( \bar{\blambda},\bar{\bmu},\bdelta \right) \right |. \label{line:bound_line}
\end{align}
Now consider any arbitrary $\bdelta$-anchor $(\bar{\blambda},\bar{\bmu}) \in \mathfrak{A}_{S,M}(\bdelta)$. To develop an upper bound on $|\mathcal{A}^\pi_{S,M,\bdelta} \cap \mathcal{L}_{S,M} \left( \bar{\blambda},\bar{\bmu},\bdelta \right) |$, we begin by decomposing $\mathcal{A}^\pi_{S,M,\bdelta}$ into $K$ subsets of buckets, 
\begin{align*}
\mathcal{A}^\pi_{S,M,\bdelta} 
&= \bigcup_{k=1}^K \underbrace{ \left \{ (\blambda,\bmu) \in \mathcal{B}_{S,M}^{\ge}: p_k = \min_{\bsigma \in \textsc{Face}(\blambda,\bmu,\bzero)} \min_{\bx \in \mathcal{H}_S(\bsigma)} \pi(\bx) \text{ and } p_k > \min \limits_{\bsigma \in \textsc{Face}(\blambda,\bmu,\bdelta)} \min \limits_{\bx \in \mathcal{H}_S(\bsigma)} \pi(\bx) \right \}}_{\triangleq \mathcal{A}^\pi_{S,M,\bdelta,k}},
\end{align*}
where the equality follows from the definition of $\mathcal{A}_{S,M,\bdelta}$ and from Assumption~\ref{ass:discrete}, which implies that $\pi$ is a function of the form $\pi: [0,1]^D \to \{p_1,\ldots,p_K\}$. Therefore, we observe that
\begin{align}
|\mathcal{A}_{S,M,\bdelta}^\pi \cap \mathcal{L}_{S,M} \left( \bar{\blambda},\bar{\bmu},\bdelta \right) |\le K \max_{k \in \{1,\ldots,K\}} \left| \mathcal{A}^\pi_{S,M,\bdelta,k} \cap \mathcal{L}_{S,M} \left( \bar{\blambda},\bar{\bmu},\bdelta \right) \right|. \label{line:bound_step4_1}
\end{align}
Now consider any arbitrary $k \in \{1,\ldots,K\}$. To develop an upper bound on 
$
|\mathcal{A}^\pi_{S,M,\bdelta,k} \cap \mathcal{L}_{S,M} ( \bar{\blambda},\bar{\bmu},\bdelta ) |,
$ 
our high-level strategy is to show that any two buckets $(\blambda,\bmu),(\blambda',\bmu')$ that are contained in $ \mathcal{A}^\pi_{S,M,\bdelta,k} \cap \mathcal{L}_{S,M}(\bar{\blambda},\bar{\bmu},\bdelta)$ must be sufficiently far apart from one another. We show this using an intermediary lemma denoted below by Lemma~\ref{lem:not_too_close}. The lemma shows that if two buckets on the $\bdelta$-line emanating from the $\bdelta$-anchor $(\bar{\blambda},\bar{\bmu})$ are sufficiently close to one another, then the $\bdelta$-face of the hyperrectangle corresponding to one bucket must be contained in the interior of the hyperrectangle corresponding to the other bucket. 
\begin{lemma} \label{lem:not_too_close}
Let $(\blambda,\bmu),(\blambda',\bmu') \in \mathcal{L}_{S,M}(\bar{\blambda},\bar{\bmu},\bdelta)$. If there exists an integer $\iota \in \{1,\ldots,M-1\}$ such that 
$(\blambda',\bmu') = (\blambda,\bmu) + \iota (\bdelta,\bdelta)$, then $\textsc{Face}(\blambda,\bmu,\bdelta) \subseteq \textsc{Face}(\blambda',\bmu',\bzero)$. 
\end{lemma}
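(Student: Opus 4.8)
The plan is to prove Lemma~\ref{lem:not_too_close} by a direct, coordinate-by-coordinate verification. First I would dispose of the degenerate case: the hypothesis is vacuous unless $M \ge 2$, so assume $M \ge 2$, fix an integer $\iota \in \{1,\ldots,M-1\}$, and fix buckets $(\blambda,\bmu),(\blambda',\bmu') \in \mathcal{L}_{S,M}(\bar{\blambda},\bar{\bmu},\bdelta)$ with $\blambda' = \blambda + \iota\bdelta$ and $\bmu' = \bmu + \iota\bdelta$. Two facts I would record up front: (i) the two buckets have the same shape, $\bmu' - \blambda' = \bmu - \blambda$, since adding $\iota\bdelta$ to both endpoints leaves the difference unchanged; and (ii) $\mu_d - \lambda_d \ge M$ for every $d$, because $(\blambda,\bmu) \in \mathcal{L}_{S,M}(\bar{\blambda},\bar{\bmu},\bdelta) \subseteq \mathcal{B}^{\ge}_{S,M}$. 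In particular each set $\{\lambda'_d + 1,\ldots,\mu'_d-1\}$ is nonempty, so $\textsc{Face}(\blambda',\bmu',\bzero) = \times_{d=1}^D \{\lambda'_d+1,\ldots,\mu'_d-1\}$ is well-defined.

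Next I would reduce the inclusion to a system of elementary inequalities. If $\textsc{Face}(\blambda,\bmu,\bdelta)$ is empty the claim is trivial, so fix an arbitrary $\bsigma \in \textsc{Face}(\blambda,\bmu,\bdelta)$; it suffices to show $\lambda'_d < \sigma_d < \mu'_d$ for every $d$, i.e., using $\lambda'_d = \lambda_d + \iota\delta_d$ and $\mu'_d = \mu_d + \iota\delta_d$, that $\lambda_d + \iota\delta_d < \sigma_d < \mu_d + \iota\delta_d$. I would then split into the three cases for $\delta_d$. If $\delta_d = 0$, membership of $\bsigma$ in $\textsc{Face}(\blambda,\bmu,\bdelta)$ gives exactly $\lambda_d < \sigma_d < \mu_d$, which is what is needed. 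If $\delta_d = -1$, then $\sigma_d = \lambda_d$: the left inequality $\lambda_d - \iota < \lambda_d$ holds since $\iota \ge 1$, and the right inequality $\lambda_d < \mu_d - \iota$ holds since $\mu_d - \lambda_d \ge M \ge \iota+1$. If $\delta_d = 1$, then $\sigma_d = \mu_d$: the right inequality $\mu_d < \mu_d + \iota$ holds since $\iota \ge 1$, and the left inequality $\lambda_d + \iota < \mu_d$ holds again since $\mu_d - \lambda_d \ge M \ge \iota+1$. This shows $\bsigma \in \textsc{Face}(\blambda',\bmu',\bzero)$, and since $\bsigma$ was arbitrary, $\textsc{Face}(\blambda,\bmu,\bdelta) \subseteq \textsc{Face}(\blambda',\bmu',\bzero)$, completing the proof.

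The argument has no genuine obstacle beyond careful bookkeeping; the only place that uses the full strength of the hypotheses is the combination $\mu_d - \lambda_d \ge M$ and $\iota \le M-1$, yielding $\mu_d - \lambda_d \ge \iota + 1$, which is precisely what guarantees that translating the hyperrectangle by $\iota$ units in direction $\bdelta$ carries its $\bdelta$-face strictly into the \emph{interior} of the translated hyperrectangle rather than merely onto its boundary. I would also flag explicitly the (immediate) observation that buckets on a common $\bdelta$-line are lattice translates of one another, since that is what licenses substituting $\blambda + \iota\bdelta$ and $\bmu + \iota\bdelta$ for $\blambda'$ and $\bmu'$ throughout the case analysis.
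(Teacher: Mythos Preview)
Your proof is correct and follows essentially the same approach as the paper: a coordinate-by-coordinate case analysis on $\delta_d \in \{-1,0,1\}$, using $\mu_d - \lambda_d \ge M$ together with $\iota \le M-1$ to obtain the strict inequalities $\lambda'_d < \sigma_d < \mu'_d$. The paper presents the same computation at the level of set descriptions (and phrases the width bound as $\mu'_d - \lambda'_d \ge M$, which is equivalent since the two buckets have the same shape), but the logical content is identical.
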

To provide further intuition for Lemma~\ref{lem:not_too_close}, we recall that if two buckets satisfy $(\blambda,\bmu),(\blambda',\bmu') \in \mathcal{L}_{S,M}(\bar{\blambda},\bar{\bmu},\bdelta)$, there must exist nonnegative integers $\iota,\iota'$ such that $(\blambda,\bmu) = (\bar{\blambda},\bar{\bmu}) + \iota (\bdelta,\bdelta)$ and $(\blambda',\bmu') = (\bar{\blambda},\bar{\bmu}) + \iota' (\bdelta,\bdelta)$. If $ \iota' - \iota \in \{1,\ldots,M-1 \}$, then Lemma~\ref{lem:not_too_close} establishes that the $\bdelta$-face of the hyperrectangle corresponding to $(\blambda,\bmu)$ must be contained in the interior of the hyperrectangle corresponding to $(\blambda',\bmu')$. A visualization of Lemma~\ref{lem:not_too_close} is found in Figure~\ref{fig:main-idea}. 

\begin{figure}[t]
\centering
\FIGURE{
\begin{minipage}{\linewidth}
\centering
\includegraphics[width=0.86\textwidth]{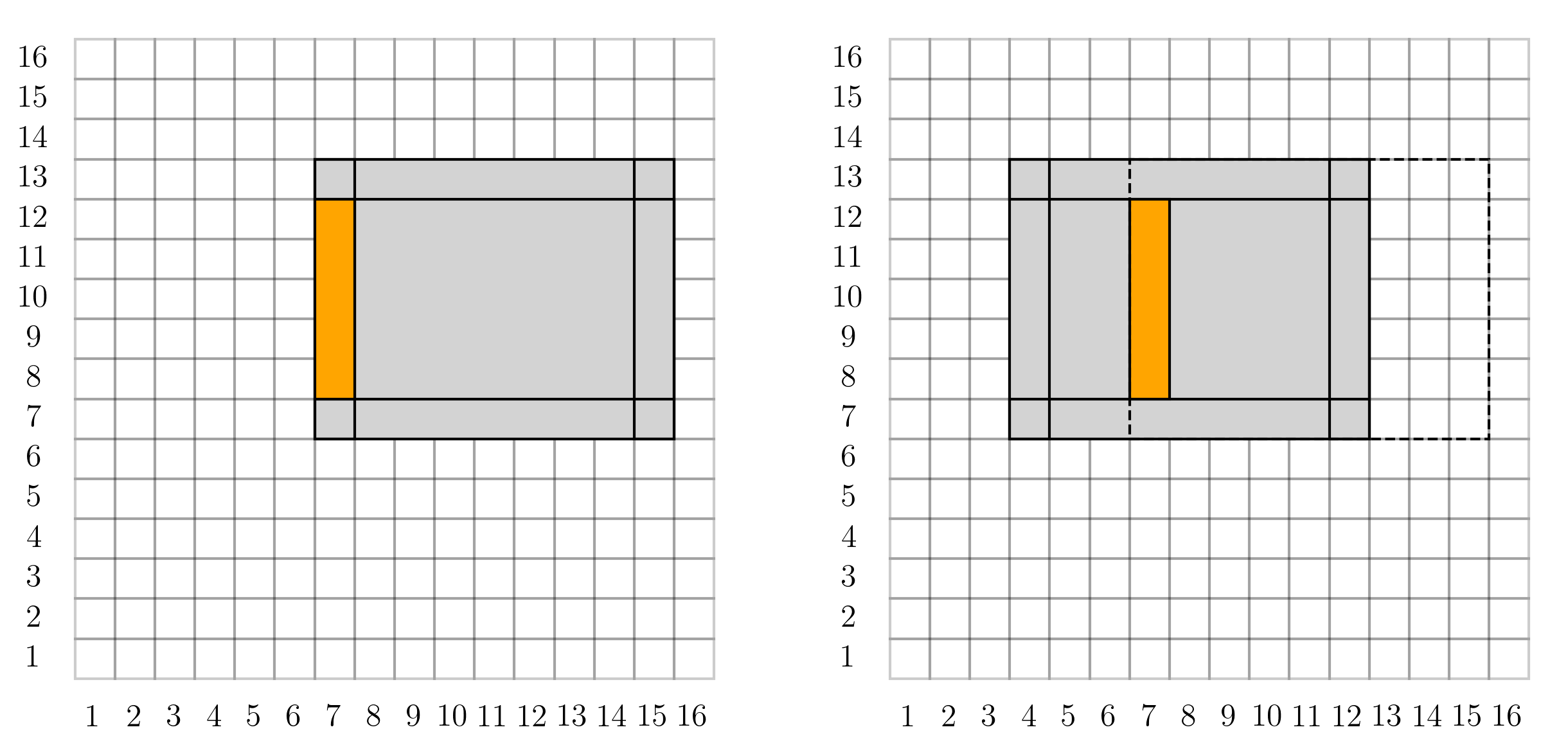}
\label{fig:main-idea}
\end{minipage}
}
{Visualization of Lemma~\ref{lem:not_too_close}} 
{Visualization of Lemma~\ref{lem:not_too_close} for the $\bdelta$-line emanating from the $\bdelta$-anchor $(\bar{\blambda},\bar{\bmu}) = ((8,7),(16,13))$ for $\bdelta = (-1,0)$ (see Figure~\ref{fig:lines}b). The gray rectangles in the left and right figures correspond to $\textsc{Rect}(\bar{\blambda} + \iota \bdelta, \bar{\bmu} + \iota \bdelta) $ for the case of $\iota = 1$ and $\iota = 4$, respectively. The orange rectangle in both figures corresponds to $\textsc{Face}(\bar{\blambda} + 1 \bdelta, \bar{\bmu} + 1 \bdelta, \bdelta)$. We observe from the right figure that $\textsc{Face}(\bar{\blambda} + 1 \bdelta, \bar{\bmu} + 1 \bdelta, \bdelta) \subseteq \textsc{Face}(\bar{\blambda} + 4 \bdelta, \bar{\bmu} + 4 \bdelta, \bzero)$. } 
\end{figure}

Equipped with Lemma~\ref{lem:not_too_close}, we proceed to establish our upper bound on $|\mathcal{A}^\pi_{S,M,\bdelta,k} \cap \mathcal{L}_{S,M} \left( \bar{\blambda},\bar{\bmu},\bdelta \right) |$. Indeed, let $\bar{\iota} \triangleq | \mathcal{L}_{S,M}(\bar{\blambda},\bar{\bmu},\bdelta)|$ denote the maximal integer such that $(\bar{\blambda},\bar{\bmu}) + \bar{\iota} (\bdelta,\bdelta) \in \mathcal{B}^{\ge}_{S,M}$. Note that it follows from the fact that $\bdelta \neq \bzero$ and from the fact that $(\bar{\blambda},\bar{\bmu}) \in \mathcal{B}^{\ge}_{S,M}$ that the integer $\bar{\iota}$ satisfies $\bar{\iota} \le S -1$. We observe that 
\begin{align}
\mathcal{A}^\pi_{S,M,\bdelta,k} \cap \mathcal{L}_{S,M} \left( \bar{\blambda},\bar{\bmu},\bdelta \right) 
= & \; \mathcal{A}^\pi_{S,M,\bdelta,k} \cap \{ (\bar{\blambda} + \iota \bdelta, \bar{\bmu} + \iota \bdelta): \iota \in \{0,\ldots,\bar{\iota} \} \} \notag \\[5pt]
= & \left\{ (\bar{\blambda} + \iota \bdelta, \bar{\bmu} + \iota \bdelta): \quad \begin{aligned} &\iota \in \{0,\ldots,\bar{\iota} \} \\
&p_k = \min \limits_{\bsigma \in \textsc{Face}(\bar{\blambda} + \iota \bdelta,\bar{\bmu} + \iota \bdelta ,\bzero)} \min \limits_{\bx \in \mathcal{H}_S(\bsigma)} \pi(\bx)\\
&p_k > \min \limits_{\bsigma \in \textsc{Face}(\bar{\blambda} + \iota \bdelta,\bar{\bmu} + \iota \bdelta ,\bdelta)} \min \limits_{\bx \in \mathcal{H}_S(\bsigma)} \pi(\bx) \end{aligned} \right \}, \label{line:rewrite_A_L}
\end{align}
where the first equality follows from the definition of $\mathcal{L}_{S,M}(\bar{\blambda},\bar{\bmu},\bdelta)$ and the second equality follows from the definition of $ \mathcal{A}^\pi_{S,M,\bdelta,k}$. Consider any arbitrary $\iota \in \{0,\ldots,\bar{\iota}\}$ that satisfies $(\bar{\blambda} + \iota \bdelta, \bar{\bmu} + \iota \bdelta) \in \mathcal{A}^\pi_{S,M,\bdelta,k} \cap \mathcal{L}_{S,M} \left( \bar{\blambda},\bar{\bmu},\bdelta \right) $ and any arbitrary $\iota' \in \{\iota + 1,\ldots,\min \{ \iota + M-1, \bar{\iota} \}\}$. It follows from line~\eqref{line:rewrite_A_L} that $p_k > \min_{\bsigma \in \textsc{Face}(\bar{\blambda} + \iota \bdelta,\bar{\bmu} + \iota \bdelta ,\bdelta)} \min_{\bx \in \mathcal{H}_S(\bsigma)} \pi(\bx) $, and it follows from Lemma~\ref{lem:not_too_close} that $\textsc{Face}(\blambda + \iota \bdelta,\bmu + \iota \bdelta,\bdelta) \subseteq \textsc{Face}(\blambda + \iota' \bdelta,\bmu + \iota' \bdelta,\bzero)$, which together imply that  $p_k > \min_{\bsigma \in \textsc{Face}(\bar{\blambda} + \iota' \bdelta,\bar{\bmu} + \iota' \bdelta ,\bzero)} \min_{\bx \in \mathcal{H}_S(\bsigma)} \pi(\bx)$. We thus conclude from line~\eqref{line:rewrite_A_L} that $(\bar{\blambda} + \iota' \bdelta, \bar{\bmu} + \iota' \bdelta) \notin \mathcal{A}^\pi_{S,M,\bdelta,k} \cap \mathcal{L}_{S,M} \left( \bar{\blambda},\bar{\bmu},\bdelta \right) $. More generally, we have shown that if two integers $\hat{\iota},\hat{\iota}' \in \{0,\ldots,\bar{\iota} \}$ satisfy $\hat{\iota} < \hat{\iota}'$ and $(\bar{\blambda} + \hat{\iota} \bdelta, \bar{\bmu} + \hat{\iota} \bdelta), (\bar{\blambda} + \hat{\iota}' \bdelta, \bar{\bmu} + \hat{\iota}' \bdelta) \in \mathcal{A}^\pi_{S,M,\bdelta,k} \cap \mathcal{L}_{S,M} \left( \bar{\blambda},\bar{\bmu},\bdelta \right)$, then it must be the case that $\hat{\iota}' - \hat{\iota} \ge M$. Therefore, we conclude that
\begin{align}
\left| \mathcal{A}^\pi_{S,M,\bdelta,k} \cap \mathcal{L}_{S,M} \left( \bar{\blambda},\bar{\bmu},\bdelta \right) \right| &\le \left \lceil \frac{\bar{\iota} + 1}{M} \right \rceil \le \left \lceil \frac{S}{M} \right \rceil, \label{line:bound_step4_2}
\end{align}
where the first inequality follows from our earlier reasoning that $\hat{\iota}' - \hat{\iota} \ge M$ for all $\hat{\iota},\hat{\iota}' \in \{0,\ldots,\bar{\iota} \}$ that satisfy $\hat{\iota} < \hat{\iota}'$ and $(\bar{\blambda} + \hat{\iota} \bdelta, \bar{\bmu} + \hat{\iota} \bdelta), (\bar{\blambda} + \hat{\iota}' \bdelta, \bar{\bmu} + \hat{\iota}' \bdelta) \in \mathcal{A}^\pi_{S,M,\bdelta,k} \cap \mathcal{L}_{S,M} \left( \bar{\blambda},\bar{\bmu},\bdelta \right)$, and the second inequality follows from our earlier observation that $\bar{\iota} \le S - 1$. 

Because the vector $\bdelta \in \{-1,0,1\}^D$ that satisfies $\bdelta \neq \bzero$, the $\bdelta$-anchor $(\bar{\blambda},\bar{\bmu}) \in \mathfrak{A}_{S,M}(\bdelta)$, and the integer $k \in \{1,\ldots,K\}$ were chosen arbitrarily, we conclude from lines~\eqref{line:bound_line}, \eqref{line:bound_step4_1}, and \eqref{line:bound_step4_2} that 
\begin{align*}
\left| \mathcal{A}^\pi_{S,M,\bdelta} \right| \le D S^{2D - 1} K \left \lceil \frac{S}{M} \right \rceil \le \frac{2 D S^{2D} K}{M},
\end{align*}
which completes the proof of Proposition~\ref{prop:bound_A_S_delta_omega}. 

\section{Numerical Experiment}\label{sec:numerics}
To illustrate the phase transition established by Theorem~\ref{thm:main}, we conduct numerical experiments that compare the non-strategic and strategic versions of a simple personalized pricing problem with two features, \textit{i.e.}, $D = 2$. The probability distribution for the buyer in our experiments is given by  $(\bbl,\bbu,V) = (\bbx - \bepsilon, \bbx + \bepsilon, V)$, where $\bbx$ is uniformly distributed over $[0.1,0.9]^2$, the radii $\bepsilon$ are chosen as either $\bepsilon = (0.09,0.09)$ or $\bepsilon = (0,0)$, and the buyer's valuation of the product is a deterministic function of $\bbl,\bbu$ of the form $V = L_1(1-L_1) + L_2(1-L_2) + U_1(1-U_1) + U_2(1-U_2).$ Our probability distribution is chosen such that the buyer has a higher (resp., lower) valuation if $\bbx$ is closer to the center (resp., the boundary) of $[0,1]^2$. The set of feasible prices for the seller is given by $\mathcal{P} = \{p_1,p_2 \}$ where $p_1 = 0.65$ and $p_2 = 0.83$. It can be readily verified that the above problem satisfies Assumptions~\ref{ass:iid}-\ref{ass:continuous} if  $\bepsilon = (0.09,0.09)$ and violates Assumption~\ref{ass:continuous} if $\bepsilon = (0,0)$. 

\begin{figure}[t]
\centering
\FIGURE{
\begin{minipage}{\linewidth}
\centering
\subfloat[]{
\includegraphics[width=0.45\textwidth]{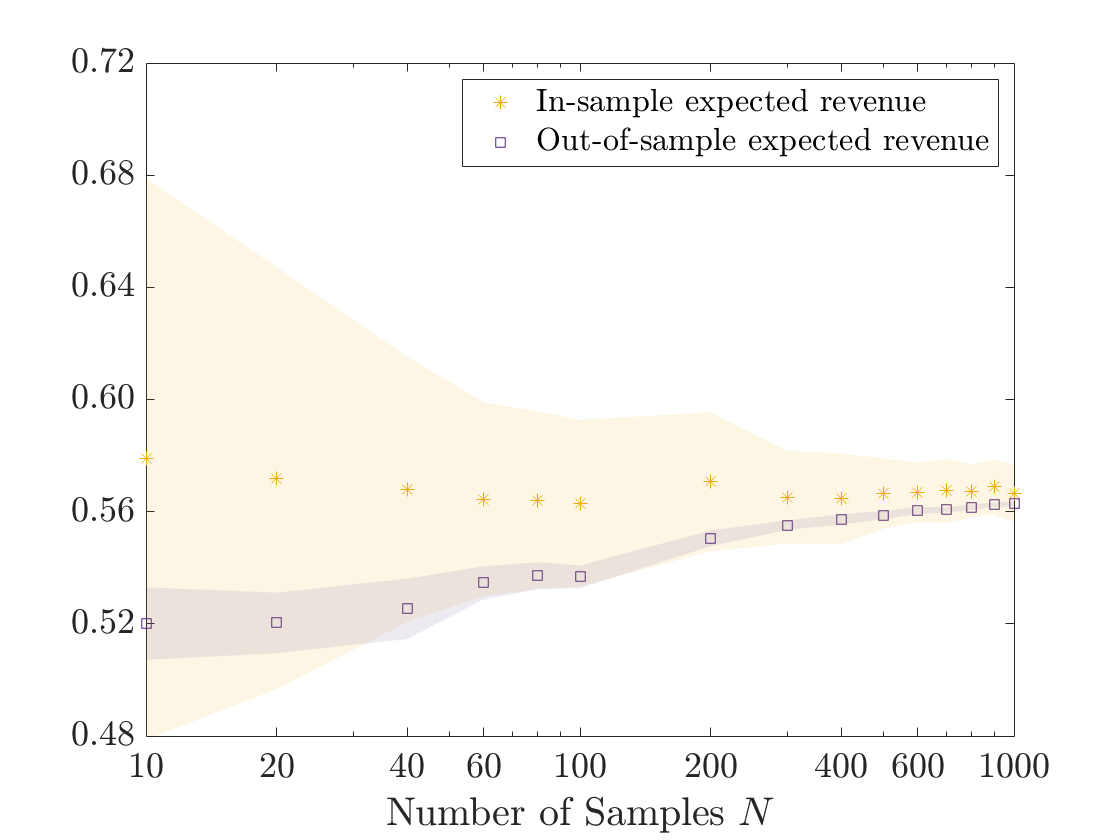}
\label{fig:str_performance}}
\subfloat[]{
\includegraphics[width=0.45\textwidth]{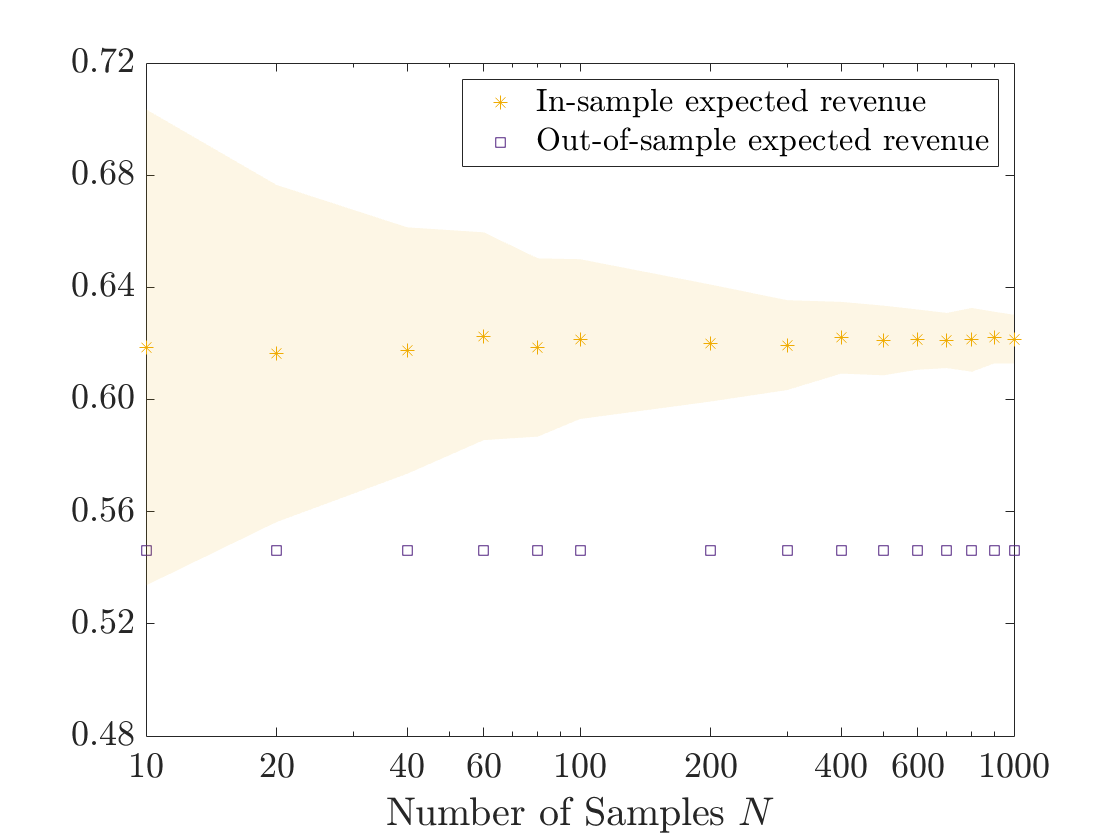}
\label{fig:non_performance}}
\end{minipage}
}
{Visualization of in-sample and out-of-sample performances.\\[-3em]\label{fig:numerical}}
{Each plot shows the (yellow) in-sample expected revenue $\widehat{J}_N(\hat{\pi}_N)$ and the (purple) out-of-sample expected revenue $J^*(\hat{\pi}_N)$, where $\hat{\pi}_N$ is an optimal solution for \eqref{prob:saa} with (\emph{a}) $\bepsilon = (0.09,0.09)$ and (\emph{b}) $\bepsilon = (0,0)$. The solid lines describe the mean statistics over $50$ random instances, whereas the shaded regions cover mean $\pm$ standard deviation. The experiment details for solving \eqref{prob:saa} can be found in Appendix~\ref{sec:MILP}.}
\end{figure}

In Figure~\ref{fig:numerical}, we visualize the phase transition between the strategic and non-strategic versions of~\eqref{prob:saa}. Specifically, let $\widehat{\pi}_N$ denote an optimal solution for \eqref{prob:saa}. 
In Figure~\ref{fig:numerical}a, we observe for the case of $\bepsilon = (0.09,0.09)$ that the in-sample expected revenue $\widehat{\nu}_N \equiv \widehat{J}_N(\widehat{\pi}_N)$ and the out-of-sample expected revenue $J^*(\widehat{\pi}_N)$ of the pricing policy $\widehat{\pi}_N$ converge to one another as the number of samples $N$ increases. That is, Figure~\ref{fig:numerical}a illustrates  Theorem~\ref{thm:main} by showing that $\widehat{\nu}_N$ converges to $\nu^*$. In contrast, Figure~\ref{fig:numerical}b shows for the case of $\bepsilon = (0,0)$ that the in-sample objective value of~\eqref{prob:saa} always overfits, even as the number of samples $N$ increases.

\section{Omitted Proofs from \S\ref{sec:proof:thm:main}} \label{sec:intermediary_lemmas}
\subsection{Relaxation of Assumption~\ref{ass:discrete}} \label{sec:proof:remove_ass}
To show that Assumption~\ref{ass:discrete} is unnecessary for the proof of Theorem~\ref{thm:main}, we prove the following:
\begin{proposition} \label{prop:finiteprices}
 Let Assumptions~\ref{ass:iid},  \ref{ass:bounded} hold. For each $K > 0$, there exist $p_1,\ldots,p_K \in \mathcal{P}$ such that\looseness=-1
 \begin{align*}
 0 \le \sup \limits_{\pi: [0,1]^D \to \mathcal{P}} \frac{1}{N} \sum \limits_{i=1}^N R^\pi\left(\bbl^i,\bbu^i,V^i \right) - \sup \limits_{\pi: [0,1]^D \to \{p_1,\ldots,p_K \}} \frac{1}{N} \sum \limits_{i=1}^N R^\pi\left(\bbl^i,\bbu^i,V^i \right) \le \frac{1}{K}
\end{align*}
for every integer $N >0$, almost surely, and
\begin{align*}
 0 \le \sup \limits_{\pi: [0,1]^D \to \mathcal{P}}\Exp \left[ R^\pi\left(\bbl,\bbu,V \right) \right] - \sup \limits_{\pi: [0,1]^D \to \{p_1,\ldots,p_K \}} \Exp \left[ R^\pi\left(\bbl,\bbu,V\right) \right] \le \frac{1}{K}.
\end{align*}
\end{proposition}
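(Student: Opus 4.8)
The plan is to construct the $K$ representative prices $p_1,\dots,p_K$ directly from $\mathcal{P}$, without reference to the data, and then to \emph{round every policy downward} onto this set while forfeiting at most $1/K$ of revenue on any individual buyer. To this end, partition $[0,1]$ into the $K$ subintervals $I_1 \triangleq [0, 1/K]$ and $I_j \triangleq \bigl((j-1)/K,\, j/K\bigr]$ for $j \in \{2,\dots,K\}$. For each $j$ with $\mathcal{P} \cap I_j \neq \emptyset$, the set $\mathcal{P} \cap \bigl[(j-1)/K,\, j/K\bigr]$ is compact by Assumption~\ref{ass:bounded}, hence has a minimum; put $p_j \triangleq \min\bigl(\mathcal{P} \cap [(j-1)/K,\, j/K]\bigr)$. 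For every remaining $j$, let $p_j$ be any element of the (nonempty) set $\mathcal{P}$. Define the Borel map $\rho: \mathcal{P} \to \{p_1,\dots,p_K\}$ by $\rho(p) \triangleq p_j$, where $j$ is the unique index with $p \in I_j$. Since $p \in \mathcal{P} \cap [(j-1)/K, j/K]$ whenever $p \in \mathcal{P}$, we obtain $p_j \le p$ and $p - p_j \le j/K - (j-1)/K = 1/K$; that is, $\rho(p) \le p \le \rho(p) + 1/K$ for every $p \in \mathcal{P}$. Consequently, for any measurable policy $\pi: [0,1]^D \to \mathcal{P}$, the composition $\widetilde\pi \triangleq \rho \circ \pi$ is a measurable policy valued in $\{p_1,\dots,p_K\}$ satisfying $\widetilde\pi(\bx) \le \pi(\bx) \le \widetilde\pi(\bx) + 1/K$ for all $\bx$, and taking infima over any hyperrectangle $[\bell,\bu]$ gives $\inf_{\bx \in [\bell,\bu]} \pi(\bx) - 1/K \le \inf_{\bx \in [\bell,\bu]} \widetilde\pi(\bx) \le \inf_{\bx \in [\bell,\bu]} \pi(\bx)$.

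Next I would prove the pointwise bound $R^{\widetilde\pi}(\bell,\bu,v) \ge R^{\pi}(\bell,\bu,v) - 1/K$ for every realization $(\bell,\bu,v) \in \mathcal{F} \times [0,1]$. Writing $p^\pi \triangleq \inf_{\bx \in [\bell,\bu]} \pi(\bx)$ and $p^{\widetilde\pi} \triangleq \inf_{\bx \in [\bell,\bu]} \widetilde\pi(\bx)$, the previous step gives $p^\pi - 1/K \le p^{\widetilde\pi} \le p^\pi$. If $p^\pi > v$, then $R^{\pi}(\bell,\bu,v) = 0 \le R^{\widetilde\pi}(\bell,\bu,v)$. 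If $p^\pi \le v$, then $p^{\widetilde\pi} \le p^\pi \le v$, so the buyer purchases under $\widetilde\pi$ and $R^{\widetilde\pi}(\bell,\bu,v) = p^{\widetilde\pi} \ge p^\pi - 1/K = R^{\pi}(\bell,\bu,v) - 1/K$. This settles both cases.

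Finally, I would assemble the claims. Applying the pointwise bound to each sample yields $\widehat{J}_N(\widetilde\pi) \ge \widehat{J}_N(\pi) - 1/K$ for every integer $N > 0$, every realization of the samples, and every $\pi: [0,1]^D \to \mathcal{P}$; taking suprema over $\pi$ (and noting $\widetilde\pi$ lies among the policies valued in $\{p_1,\dots,p_K\}$) gives $\sup_{\pi: [0,1]^D \to \mathcal{P}} \widehat{J}_N(\pi) - \sup_{\pi: [0,1]^D \to \{p_1,\dots,p_K\}} \widehat{J}_N(\pi) \le 1/K$, while $\{p_1,\dots,p_K\} \subseteq \mathcal{P}$ makes this difference nonnegative; this is the first claim (it holds for all $N$ simultaneously, hence almost surely). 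The identical argument with the expectation $\Exp[R^{\pi}(\bbl,\bbu,V)]$ in place of the sample average $\frac{1}{N}\sum_{i=1}^N R^{\pi}(\bbl^i,\bbu^i,V^i)$ — legitimate by monotonicity of expectation since $R^{\pi},R^{\widetilde\pi}\in[0,1]$ — yields the second claim; crucially, since $\{p_1,\dots,p_K\}$ was built without using the data, the same choice validates both inequalities. I do not expect a serious obstacle here; the two points requiring care are that the rounding must be \emph{downward} (rounding to the nearest grid price could forfeit an entire sale instead of $1/K$ of it) and that each representative $p_j$ genuinely lies in $\mathcal{P}$, which is precisely where the closedness of $\mathcal{P}$ from Assumption~\ref{ass:bounded} enters, together with the (routine) measurability of $R^{\widetilde\pi}$.
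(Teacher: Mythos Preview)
Your proof is correct and follows essentially the same approach as the paper: construct a finite price set in $\mathcal{P}$ with a ``round-down within $1/K$'' property (the paper's Lemma~\ref{lem:finiteprices}), deduce the pointwise revenue inequality $R^{\widetilde\pi}\ge R^\pi-1/K$ (the paper's Lemma~\ref{lem:finiteprices2}), and then take suprema. The only cosmetic differences are that the paper places its grid on $[\inf\mathcal{P},\sup\mathcal{P}]$ and defines $p_k=\inf\{p\in\mathcal{P}:p\ge\ubar{p}+(\bar p-\ubar p)(k-1)/K\}$ rather than partitioning $[0,1]$ and taking $\min(\mathcal{P}\cap[(j-1)/K,j/K])$, and that the paper passes through an $\eta$-optimal policy before taking suprema whereas you take the supremum directly via the explicit rounding map $\rho$; neither difference affects the argument.
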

The above proposition implies that Assumption~\ref{ass:discrete} is unnecessary for the proof of Theorem~\ref{thm:main} because the proposition shows, for every arbitrary $K > 0$, that there exist prices $p_1,\ldots,p_K \in \mathcal{P}$ such that \eqref{prob:opt} and \eqref{prob:saa} can be approximated within a $\frac{1}{K}$-additive error by restricting to pricing policies of the form $\pi: [0,1]^D \to \{p_1,\ldots,p_K\}$. 
To prove Proposition~\ref{prop:finiteprices}, we use the following lemmas. 
\begin{lemma} \label{lem:finiteprices}
Let Assumption~\ref{ass:bounded} hold. For every integer $K > 0$, there exist prices $p_1,\ldots,p_K \in \mathcal{P}$ such that for every $p \in \mathcal{P}$, there exists $k \in \{1,\ldots,K\}$ such that 
$0 \le p - p_k \le \frac{1}{K}$.
\end{lemma}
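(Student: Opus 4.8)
\textbf{Proof proposal for Lemma~\ref{lem:finiteprices}.}

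The plan is to construct the prices $p_1,\dots,p_K$ by a simple ``ceiling-to-a-grid'' argument. Fix an integer $K>0$. Consider the uniform grid points $\frac{0}{K},\frac{1}{K},\dots,\frac{K}{K}$ on $[0,1]$. For each grid interval $\left(\frac{j-1}{K},\frac{j}{K}\right]$ with $j\in\{1,\dots,K\}$, if $\mathcal{P}$ has nonempty intersection with that interval, I would let $q_j \triangleq \inf\left(\mathcal{P}\cap\left(\frac{j-1}{K},\frac{j}{K}\right]\right)$; since $\mathcal{P}$ is closed (Assumption~\ref{ass:bounded}) and the infimum is over a set bounded below, the infimum is attained, so $q_j\in\mathcal{P}$. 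I would also handle the point $0$ separately: if $0\in\mathcal{P}$, include it. Collecting all such $q_j$ (together with $0$ if applicable) gives a finite list of at most $K+1$ prices in $\mathcal{P}$; if this list has fewer than $K$ elements, pad it with arbitrary repetitions of one of its members to obtain exactly $K$ prices $p_1,\dots,p_K\in\mathcal{P}$. (If $\mathcal{P}=\emptyset$ the statement is vacuous, but Assumption~\ref{ass:bounded} rules this out.)

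The verification step is then routine: take any $p\in\mathcal{P}\subseteq[0,1]$. If $p=0$, then $0\in\mathcal{P}$ was included, so some $p_k=0=p$ and $0\le p-p_k=0\le\frac1K$. Otherwise $p\in\left(\frac{j-1}{K},\frac{j}{K}\right]$ for the unique $j\in\{1,\dots,K\}$ with $j=\lceil Kp\rceil$. That interval contains $p$, so $\mathcal{P}\cap\left(\frac{j-1}{K},\frac{j}{K}\right]\neq\emptyset$, hence $q_j$ was defined and equals some $p_k$ in our list. By definition of the infimum, $p_k=q_j\le p$, so $p-p_k\ge 0$; and since both $p_k$ and $p$ lie in the half-open interval of length $\frac1K$, we get $p-p_k\le\frac{j}{K}-\frac{j-1}{K}=\frac1K$. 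This establishes the claim.

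I do not anticipate a genuine obstacle here; the only subtlety worth stating carefully is the use of closedness of $\mathcal{P}$ to guarantee that the infimum $q_j$ is actually a member of $\mathcal{P}$ rather than merely a limit point, and the bookkeeping needed to end up with exactly $K$ prices (which the padding step resolves, since duplicates do no harm — the covering property only requires \emph{existence} of some index $k$). One could alternatively phrase the construction as $p_k\triangleq\min\{p\in\mathcal{P}: p\ge \text{(some target)}\}$, but the grid-based description above makes the $\frac1K$ bound transparent and avoids any case analysis beyond the $p=0$ boundary case.
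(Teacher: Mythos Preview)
Your approach is essentially the paper's: partition $[0,1]$ by a grid of width $\tfrac1K$ and, in each cell that meets $\mathcal{P}$, take the infimum (which lies in $\mathcal{P}$ by closedness). The paper does precisely this on the interval $[\min\mathcal{P},\max\mathcal{P}]$ via the formula you mention at the end, $p_k=\inf\{p\in\mathcal{P}:p\ge \underline p+(\bar p-\underline p)\tfrac{k-1}{K}\}$, which automatically yields exactly $K$ prices.

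There is one small bookkeeping slip in your construction. You note the list has at most $K+1$ prices (the $K$ infima plus possibly $0$) and then only explain how to pad when there are \emph{fewer} than $K$; you do not say what to do when there are $K+1$, and in that case you cannot simply drop an arbitrary element (e.g., $\mathcal{P}=\{0,\tfrac1K,\dots,1\}$ forces $0$ to appear). The cleanest fix is to replace your first half-open cell $(0,\tfrac1K]$ by the closed cell $[0,\tfrac1K]$: then $q_1=\inf(\mathcal{P}\cap[0,\tfrac1K])$ already covers $p=0$ and the separate $0$ case disappears, giving at most $K$ prices. Also, your sentence ``both $p_k$ and $p$ lie in the half-open interval'' is not literally true (the infimum $q_j$ can equal the excluded left endpoint $\tfrac{j-1}{K}$), but the inequality $p-q_j\le\tfrac1K$ still holds since $q_j\ge\tfrac{j-1}{K}$; it is only the phrasing that needs tightening.
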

\begin{proof}{Proof of Lemma~\ref{lem:finiteprices}.}
Consider any integer $K > 0$. The Weierstrass extreme value theorem, in combination with the fact that the set $\mathcal{P} \neq \emptyset$ is closed and bounded (Assumption~\ref{ass:bounded}), implies that any continuous real-valued function over $\mathcal{P}$ must attain a maximum and minimum. Hence, we have that the quantities $\ubar{p},\bar{p},p_1,\ldots,p_K$ defined below exist and are elements of $\mathcal{P}$:
\begin{align*}
 \ubar{p} \triangleq \inf_{p \in \mathcal{P}} p;\quad \bar{p} \triangleq \sup_{p \in \mathcal{P}} p; \quad 
 p_k \triangleq \inf \left \{ p \in \mathcal{P}: p \ge \ubar{p} + ( \bar{p} - \ubar{p}) \frac{k-1}{{K}} \right \} \quad \forall k \in \{1,\ldots,K\}. 
\end{align*}
Now consider any arbitrary $p \in \mathcal{P}$. It follows from the fact that $p \in \mathcal{P} \subseteq [\ubar{p},\bar{p}]$ that there must exist some $k \in \{1,\ldots,K\}$ such that $
 \ubar{p} + ( \bar{p} - \ubar{p}) \frac{k-1}{{K}} \le p \le \ubar{p} + ( \bar{p} - \ubar{p}) \frac{k}{{K}}.$ 
For that $k$, we observe that $p_k = \inf \left \{ p' \in \mathcal{P}: \ubar{p} + ( \bar{p} - \ubar{p}) \frac{k-1}{K} \le p' \right \}\le p,$  where the inequality follows from the fact that $ \ubar{p} + ( \bar{p} - \ubar{p}) \frac{k-1}{{K}} \le p$ and from the fact that $p \in \mathcal{P}$. We have thus shown that $
 p_k \le p \le \ubar{p} + ( \bar{p} - \ubar{p}) \frac{k}{K}$, 
which implies that 
\begin{align}
0 \le p - p_k &\le \ubar{p} + ( \bar{p} - \ubar{p}) \frac{k}{{K}} - p_k \notag \\
&\le \ubar{p} + ( \bar{p} - \ubar{p}) \frac{k}{K} - \left( \ubar{p} + ( \bar{p} - \ubar{p}) \frac{k - 1}{K} \right) \label{line:substitute_p_ell}\\
&= \left(\bar{p} - \ubar{p} \right) \frac{1}{K} \notag \\
&\le \frac{1}{K}, \label{line:pbar_bounded} 
\end{align}
where line~\eqref{line:substitute_p_ell} follows from the fact that $p_k \ge \ubar{p} + (\bar{p} - \ubar{p}) \frac{k-1}{K}$ and line~\eqref{line:pbar_bounded} follows from the fact that $\mathcal{P} \subseteq [0,1]$. Since $p \in \mathcal{P}$ was chosen arbitrarily, our proof of Lemma~\ref{lem:finiteprices} is complete. 
\hfill \halmos 
\end{proof}

\begin{lemma} \label{lem:finiteprices2}
Let Assumption \ref{ass:bounded} hold. For every integer $K > 0$, there exist prices $p_1,\ldots,p_K \in \mathcal{P}$ such that for every pricing policy $\pi: [0,1]^D \to \mathcal{P}$, there exists a pricing policy $\pi': [0,1]^D \to \{p_1,\ldots,p_K\}$  such that for all $(\bell,\bu,v) \in \mathcal{F} \times [0,1]$ we have $R^{\pi'}(\bell,\bu,v) \ge R^{\pi}(\bell,\bu,v) - \frac{1}{K}$.
\end{lemma}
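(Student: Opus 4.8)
The plan is to reduce the lemma to a pointwise ``rounding-down'' argument built on Lemma~\ref{lem:finiteprices}. First I would apply Lemma~\ref{lem:finiteprices} to obtain prices $p_1,\ldots,p_K \in \mathcal{P}$ such that every $p \in \mathcal{P}$ admits an index $k$ with $0 \le p - p_k \le \frac{1}{K}$. With these prices fixed, I would define a rounding map $\rho:\mathcal{P}\to\{p_1,\ldots,p_K\}$ by $\rho(p) \triangleq \max\{p_k : p_k \le p\}$; this is well defined because Lemma~\ref{lem:finiteprices} guarantees that the set $\{p_k : p_k \le p\}$ is nonempty, and $\rho$ is a nondecreasing step function of $p$, hence Borel measurable. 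The two properties I would extract are $\rho(p)\le p$ and $p-\rho(p)\le\frac{1}{K}$ for every $p\in\mathcal{P}$, both immediate from the construction.

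Next, given an arbitrary pricing policy $\pi:[0,1]^D\to\mathcal{P}$, I would set $\pi' \triangleq \rho\circ\pi$, which is a measurable pricing policy of the required form $\pi':[0,1]^D\to\{p_1,\ldots,p_K\}$ and satisfies $\pi(\bx)-\frac{1}{K}\le\pi'(\bx)\le\pi(\bx)$ for all $\bx\in[0,1]^D$. Fixing an arbitrary buyer realization $(\bell,\bu,v)\in\mathcal{F}\times[0,1]$ and writing $q\triangleq\inf_{\bx\in[\bell,\bu]}\pi(\bx)$ and $q'\triangleq\inf_{\bx\in[\bell,\bu]}\pi'(\bx)$, taking infima of the pointwise bounds gives $q-\frac{1}{K}\le q'\le q$.

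The proof would then conclude with a short case split on whether the buyer purchases under $\pi$. If $q>v$, then $R^\pi(\bell,\bu,v)=0$ while $R^{\pi'}(\bell,\bu,v)\ge 0$ since prices lie in $[0,1]$ (Assumption~\ref{ass:bounded}), so $R^{\pi'}(\bell,\bu,v)\ge R^\pi(\bell,\bu,v)-\frac{1}{K}$ trivially. If $q\le v$, then $q'\le q\le v$, so the buyer also purchases under $\pi'$ and $R^{\pi'}(\bell,\bu,v)=q'\ge q-\frac{1}{K}=R^\pi(\bell,\bu,v)-\frac{1}{K}$. Since $(\bell,\bu,v)$ is arbitrary, the lemma follows. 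I do not expect a genuine obstacle here; the one design choice that matters is rounding \emph{down} rather than to the nearest chosen price, since $q'\le q$ is precisely what ensures that a sale under $\pi$ is never lost under $\pi'$ — rounding to the nearest price could raise the offered price above $v$ and destroy the sale.
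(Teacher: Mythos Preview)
Your proposal is correct and follows essentially the same approach as the paper: both invoke Lemma~\ref{lem:finiteprices}, round $\pi$ down pointwise to obtain $\pi'$ with $0\le\pi(\bx)-\pi'(\bx)\le\frac{1}{K}$, and then use $q'\le q$ to preserve the sale and $q'\ge q-\frac{1}{K}$ to bound the revenue loss. The only cosmetic differences are that you make the rounding map $\rho$ explicit (and thereby handle measurability cleanly) and argue by a case split on $q\le v$ versus $q>v$, whereas the paper runs the same logic as a single chain of inequalities on $R^\pi$.
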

\begin{proof}{Proof of Lemma~\ref{lem:finiteprices2}.}
Consider any integer $K > 0$, and let $p_1,\ldots,p_K \in \mathcal{P}$ be the prices obtained from Lemma~\ref{lem:finiteprices}. Consider any pricing policy $\pi: [0,1]^D \to \mathcal{P}$. It follows from Lemma~\ref{lem:finiteprices} and from the fact that $\pi: [0,1]^D \to \mathcal{P}$ that there exists a pricing  policy $\pi': [0,1]^D \to \{p_1,\ldots,p_K\}$  satisfying $ 0 \le \pi(\bx) - \pi'(\bx) \le \frac{1}{K}$ for all $\bx \in [0,1]^D$. Therefore, for all $(\bell,\bu,v) \in \mathcal{F} \times [0,1]$ we have 
\begin{align*}
R^\pi\left(\bell,\bu,v \right) &= \mathbb{I} \left \{ \inf_{\bx \in [\bell,\bu]} \pi (\bx ) \le v \right \} \cdot \inf_{\bx \in [\bell,\bu]} \pi (\bx )
\le \mathbb{I} \left \{ \inf_{\bx \in [\bell,\bu]} \pi' (\bx ) \le v \right \} \cdot \inf_{\bx \in [\bell,\bu]} \pi (\bx )\\
&\le \mathbb{I} \left \{ \inf_{\bx \in [\bell,\bu ]} \pi' (\bx ) \le v \right \} \cdot \inf_{\bx \in [\bell,\bu]} \left\{ \pi' (\bx ) + \frac{1}{K} \right \} 
\le \frac{1}{K} + \mathbb{I} \left \{ \inf_{\bx \in [\bell,\bu]} \pi' (\bx ) \le v \right \} \cdot \inf_{\bx \in [\bell,\bu] } \pi' (\bx ) \\
&= \frac{1}{K} + R^{\pi'}\left(\bell,\bu,v \right). 
\end{align*}
The first equality is the definition of $R^{\pi}(\bell,\bu,v)$. The first inequality holds because $0 \le \pi(\bx) - \pi'(\bx)$. The second inequality holds because $\pi(\bx) - \pi'(\bx) \le \frac{1}{K}$. The third inequality follows from algebra, and the second equality follows from the definition of $R^{\pi'}(\bell,\bu,v).$ 
\hfill \halmos 
\end{proof}

We conclude \S\ref{sec:proof:remove_ass} by using Lemma~\ref{lem:finiteprices2} to prove Proposition~\ref{prop:finiteprices}. 
\begin{proof}{Proof of Proposition~\ref{prop:finiteprices}.}
Consider any arbitrary integer $K > 0$, and let $p_1,\ldots,p_K \in \mathcal{P}$ be the prices obtained from Lemma~\ref{lem:finiteprices2}. For each arbitrary integer $N > 0$, it follows from algebra that 
\begin{align*}
0 \le \sup \limits_{\pi: [0,1]^D \to \mathcal{P}} \frac{1}{N} \sum \limits_{i=1}^N R^\pi\left(\bbl^i,\bbu^i,V^i \right) - \sup \limits_{\pi: [0,1]^D \to \{p_1,\ldots,p_K \}} \frac{1}{N} \sum \limits_{i=1}^N R^\pi\left(\bbl^i,\bbu^i,V^i \right).
\end{align*}
Moreover, choose any arbitrary $\eta > 0$, and let $\widehat{\pi}_{N,\eta}: [0,1]^D \to \mathcal{P}$ denote a pricing policy that satisfies
\begin{align*}
\frac{1}{N} \sum \limits_{i=1}^N R^{\widehat{\pi}_{N,\eta}}\left(\bbl^i,\bbu^i,V^i \right) \geq \sup \limits_{\pi: [0,1]^D \to \mathcal{P}} \frac{1}{N} \sum \limits_{i=1}^N R^\pi\left(\bbl^i,\bbu^i,V^i \right) - \eta.
\end{align*}
It follows from Lemma~\ref{lem:finiteprices2} that there exists a pricing policy $\widehat{\pi}_{N,\eta}': [0,1]^D \to \{p_1,\ldots,p_K\}$ that satisfies 
$R^{\widehat{\pi}'_{N,\eta}}(\bell,\bu,v) \ge R^{\widehat{\pi}_{N,\eta}}(\bell,\bu,v) - \frac{1}{K}$ for all $(\bell,\bu,v) \in \mathcal{F} \times [0,1]$. Therefore,
\begin{align*}
\sup \limits_{\pi: [0,1]^D \to \mathcal{P}} \frac{1}{N} \sum \limits_{i=1}^N R^\pi\left(\bbl^i,\bbu^i,V^i \right) &\le \eta + \frac{1}{N} \sum \limits_{i=1}^N R^{\widehat{\pi}_{N,\eta}}\left(\bbl^i,\bbu^i,V^i \right) \\
&\le \eta + \frac{1}{K} + \frac{1}{N} \sum \limits_{i=1}^N R^{\widehat{\pi}'_{N,\eta}}\left(\bbl^i,\bbu^i,V^i \right) \text{ almost surely}\\
&\le \eta + \frac{1}{K} + \sup_{\pi: [0,1]^D \to \{p_1,\ldots,p_K \}} \frac{1}{N} \sum \limits_{i=1}^N R^{\pi}\left(\bbl^i,\bbu^i,V^i \right).
\end{align*}
The first inequality follows from the definition of $\widehat{\pi}_{N,\eta}$. The second inequality follows from the fact that $(\bbl^1,\bbu^1,V^1),\ldots,(\bbl^N,\bbu^N,V^N) \in \mathcal{F} \times [0,1]$ almost surely (Assumption~\ref{ass:iid}). The third inequality follows from the fact that $\widehat{\pi}'_{N,\eta}$ is a feasible but possibly suboptimal solution for the optimization problem $\sup_{\pi: [0,1]^D \to \{p_1,\ldots,p_K \}} \frac{1}{N} \sum \limits_{i=1}^N R^{\pi}\left(\bbl^i,\bbu^i,V^i \right).$ Since $N > 0$ and $\eta > 0$ were chosen arbitrarily, we conclude that the following inequality holds almost surely for every integer $N > 0$:
\begin{align*}
\sup \limits_{\pi: [0,1]^D \to \mathcal{P}} \frac{1}{N} \sum \limits_{i=1}^N R^\pi\left(\bbl^i,\bbu^i,V^i \right) - \sup \limits_{\pi: [0,1]^D \to \{p_1,\ldots,p_K \}} \frac{1}{N} \sum \limits_{i=1}^N R^\pi\left(\bbl^i,\bbu^i,V^i \right) \le \frac{1}{K}. \end{align*}
An identical reasoning is used to show the bound for the expectations. 
\hfill \halmos 
\end{proof}

\subsection{Proofs of Intermediary Lemmas} \label{sec:proof:lemmas}
\begin{proof}{Proof of Lemma~\ref{lem:characterization_customers}.}
Consider any $(\blambda,\bmu) \in \mathcal{A}^\pi_{S}$. We use the following two intermediary claims.
\begin{claim} \label{claim:characterization_customers:interior}
If $(\bell,\bu,v) \in \mathcal{C}_S(\blambda,\bmu)$, then $\cup_{\bsigma \in \textsc{Face}(\blambda,\bmu,\bzero)} \mathcal{H}_S(\bsigma) \subseteq [\bell,\bu]$.
\end{claim}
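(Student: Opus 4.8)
The plan is to establish Claim~\ref{claim:characterization_customers:interior} by a direct geometric argument: if a buyer realization $(\bell,\bu,v)$ is mapped to bucket $(\blambda,\bmu)$, then $\bell \in \mathcal{H}_S(\blambda)$ and $\bu \in \mathcal{H}_S(\bmu)$, and I want to show that the hyperrectangle $[\bell,\bu]$ swallows every hypercube indexed by the interior face $\textsc{Face}(\blambda,\bmu,\bzero)$. First I would reduce to a coordinatewise statement: since both $[\bell,\bu] = \times_{d=1}^D [\ell_d,u_d]$ and $\cup_{\bsigma \in \textsc{Face}(\blambda,\bmu,\bzero)} \mathcal{H}_S(\bsigma) = \times_{d=1}^D \left( \cup_{\sigma_d = \lambda_d+1}^{\mu_d - 1} \mathcal{H}_{S,d}(\sigma_d) \right)$ are Cartesian products, it suffices to show for each coordinate $d$ that $\cup_{\sigma_d = \lambda_d+1}^{\mu_d-1} \mathcal{H}_{S,d}(\sigma_d) \subseteq [\ell_d, u_d]$.

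For the coordinatewise inclusion, I would argue as follows. Since $\ell_d \in \mathcal{H}_{S,d}(\lambda_d)$, we have $\ell_d < \lambda_d / S$ (using the half-open structure of $\mathcal{H}_{S,d}$ for $d < D$; for $d = D$ the endpoint case $\ell_d = \lambda_d/S$ is handled separately but gives the same conclusion, or one observes that if $\ell_d = \lambda_d/S$ then $\ell_d$ actually lies in $\mathcal{H}_{S,D}(\lambda_d)$ only via the closed right endpoint, and a careful check of the index assignment $\bsigma_S$ still yields $\ell_d \le \lambda_d/S$). In all cases $\ell_d \le \lambda_d / S$. Likewise $u_d \in \mathcal{H}_{S,d}(\mu_d)$ gives $u_d \ge (\mu_d - 1)/S$. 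Now any point $x$ in $\cup_{\sigma_d=\lambda_d+1}^{\mu_d-1}\mathcal{H}_{S,d}(\sigma_d)$ satisfies $\lambda_d/S \le (\lambda_d+1-1)/S \le x < (\mu_d-1)/S \le u_d$ and $x \ge \lambda_d/S \ge \ell_d$, hence $x \in [\ell_d,u_d]$. (The union is empty when $\mu_d = \lambda_d + 1$, in which case there is nothing to prove for that coordinate.) Taking the product over all $d$ gives the claim.

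The main obstacle, such as it is, is purely bookkeeping: being careful about the half-open versus closed intervals $\mathcal{H}_{S,d}(\sigma_d)$ (open on the right for $d<D$, closed for $d=D$) and about the edge case where the interior face is empty in some coordinate, i.e. $\mu_d = \lambda_d + 1$. Neither introduces any genuine difficulty — the inequalities $\ell_d \le \lambda_d/S$ and $u_d \ge (\mu_d-1)/S$ hold regardless of which convention applies — so the proof is short once the reduction to coordinates is made. I would then presumably use this claim together with a companion claim about the complementary (boundary) faces to derive the contrapositive of Lemma~\ref{lem:characterization_customers}: if the displayed min-inequality fails, then $R^\pi$ and $R^{T_S\circ\pi}$ agree on all of $\mathcal{C}_S(\blambda,\bmu)$, forcing $(\blambda,\bmu) \notin \mathcal{A}^\pi_S$.
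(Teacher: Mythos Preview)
Your proof is correct and follows essentially the same route as the paper: both arguments reduce to the coordinatewise inequalities $\ell_d \le \lambda_d/S$ and $u_d \ge (\mu_d-1)/S$ obtained from $\bell \in \mathcal{H}_S(\blambda)$, $\bu \in \mathcal{H}_S(\bmu)$, and then observe that every $\mathcal{H}_{S,d}(\sigma_d)$ with $\sigma_d \in \{\lambda_d+1,\ldots,\mu_d-1\}$ lies in $[\lambda_d/S,(\mu_d-1)/S] \subseteq [\ell_d,u_d]$. The only cosmetic slip is that your strict bound $x < (\mu_d-1)/S$ should be weak when $d=D$, but since you only need $x \le u_d$ this does not affect the argument.
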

\begin{proof}{Proof of Claim~\ref{claim:characterization_customers:interior}.}
It follows from the definition of $\mathcal{C}_S(\blambda,\bmu)$ that the following holds for all buyer realizations $(\bell,\bu,v) \in \mathcal{C}_S(\blambda,\bmu)$ and every $d \in \{1,\ldots,D\}$:
\begin{align}
\ell_d \in \mathcal{H}_{S,d}(\lambda_d) \subseteq \left[ \frac{\lambda_d - 1}{S}, \frac{\lambda_d}{S} \right] \text{ and } u_d \in \mathcal{H}_{S,d}(\mu_d) \subseteq \left[ \frac{\mu_d - 1}{S}, \frac{\mu_d}{S} \right]. \label{line:characterization:inclusion} 
\end{align}
Thus, for every $(\bell,\bu,v) \in \mathcal{C}_S(\blambda,\bmu)$, $\bsigma \in \textsc{Face}(\blambda,\bmu,\bzero)$, and $d \in \{1,\ldots,D\}$, we have 
\begin{align}
\ell_d \le \frac{\lambda_d}{S} \le \frac{\sigma_d - 1}{S} 
\text{ and } 
u_d \ge \frac{\mu_d - 1}{S} \ge \frac{\sigma_d}{S}, \label{line:bound_ell_and_u}
\end{align}
where the first inequalities for $\ell_d$ and $u_d$ follow from \eqref{line:characterization:inclusion}, and the second inequalities follow from the fact that $\bsigma \in \textsc{Face}(\blambda,\bmu,\bzero)$ (which implies that $\sigma_d \in \{\lambda_d + 1,\ldots,\mu_d - 1 \}$). We conclude from \eqref{line:bound_ell_and_u} that for every buyer realization $(\bell,\bu,v) \in \mathcal{C}_S(\blambda,\bmu)$, $\bsigma \in \textsc{Face}(\blambda,\bmu,\bzero)$, and $d \in \{1,\ldots,D\}$, it must be the case that $\mathcal{H}_{S,d}(\sigma_d) \subseteq \left[ \frac{\sigma_d - 1}{S}, \frac{\sigma_d}{S}\right] \subseteq \left[ \ell_d,u_d \right].$ 
We have therefore shown for every buyer realization $(\bell,\bu,v) \in \mathcal{C}_S(\blambda,\bmu)$ and $\bsigma \in \textsc{Face}(\blambda,\bmu,\bzero)$ that $\mathcal{H}_{S}(\bsigma) \subseteq \left[ \bell,\bu \right].$
Since this reasoning holds for every $\bsigma \in \textsc{Face}(\blambda,\bmu,\bzero),$ our proof of Claim~\ref{claim:characterization_customers:interior} is complete. 
\hfill \halmos 
\end{proof}

\begin{claim} \label{claim:characterization_customers:boundary}
If $(\bell,\bu,v) \in \mathcal{C}_S(\blambda,\bmu)$, then $[\bell,\bu] \subseteq \cup_{\bsigma \in \textsc{Rect}(\blambda,\bmu)} \mathcal{H}_S(\bsigma) $.
\end{claim}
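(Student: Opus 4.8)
The plan is to exploit the product structure of both $[\bell,\bu]$ and $\cup_{\bsigma \in \textsc{Rect}(\blambda,\bmu)}\mathcal{H}_S(\bsigma)$, which reduces the desired inclusion to a one-dimensional statement in each coordinate $d \in \{1,\ldots,D\}$. First I would record the identity $\cup_{\bsigma \in \textsc{Rect}(\blambda,\bmu)} \mathcal{H}_S(\bsigma) = \times_{d=1}^D \big( \cup_{j=\lambda_d}^{\mu_d} \mathcal{H}_{S,d}(j) \big)$, which is immediate from $\mathcal{H}_S(\bsigma) = \times_{d=1}^D \mathcal{H}_{S,d}(\sigma_d)$ and $\textsc{Rect}(\blambda,\bmu) = \times_{d=1}^D \{\lambda_d,\ldots,\mu_d\}$. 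Next, from the explicit definition of $\mathcal{H}_{S,d}$, I would observe that the consecutive intervals $\mathcal{H}_{S,d}(\lambda_d),\ldots,\mathcal{H}_{S,d}(\mu_d)$ telescope: their union equals $\big[\frac{\lambda_d-1}{S},\frac{\mu_d}{S}\big)$ when $d < D$ and $\big[\frac{\lambda_d-1}{S},\frac{\mu_d}{S}\big]$ when $d = D$.

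With these two observations in hand, I would use the hypothesis $(\bell,\bu,v) \in \mathcal{C}_S(\blambda,\bmu)$, which by definition gives $\ell_d \in \mathcal{H}_{S,d}(\lambda_d)$ and $u_d \in \mathcal{H}_{S,d}(\mu_d)$ for every $d$. Reading off the endpoints, $\ell_d \ge \frac{\lambda_d-1}{S}$ (the left end of $\mathcal{H}_{S,d}(\lambda_d)$), while $u_d < \frac{\mu_d}{S}$ if $d < D$ and $u_d \le \frac{\mu_d}{S}$ if $d = D$. In either case $[\ell_d,u_d]$ is contained in the telescoped union $\cup_{j=\lambda_d}^{\mu_d}\mathcal{H}_{S,d}(j)$. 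Taking the Cartesian product over $d$ then yields $[\bell,\bu] = \times_{d=1}^D [\ell_d,u_d] \subseteq \times_{d=1}^D \big(\cup_{j=\lambda_d}^{\mu_d}\mathcal{H}_{S,d}(j)\big) = \cup_{\bsigma \in \textsc{Rect}(\blambda,\bmu)}\mathcal{H}_S(\bsigma)$, which is exactly the claim.

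I do not expect a real obstacle here; the only delicate point is the half-open-versus-closed convention at the right endpoint, and it works out cleanly precisely because the telescoped interval is open on the right when $d < D$ (matching the strict inequality $u_d < \frac{\mu_d}{S}$ coming from $u_d \in \mathcal{H}_{S,d}(\mu_d)$) and closed when $d = D$. So the argument is essentially a bookkeeping exercise once the product decomposition and the telescoping of grid intervals are set up.
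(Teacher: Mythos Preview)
Your proof is correct and follows essentially the same coordinate-wise argument as the paper: reduce to showing $[\ell_d,u_d] \subseteq \cup_{j=\lambda_d}^{\mu_d}\mathcal{H}_{S,d}(j)$ for each $d$, identify that union as a single interval via telescoping, and take the Cartesian product. The only cosmetic difference is that the paper's proof splits the right-endpoint case on whether $\mu_d = S$ rather than on whether $d = D$; your version matches the literal definition of $\mathcal{H}_{S,d}$ given in the paper and works equally well.
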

\begin{proof}{Proof of Claim~\ref{claim:characterization_customers:boundary}.}
It follows from the definition of $\mathcal{C}_S(\blambda,\bmu)$ that the following holds for all buyer realizations $(\bell,\bu,v) \in \mathcal{C}_S(\blambda,\bmu)$ and every $d \in \{1,\ldots,D\}$:
\begin{align}
\frac{\lambda_d - 1}{S} \le \ell_d \text{ and } \begin{cases} u_d < \frac{\mu_d}{S} &\text{if } \mu_d < S \\
u_d \le \frac{\mu_d}{S} &\text{if } \mu_d = S.
\end{cases} \label{line:characterization:inclusion2} 
\end{align}
Thus, it follows from the fact that $\ell_d \le u_d$ that  
\begin{align*}
\left[ \ell_d, u_d \right] &\subseteq \begin{cases} \left[\frac{\lambda_d-1}{S}, \frac{\mu_d}{S}\right) &\text{if } \mu_d < S \\
\left[\frac{\lambda_d-1}{S}, \frac{\mu_d}{S}\right] &\text{if } \mu_d = S
\end{cases} \\
&= \mathcal{H}_{S,d}(\lambda_d) \cup \cdots \cup \mathcal{H}_{S,d}(\mu_d).
\end{align*}
Hence, we conclude for every buyer realization $(\bell,\bu,v) \in \mathcal{C}_S(\blambda,\bmu)$ that
\begin{align*}
[\bell,\bu] &= \bigtimes_{d=1}^D \left[\ell_d,u_d \right] \subseteq \bigtimes_{d=1}^D \bigcup_{\sigma_d \in \left\{\lambda_d,\ldots,\mu_d \right \}} \mathcal{H}_{S,d}\left(\sigma_d \right) = \bigcup_{\bsigma \in \textsc{Rect}(\bell,\bu)} \mathcal{H}_S(\bsigma),
\end{align*}
which completes the proof of Claim~\ref{claim:characterization_customers:boundary}. 
\hfill \halmos 
\end{proof}

Equipped with Claims~\ref{claim:characterization_customers:interior} and \ref{claim:characterization_customers:boundary}, we proceed to complete the proof of Lemma~\ref{lem:characterization_customers}. Indeed, it follows from the fact that $(\blambda,\bmu) \in \mathcal{A}^\pi_{S}$ that there exist a buyer realization $(\bell,\bu,v) \in \mathcal{C}_S(\blambda,\bmu)$ and $k \in \{1,\ldots,K\}$  satisfying $R^{T_S \circ \pi}(\bell,\bu,v) < R^\pi(\bell,\bu,v) = p_k$. Consider any such buyer realization and $k$. We observe that
\begin{align}
\min_{\bsigma \in \textsc{Face}(\blambda,\bmu,\bzero)} \min_{\bx \in \mathcal{H}_S(\bsigma)} \pi(\bx) = \min_{\bx \in \cup_{\bsigma \in \textsc{Face}(\blambda,\bmu,\bzero)} \mathcal{H}_S(\bsigma)} \pi(\bx) \ge \min_{\bx \in [\bell,\bu]}\pi(\bx) = R^\pi(\bell,\bu,v) = p_k. \label{line:characterization_customers:oneside}
\end{align}
Indeed, the first equality follows from algebra. The inequality follows from Claim~\ref{claim:characterization_customers:interior}. The second equality follows from Assumption~\ref{ass:discrete} (which implies that $p_1,\ldots,p_K > 0$) and from the fact that $R^\pi(\bell,\bu,v) = p_k$, which together imply that $\mathbb{I} \left \{ \min_{\bx \in [\bell,\bu]} \pi(\bx) \le v \right \} =1$. The third equality follows from our choice of the buyer realization $(\bell,\bu,v) \in \mathcal{C}_S(\blambda,\bmu)$. Moreover, we observe that
\begin{align}
p_k &> R^{T_S \circ \pi}(\bell,\bu,v) 
= \mathbb{I} \left \{\min_{\bx \in [\bell,\bu]}(T_S \circ \pi)(\bx) \le v \right \} \cdot \min_{\bx \in [\bell,\bu]}(T_S \circ \pi)(\bx) \notag\\
&\ge \mathbb{I} \left \{\min_{\bx \in [\bell,\bu]}\pi(\bx) \le v \right \} \cdot \min_{\bx \in [\bell,\bu]}(T_S \circ \pi) (\bx) 
= \min_{\bx \in [\bell,\bu]}(T_S \circ \pi)(\bx)\notag \\
&\ge \min_{\bx \in \cup_{\bsigma \in \textsc{Rect}(\blambda,\bmu)} \mathcal{H}_S(\bsigma)} (T_S \circ \pi)(\bx)
= \min_{\bsigma \in \textsc{Rect}(\blambda,\bmu)} \min_{\bx \in \mathcal{H}_S(\bsigma)} (T_S \circ \pi) (\bx) \notag \\
&= \min_{\bsigma \in \textsc{Rect}(\blambda,\bmu)} \min_{\bx \in \mathcal{H}_S(\bsigma)} \min_{\bx' \in \mathcal{H}_S(\bsigma_S(\bx))} \pi(\bx') 
= \min_{\bsigma \in \textsc{Rect}(\blambda,\bmu)} \min_{\bx \in \mathcal{H}_S(\bsigma)} \pi(\bx) \notag \\
&= \min_{\bdelta \in \{-1,0,1\}^D} \min_{\bsigma \in \textsc{Face}(\blambda,\bmu,\bdelta)} \min_{\bx \in \mathcal{H}_S(\bsigma)} \pi(\bx) 
= 
\min_{\bdelta \in \{-1,0,1\}^D: \bdelta \neq \bzero} \min_{\bsigma \in \textsc{Face}(\blambda,\bmu,\bdelta)} \min_{\bx \in \mathcal{H}_S(\bsigma)} \pi(\bx).\label{line:characterization_customers:otherside}
\end{align}
Indeed, the first inequality follows from our choice of the buyer realization $(\bell,\bu,v) \in \mathcal{C}_S(\blambda,\bmu)$. The first equality is the definition of $R^{T_S \circ \pi}(\bell,\bu,v)$. The second inequality follows from the definition of $T_S \circ \pi$, which implies that $(T_S \circ \pi)(\bx) \le \pi(\bx)$ for all $\bx \in [0,1]^D$. The second equality follows from the fact that $R^\pi(\bell,\bu,v) > 0$, which implies that $ \mathbb{I} \left \{\min_{\bx \in [\bell,\bu]}\pi(\bx) \le v \right \} = 1$. The third inequality follows from Claim~\ref{claim:characterization_customers:boundary}. The third equality follows from algebra. The fourth equality is the definition of $T_S \circ \pi$. The fifth equality follows from the fact that $\bsigma_S(\bx) = \bsigma$ for all $\bx \in \mathcal{H}_S(\bsigma)$. The sixth equality follows from the fact that $\textsc{Rect}(\blambda,\bmu) = \cup_{\bsigma \in \{-1,0,1\}^D} \textsc{Face}(\blambda,\bmu,\bdelta)$. The seventh equality follows from the fact that $ \min_{\bsigma \in \textsc{Face}(\blambda,\bmu,\bzero)} \min_{\bx \in \mathcal{H}_S(\bsigma)} \pi(\bx) \ge p_k$, which is established in line~\eqref{line:characterization_customers:oneside}. Combining lines~\eqref{line:characterization_customers:oneside} and \eqref{line:characterization_customers:otherside}, our proof of Lemma~\ref{lem:characterization_customers} is complete. 
\hfill \halmos 
\end{proof}

\begin{proof}{Proof of Lemma~\ref{lem:proportion_bucket}.}
For notational convenience, let $\bbz \triangleq (\bbl,\bbu) \in \mathcal{F}$. For each Borel set $\mathcal{Z} \in \mathscr{B}(\R^{2D})$, recall that $\int_{\mathcal{Z}} d \bz$ denotes the volume of $\mathcal{Z}$ with respect to the Lebesgue measure. Our proof of Lemma~\ref{lem:proportion_bucket} will use the following intermediary claim. 

\begin{claim} \label{claim:second_inequality:probability:intermediary}
$\lim \limits_{\rho \downarrow 0}\sup \limits_{\mathcal{Z} \in \mathscr{B}(\R^{2D}): \int_{\mathcal{Z}} d \bz \le \rho} \Prb(\bbz \in \mathcal{Z}) = 0. $
\end{claim}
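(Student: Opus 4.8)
The plan is to exploit Assumption~\ref{ass:continuous}, which guarantees that $\bbz = (\bbl,\bbu)$ admits a density $g$ with respect to the Lebesgue measure on $\R^{2D}$; hence $\Prb(\bbz \in \mathcal{Z}) = \int_{\mathcal{Z}} g(\bz)\, d\bz$ for every $\mathcal{Z} \in \mathscr{B}(\R^{2D})$, and the claim reduces to the standard uniform-absolute-continuity property of the integral of an $L^1$ function. The only mild subtlety is that $g$ need not be bounded, so one cannot simply bound $\Prb(\bbz \in \mathcal{Z})$ by a constant times $\int_{\mathcal{Z}} d\bz$; instead I would use a truncation argument.

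Concretely, I would fix an arbitrary $\epsilon > 0$. Since $g$ is a probability density, $g \in L^1(\R^{2D})$, and the functions $\bz \mapsto g(\bz)\,\mathbb{I}\{g(\bz) > n\}$ are dominated by $g$ and converge pointwise to $0$ as $n \to \infty$, so the dominated convergence theorem supplies an integer $n_\epsilon$ with $\int_{\{\bz\,:\,g(\bz) > n_\epsilon\}} g(\bz)\, d\bz \le \epsilon/2$. Setting $\rho_\epsilon \triangleq \epsilon/(2 n_\epsilon) > 0$, for any Borel $\mathcal{Z}$ with $\int_{\mathcal{Z}} d\bz \le \rho_\epsilon$ one splits the integral over $\{g \le n_\epsilon\}$ and $\{g > n_\epsilon\}$ to obtain
\[
\Prb(\bbz \in \mathcal{Z}) = \int_{\mathcal{Z} \cap \{g \le n_\epsilon\}} g\, d\bz + \int_{\mathcal{Z} \cap \{g > n_\epsilon\}} g\, d\bz \le n_\epsilon \int_{\mathcal{Z}} d\bz + \int_{\{g > n_\epsilon\}} g\, d\bz \le n_\epsilon \rho_\epsilon + \tfrac{\epsilon}{2} = \epsilon,
\]
and taking the supremum over all such $\mathcal{Z}$ yields $\sup_{\mathcal{Z}\,:\,\int_{\mathcal{Z}} d\bz \le \rho_\epsilon} \Prb(\bbz \in \mathcal{Z}) \le \epsilon$.

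To finish, I would observe that $\rho \mapsto \sup_{\mathcal{Z}\,:\,\int_{\mathcal{Z}} d\bz \le \rho} \Prb(\bbz \in \mathcal{Z})$ is nondecreasing in $\rho$ and, by the display above, is at most $\epsilon$ for every $\rho \le \rho_\epsilon$; since $\epsilon > 0$ was arbitrary, the limit as $\rho \downarrow 0$ equals $0$, which is exactly Claim~\ref{claim:second_inequality:probability:intermediary}. I do not anticipate a genuine obstacle here, as the argument is purely measure-theoretic; the step deserving the most care is the appeal to dominated convergence to show that the tail mass $\int_{\{g > n\}} g\, d\bz$ vanishes, since this is precisely where integrability (rather than boundedness) of the density $g$ is used.
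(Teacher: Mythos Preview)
Your argument is correct. It is the textbook ``uniform absolute continuity of the integral'' proof: truncate the density $g$ at level $n_\epsilon$, control the tail $\int_{\{g>n_\epsilon\}} g$ by dominated convergence, and bound the bounded part by $n_\epsilon$ times the Lebesgue measure of $\mathcal{Z}$. The paper takes a genuinely different route: it fixes $\eta>0$, picks near-maximizing sets $\mathcal{Z}_n^\eta$ with Lebesgue measure at most $1/n^2$, applies reverse Fatou to get $\limsup_n \Prb(\bbz\in\mathcal{Z}_n^\eta)\le \Prb(\bbz\in \limsup_n \mathcal{Z}_n^\eta)$, and then uses Borel--Cantelli (since $\sum 1/n^2<\infty$) to conclude that $\limsup_n \mathcal{Z}_n^\eta$ is Lebesgue-null, whence $\Prb(\bbz\in\limsup_n\mathcal{Z}_n^\eta)=0$ by absolute continuity. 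Your approach is more direct and uses the density $g$ explicitly; the paper's approach only uses the set-theoretic statement ``Lebesgue-null implies $\Prb$-null'' and avoids truncation altogether, at the cost of the somewhat less standard Fatou/Borel--Cantelli machinery. Both are short and fully rigorous; yours is arguably the more natural one given that Assumption~\ref{ass:continuous} hands you a density.
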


\begin{proof}{Proof of Claim~\ref{claim:second_inequality:probability:intermediary}.}
We observe that the function $h(\rho) \triangleq \sup_{\mathcal{Z} \in \mathscr{B}(\R^{2D}): \int_{\mathcal{Z}} d \bz \le \rho} \Prb(\bbz \in \mathcal{Z}) $ is bounded and monotonically increasing for $\rho > 0$. Therefore, it follows from the fact that $h(\rho) \in [0,1]$ that the limit $\lim_{\rho \downarrow 0}h(\rho)$ exists. Hence, 
\begin{align}\label{line:second_inequality:probability:1}
\lim \limits_{\rho \downarrow 0}\sup \limits_{\mathcal{Z} \in \mathscr{B}(\R^{2D}): \int_{\mathcal{Z}} d \bz \le \rho} \Prb(\bbz \in \mathcal{Z}) &= \lim_{n \to \infty} \sup \limits_{\mathcal{Z} \in \mathscr{B}(\R^{2D}): \int_{\mathcal{Z}} d \bz \le \frac{1}{n^2}} \Prb(\bbz \in \mathcal{Z}).
\end{align}
Choose any arbitrary constant $\eta > 0$. For each integer $n \in \N$, let $\mathcal{Z}_n^\eta \in \mathscr{B}(\R^{2D})$ denote a Borel set that satisfies both the inequality $\int_{\mathcal{Z}_n^\eta} d \bz \le \frac{1}{n^2}$ and the inequality 
$\sup_{\mathcal{Z} \in \mathscr{B}(\R^{2D}): \int_{\mathcal{Z}} d \bz \le \frac{1}{n^2}} \Prb(\bbz \in \mathcal{Z}) \le \Prb\left(\bbz \in \mathcal{Z}^\eta_n \right) + \eta.$ 
Therefore, we observe that
\begin{align}
\lim_{n \to \infty} \sup \limits_{\mathcal{Z} \in \mathscr{B}(\R^{2D}): \int_{\mathcal{Z}} d \bz \le \frac{1}{n^2}} \Prb(\bbz \in \mathcal{Z})\le \limsup_{n \to \infty} \Prb \left(\bbz \in \mathcal{Z}^\eta_n \right) + \eta 
&\le \Prb \left( \bbz \in \limsup_{n \to \infty} \mathcal{Z}^\eta_n\right) + \eta,\label{line:second_inequality:probability:2}
\end{align}
where the first inequality is the definition of $\mathcal{Z}^\eta_n$ and the second inequality follows from Fatou's lemma. Finally, we observe that $
\sum_{n = 1}^\infty \int_{\mathcal{Z}_n^\eta} d \bz \le \sum_{n = 1}^\infty \frac{1}{n^2} < \infty$, and so it follows from the Borel-Cantelli lemma that the equality $
\int_{\limsup \limits_{n \to \infty} \mathcal{Z}_n^\eta} d \bz = 0$ holds. 
Combining this equality with lines~\eqref{line:second_inequality:probability:1}-\eqref{line:second_inequality:probability:2} and the fact that $\bbz$ is absolutely continuous with respect to the Lebesgue measure on $\R^{2D}$ (Assumption~\ref{ass:continuous}), we have shown that $\lim_{\rho \downarrow 0}\sup_{\mathcal{Z} \in \mathscr{B}(\R^{2D}): \int_{\mathcal{Z}} d \bz \le \rho} \Prb(\bbz \in \mathcal{Z}) \le \eta. $ Since $\eta > 0$ was chosen arbitrarily, our proof of Claim~\ref{claim:second_inequality:probability:intermediary} is complete.\looseness=-1 
\hfill \halmos 
\end{proof}
 
Equipped with Claim~\ref{claim:second_inequality:probability:intermediary}, we are ready to prove Lemma~\ref{lem:proportion_bucket}. We observe that
\begin{align*}
\lim \limits_{S \to \infty} \sup \limits_{\mathcal{A}_S \subseteq \mathcal{B}_S: \frac{| \mathcal{A}_S|}{| \mathcal{B}_S|} \le \alpha_S } \sum_{(\blambda,\bmu) \in \mathcal{A}_S} p_S(\blambda,\bmu) &=\lim_{S \to \infty} \sup_{\mathcal{A}_S \subseteq \mathcal{B}_S: | \mathcal{A}_S| \le \alpha_S S^{2D}} \sum_{(\blambda,\bmu) \in \mathcal{A}_S} p_S(\blambda,\bmu) \\
&\le \lim_{S \to \infty} \sup_{\mathcal{Z} \in \mathscr{B}(\R^{2D}): \int_{\mathcal{Z}} d \bz \le \alpha_S } \Prb(\bbz \in \mathcal{Z})= 0.
\end{align*}
Indeed, the first equality follows from algebra and from the fact that $|\mathcal{B}_S| = S^{2D}$. The inequality follows from algebra and from the fact that $\int_{\{(\bell,\bu) \in \mathcal{F}: \blambda = \bsigma_S(\bell) \text{ and }\bmu = \bsigma_S(\bu) \}} d \bz = \frac{1}{S^{2D}}$ for all $(\blambda,\bmu)\in \mathcal{B}_S$. The second equality follows from Claim~\ref{claim:second_inequality:probability:intermediary} and from the fact that $\lim_{S \to \infty} \alpha_S = 0$.
\hfill \halmos 
\end{proof}

\begin{proof}{Proof of Lemma~\ref{lem:num_anchors}.}
Recall that $\bdelta \neq \bzero$. Therefore, 
\begin{align*}
\left| \mathfrak{A}_{S,M}(\bdelta) \right|&= \left| \bigcup_{d \in \{1,\ldots,D\}: \delta_d = 1} \left \{ (\blambda,\bmu) \in \mathcal{B}^{\ge}_{S,M}: \lambda_d = 1 \right \} \cup \bigcup_{d \in \{1,\ldots,D\}: \delta_d = -1} \left \{ (\blambda,\bmu) \in \mathcal{B}^{\ge}_{S,M}: \mu_d = S \right \} \right| \\
&\le \sum_{d \in \{1,\ldots,D\}: \delta_d = 1} \left| \left \{ (\blambda,\bmu) \in \mathcal{B}^{\ge}_{S,M}: \lambda_d = 1 \right \} \right| + \sum_{d \in \{1,\ldots,D\}: \delta_d = -1} \left| \left \{ (\blambda,\bmu) \in \mathcal{B}^{\ge}_{S,M}: \mu_d = S \right \} \right| \\
&\le \sum_{d \in \{1,\ldots,D\}: \delta_d = 1} S^{2D - 1} + \sum_{d \in \{1,\ldots,D\}: \delta_d = -1} S^{2D - 1} \le D S^{2D - 1}.
\end{align*}
The first equality is the definition of $\mathfrak{A}_{S,M}(\bdelta)$. The first inequality follows from algebra. The second inequality  follows from the fact that $\mathcal{B}^{\ge}_{S,M} \subseteq \mathcal{B}_S$. The third inequality follows from algebra.\looseness=-1
\hfill \halmos 
\end{proof}

\begin{proof}{Proof of Lemma~\ref{lem:unique_line}.}
Our proof consists of showing for all $(\blambda,\bmu) \in \mathcal{B}^{\ge}_{S,M}$ that there exists an anchor $(\bar{\blambda},\bar{\bmu}) \in \mathfrak{A}_{S,M}(\bdelta)$ that satisfies $(\blambda,\bmu) \in \mathcal{L}_{S,M}(\bar{\blambda},\bar{\bmu},\bdelta)$. Indeed, 
consider any arbitrary $(\blambda,\bmu) \in \mathcal{B}^{\ge}_{S,M}$, and define $\iota \triangleq \min \left \{ \iota' \in \{0,1,\ldots,S\}: (\blambda - \iota' \bdelta, \bmu - \iota' \bdelta) \in \mathcal{B}^{\ge}_{S,M} \right \}$ and $
(\bar{\blambda},\bar{\bmu}) \triangleq (\blambda - \iota \bdelta, \bmu - \iota \bdelta)$. 
It follows immediately from these definitions that $(\bar{\blambda},\bar{\bmu}) \in \mathcal{B}^{\ge}_{S,M}$, that $(\bar{\blambda},\bar{\bmu})$ is a $\bdelta$-anchor, and that $(\blambda,\bmu)$ is contained in the line emanating from anchor $(\bar{\blambda},\bar{\bmu}) \in \mathfrak{A}_{S,M}(\bdelta)$. Since $(\blambda,\bmu) \in \mathcal{B}^{\ge}_{S,M}$ was chosen arbitrarily, our proof is complete. 
\hfill \halmos 
\end{proof}

\begin{proof}{Proof of Lemma~\ref{lem:not_too_close}.}
Let $(\blambda,\bmu),(\blambda',\bmu') \in \mathcal{L}_{S,M}(\bar{\blambda},\bar{\bmu},\bdelta)$, and suppose there exists an integer $\iota \in \{1,\ldots, M-1 \}$ such that 
 $(\blambda',\bmu') = (\blambda,\bmu) + \iota (\bdelta,\bdelta)$. We observe that
\begin{align*}
\textsc{Face}(\blambda,\bmu,\bdelta) 
&= \left \{ \bsigma: \; \begin{aligned}
\sigma_d &= \lambda'_d - \iota \delta_d && \text{ for all } d \in \{1,\ldots,D\} \text{ such that } \delta_d = -1 \\ 
\sigma_d &= \mu'_d - \iota \delta_d && \text{ for all } d \in \{1,\ldots,D\} \text{ such that } \delta_d = 1 \\ 
\sigma_d &\in \{\lambda'_d - \iota \delta_d + 1,\ldots,\mu_d' - \iota \delta_d- 1\} &&\text{ for all } d \in \{1,\ldots,D\} \text{ such that } \delta_d = 0 
\end{aligned}\right \} \\
&= \left \{ \bsigma: \; \begin{aligned}
\sigma_d &= \lambda'_d + \iota && \text{ for all } d \in \{1,\ldots,D\} \text{ such that } \delta_d = -1 \\ 
\sigma_d &= \mu'_d - \iota && \text{ for all } d \in \{1,\ldots,D\} \text{ such that } \delta_d = 1 \\ 
\sigma_d &\in \{\lambda'_d + 1,\ldots,\mu_d' - 1\} &&\text{ for all } d \in \{1,\ldots,D\} \text{ such that } \delta_d = 0 
\end{aligned}\right \} \\
&\subseteq \left \{ \bsigma: \; 
\sigma_d \in \{\lambda'_d + 1,\ldots,\mu_d' - 1\} \text{ for all } d \in \{1,\ldots,D\}
\right \} = \textsc{Face}(\blambda',\bmu',\bzero).
\end{align*}
The first equality follows from the definition of $\textsc{Face}(\blambda,\bmu,\bdelta)$ and from the fact that $(\blambda',\bmu') = (\blambda,\bmu) + \iota (\bdelta,\bdelta)$. The second equality follows from substituting $\delta_d$. To see why the inclusion holds, we observe that  $(\blambda',\bmu') \in \mathcal{B}^{\ge}_{S,M}$ implies that $ \mu_d' - \lambda_d' \ge M$ for all $d \in \{1,\ldots,D\}$. Therefore, for all $d \in \{1,\ldots,D\}$, it follows from the fact that $\mu'_d - \lambda'_d \ge M $ and from the fact that $\iota \in \{1,\ldots,M-1 \}$ that 
$\lambda_d' + \iota \in \left \{ \lambda_d' + 1,\ldots, \lambda_d' + M - 1 \right \} \subseteq \left \{ \lambda_d' + 1,\ldots, \mu_d' - 1 \right \}$ 
and $
\mu_d' - \iota \in \left \{ \mu_d' - (M-1),\ldots,\mu_d' - 1 \right \} \subseteq \left \{ \lambda_d' + 1,\ldots, \mu_d' - 1 \right \}. $ 
The third equality is the definition of $\textsc{Face}(\blambda',\bmu',\bdelta)$.\looseness=-1 
\hfill \halmos
\end{proof}

\bibliographystyle{ormsv080}
\bibliography{references}

\ECSwitch
\renewcommand{\thesection}{Appendix}

\vspace{-8mm}
\begin{APPENDICES}
\section{Necessity of Assumption~\ref{ass:continuous}} \label{sec:discussion:continuous}
We show in this section that Assumption~\ref{ass:continuous} is necessary for asymptotic consistency, in the sense that relaxing the absolute continuity requirement can invalidate Theorem~\ref{thm:main}.
\begin{example} \label{example:1}
Let $D = 2$ and $\epsilon \in [0,\sfrac{1}{4}]$. Let $\bbx \in [0,1]^2$ be a random vector drawn uniformly from the perimeter of the circle centered at $(\sfrac{1}{2},\sfrac{1}{2}) $ with radius $\sfrac{1}{4}$. Let $\bbl = \bbx - \mathbf{1}
\epsilon$ and $\bbu = \bbx + \mathbf{1} \epsilon$. If $\epsilon > 0$, then we observe that  $\Prb(L_d < U_d) = 1$ for all $d \in \{1,2\}$. However, we observe for all $\epsilon \in [0,\sfrac{1}{4}]$ that $(\bbl,\bbu)$ is not drawn from an probability distribution that is absolutely continuous with respect to the Lebesgue measure on $\R^{2D}$; indeed,  Figure~\ref{fig:example_1}a shows that $\Prb((\bbl,\bbu) \in \mathcal{Z}_L \times \mathcal{Z}_U)  = 1$ for $\mathcal{Z}_L \triangleq \{(\sfrac{1}{4} \cos(\theta) + \sfrac{1}{2} - \epsilon,\sfrac{1}{4} \sin(\theta)+ \sfrac{1}{2} - \epsilon): \theta \in [0,2\pi) \}$ and $\mathcal{Z}_U \triangleq \{(\sfrac{1}{4} \cos(\theta) + \sfrac{1}{2} + \epsilon,\sfrac{1}{4} \sin(\theta)+ \sfrac{1}{2} + \epsilon): \theta \in [0,2\pi) \}$, and we observe from Figure~\ref{fig:example_1}a that the Lebesgue measure with respect to $\R^{2D}$ of $\mathcal{Z}_L \times \mathcal{Z}_U \subset \R^{2D}$ is equal to zero.  Hence, we observe for every $\epsilon \in [0,\sfrac{1}{4}]$ that 
Assumption~\ref{ass:continuous} is violated.\looseness=-1 

To show that the asymptotic consistency guarantee from Theorem~\ref{thm:main} is invalidated for this example, let the set of feasible prices be $\mathcal{P} = \{\sfrac{1}{3},\sfrac{1}{2}\}$, and let $V$ be a random variable that is independent of $(\bbl,\bbu)$ and satisfies $\Prb(V = \sfrac{1}{3}) = \Prb(V = \sfrac{1}{2}) = \sfrac{1}{2}$. Since $V$ is independent of $(\bbl,\bbu)$, it is easy to see that there exists an optimal solution for \eqref{prob:opt} that satisfies $\pi^*(\bx) = \sfrac{1}{3}$ for all $\bx \in [0,1]^2$ and that the objective value of \eqref{prob:opt} is $\nu^* = \sfrac{1}{3}$.\footnote{The fact that $V$ is independent of $(\bbl,\bbu)$ implies that either $\pi_1(\cdot) = \frac{1}{3}$ or $\pi_2(\cdot) = \frac{1}{2}$ is an optimal solution for \eqref{prob:opt}. We observe that $J^*(\pi_1) = \frac{1}{2} \Exp \left[ R^{\pi_1}(\bbl,\bbu,\frac{1}{3}) \right] + \frac{1}{2} \Exp \left[ R^{\pi_1}(\bbl,\bbu,\frac{1}{2}) \right] = \frac{1}{2} \cdot \frac{1}{3} + \frac{1}{2} \cdot \frac{1}{3} = \frac{1}{3}$ and $J^*(\pi_2) = \frac{1}{2} \Exp \left[ R^{\pi_2}(\bbl,\bbu,\frac{1}{3}) \right] + \frac{1}{2} \Exp \left[ R^{\pi_2}(\bbl,\bbu,\frac{1}{2}]) \right] = \frac{1}{2} \cdot  0 + \frac{1}{2} \cdot \frac{1}{2} = \frac{1}{4}$. Thus, we conclude that an optimal solution for \eqref{prob:opt} is $\pi^* = \pi_1$ and the objective value of \eqref{prob:opt} is $\nu^* = \frac{1}{3}$. } However, Figure~\ref{fig:example_1}b illustrates that there almost surely exists a pricing policy $\pi: [0,1]^2 \to \{\sfrac{1}{3},\sfrac{1}{2}\}$ that satisfies the equality $R^\pi(\bbl^i,\bbu^i,\sfrac{1}{3}) = \sfrac{1}{3}$ for all $i \in \{1,\ldots,N\}$ such that $V^i = \sfrac{1}{3}$ and satisfies the equality $R^\pi(\bbl^i,\bbu^i,\sfrac{1}{2}) = \sfrac{1}{2}$ for all $i \in \{1,\ldots,N\}$  such that $V^i = \sfrac{1}{2}$. Hence, we conclude from the strong law of large numbers that $\lim_{N \to \infty} \widehat{\nu}_N = (\sfrac{1}{2})(\sfrac{1}{3}) + (\sfrac{1}{2}) (\sfrac{1}{2}) = \sfrac{5}{12} \neq \nu^*$ almost surely. 
\end{example}
\begin{figure}[t]
\centering
\FIGURE{
\begin{minipage}{\linewidth}
\centering
\subfloat[]{
\includegraphics[width=0.47\textwidth]{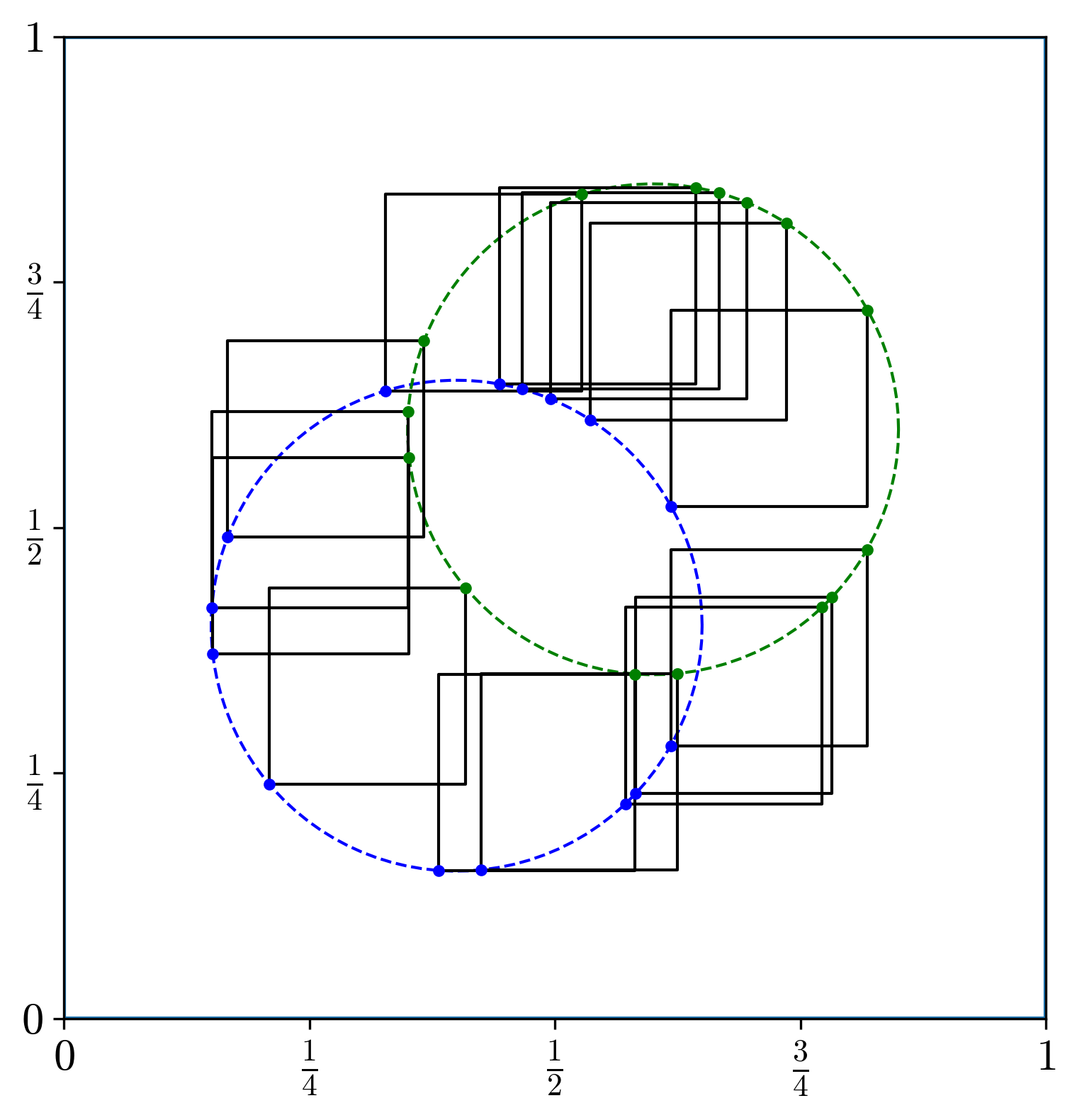}
}
\subfloat[
]{
\includegraphics[width=0.47\textwidth]{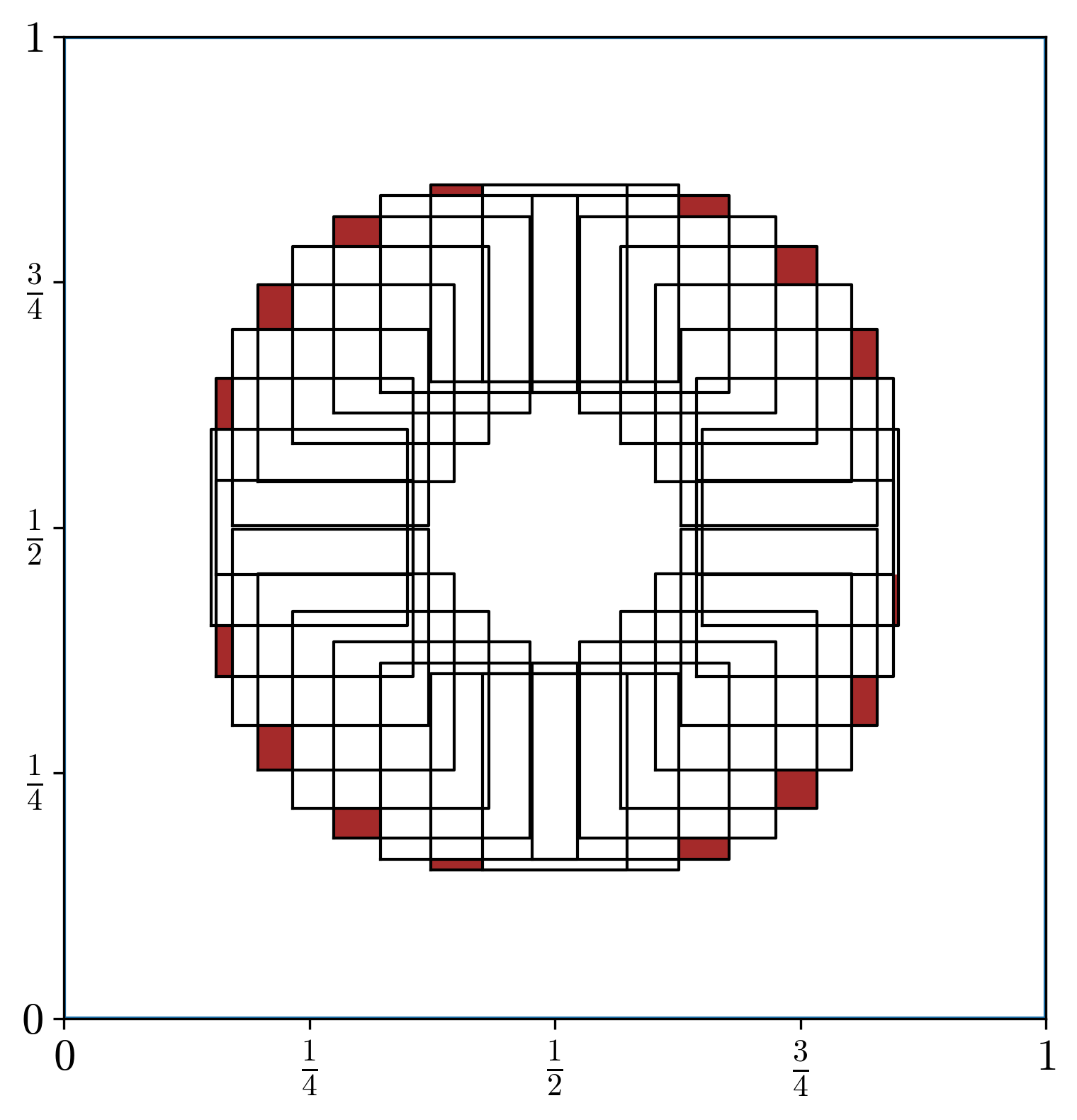}
}
\vspace{0.5em}
\end{minipage}
}
{Visualization of Example~\ref{example:1}\label{fig:example_1}} 
{Both figures show case where $\epsilon = 0.1$. \emph{(a)} $N = 15$. Blue dots show $\bbl^1,\ldots,\bbl^{15}$ and green dots show $\bbu^1,\ldots,\bbu^{15}$. Blue and green dashed lines show the sets $\mathcal{Z}_L$ and $\mathcal{Z}_U$, respectively. \emph{(b)} $N = 30$. The figure shows a pricing policy $\pi: [0,1]^2 \to \{\sfrac{1}{3},\sfrac{1}{2}\}$ in which the low price $\pi(\bx) = \sfrac{1}{3}$ is offered if and only if $\bx \in [0,1]^2$ is in a red shaded region.  We observe that this pricing policy offers the low price only in the outermost corners of alternating squares, and that the low price is not offered in any of the other squares. Following this strategy, we conclude for a given sample $(\bbl^1,\bbu^1,V^1),\ldots,(\bbl^N,\bbu^N,V^N)$ that we construct a pricing policy that satisfies the equality $\min_{\bx \in [\bbl^i,\bbu^i]} \pi(\bx) = \sfrac{1}{3}$ for all $i \in \{1,\ldots,N\}$  such that $V^i = \sfrac{1}{3}$ and satisfies the equality $\min_{\bx \in [\bbl^i,\bbu^i]} \pi(\bx) = \sfrac{1}{2}$ for all $i \in \{1,\ldots,N\}$   such that $V^i = \sfrac{1}{2}$. Note for this figure that $\bbx^1,\ldots,\bbx^{30}$ are equally spaced around the perimeter of the circle, rather than drawn uniformly over the perimeter, for the sake of visual clarity. }
\end{figure}

\section{Experiment Details} \label{sec:MILP}
Our numerical experiment in \S\ref{sec:numerics} involves solving \eqref{prob:saa}. In this appendix, we provide the mixed-integer linear program reformulation of~\eqref{prob:saa} under Assumption~\ref{ass:discrete} that is used in the numerical experiment.\looseness=-1

First, let $\mathcal{U}^i = [\bbl^i, \bbu^i]$ be the hyperrectangle of buyer $i$'s features. We then define the following collection of subsets of buyers: 
\begin{align*}
\mathfrak{I} \triangleq \bigg\{\mathcal{I} \in 2^{\{1,\ldots,N\}}: \bigcap_{i \in\mathcal{I}} \mathcal{U}^i  \setminus \bigcup_{i \in \{1,\ldots,N\} \setminus\mathcal{I}} \mathcal{U}^i \neq \emptyset \bigg\}.
\end{align*}
For each $\mathcal{I} \in \mathfrak{I}$, let $\mathcal{W}_{\mathcal{I}} \triangleq \bigcap_{i \in\mathcal{I}} \mathcal{U}^i  \setminus \bigcup_{i \in \{1,\ldots,N\} \setminus\mathcal{I}} \mathcal{U}^i$ denote the subset of features corresponding to $\mathcal{I}$. Note that $\{\mathcal{W}_{\mathcal{I}}:\mathcal{I} \in \mathfrak{I}\}$ is a collection of disjoint sets whose union is equal to $\cup_{i \in \{1,\ldots,N\}} \mathcal{U}^i$. 

Next, for each buyer $i$, let $\mathfrak{I}^{i}$ denote the subset of $\mathfrak{I}$ such that $\mathcal{U}^i = \cup_{\mathcal{I} \in \mathfrak{I}^{i}} \mathcal{W}_{\mathcal{I}}$. We observe that 
\begin{align*}
\sup \limits_{\pi: \mathcal{X} \rightarrow \mathcal{P}} \frac{1}{N} \sum \limits_{i=1}^N \mathscr{R}^\pi\left(\bbl^i,\bbu^i, V^i \right)&=\sup \limits_{\pi: \mathcal{X} \rightarrow \mathcal{P}} \frac{1}{N} \sum \limits_{i=1}^N \mathbb{I} \left \{\inf_{\bx \in [\bbl^i,\bbu^i]} \pi (\bx ) \le V^i \right \} \cdot \inf_{\bx \in [\bbl^i,\bbu^i]}\pi (\bx ) \\
&=\sup \limits_{\pi: \mathcal{X} \rightarrow \mathcal{P}} \frac{1}{N} \sum \limits_{i=1}^N \mathbb{I} \left \{\min_{\mathcal{I} \in \mathfrak{I}^i} \inf_{\bx \in \mathcal{W}_{\mathcal{I}}} \pi (\bx ) \le V^i \right \} \cdot \min_{\mathcal{I} \in \mathfrak{I}^i} \inf_{\bx \in \mathcal{W}_{\mathcal{I}}}\pi (\bx ) \\
&=\sup \limits_{\tilde{\pi}: \mathfrak{I} \rightarrow \mathcal{P}} \frac{1}{N} \sum \limits_{i=1}^N \mathbb{I} \left \{\min_{\mathcal{I} \in \mathfrak{I}^i} \tilde{\pi}(\mathcal{I})\le V^i \right \} \cdot \min_{\mathcal{I} \in \mathfrak{I}^i} \tilde{\pi}(\mathcal{I}),
\end{align*}
where the first equality follows from the definition of revenue function, the second equality is due to $\mathcal{U}^i:= [\bbl^i,\bbu^i]= \cup_{\mathcal{I} \in \mathfrak{I}^{i}} \mathcal{W}_{\mathcal{I}}$, and the last equality is because for any pricing policy $\pi(\cdot)$, we can define an equivalent policy based on set $\mathfrak{I}$ as $\tilde{\pi}(\mathcal{I}):=\inf_{\bx \in \mathcal{W}_{\mathcal{I}}}\pi (\bx)$.
Hence, we have the following mixed-integer linear program reformulation:
\[
\begin{array}{c@{\quad}l@{\quad}l@{\quad}}
\underset{\bm{b}, \blambda}{\textnormal{maximize}} & \displaystyle \dfrac{1}{N} \sum_{i = 1}^N \sum_{k = 1}^K p_k b_{ik} \\[5mm]
\textnormal{subject to} 
& b_{ik} \leq \mathbb{I}\{p_k\le V^i\} &~\forall i = 1,\dots,N, \; k = 1,\dots,K \\[5mm]
& \displaystyle b_{ik} \leq \sum_{\kappa = 1,\dots,K: \kappa \geq k} \lambda_{\mathcal{I}\kappa} &~\forall i = 1,\dots,N, \; k = 1,\dots,K, \;\mathcal{I} \in \mathfrak{I}: i \in \mathfrak{N}_{\mathcal{I}} \\[5mm]
& \displaystyle b_{ik} \leq \sum_{\mathcal{I} \in \mathfrak{I}: i \in \mathfrak{N}_{\mathcal{I}}} \lambda_{\mathcal{I}k} &~\forall i = 1,\dots,N, \; k = 1,\dots,K \\[5mm]
& \displaystyle \sum_{k = 1}^K \lambda_{\mathcal{I}k} = 1 &~\forall\mathcal{I} \in \mathfrak{I} \\[5mm]
& \displaystyle \sum_{k = 1}^K b_{ik} \leq 1 &~\forall i = 1,\dots,N \\[5mm]
& \bm{b} \in \{0,1\}^{N \times K}, \; \blambda \in \{0,1\}^{|\mathfrak{I}| \times K}.
\end{array}
\]
Here, for each $\mathcal{I} \in \mathfrak{I}$, $\mathfrak{N}_{\mathcal{I}} = \{i \in \{1,\dots,N\}: \mathcal{W}_\mathcal{I} \subseteq \mathcal{U}^i\}$ denotes the largest subset of buyers whose hyperrectangle of features contains $\mathcal{W}_{\mathcal{I}}$, $\lambda_{\mathcal{I}k} = 1$ if and only if a region is offering the price $p_k$, while $b_{ik} = 1$ if and only if
\[
k \in \argmax_{\kappa = 1,\dots,K}\left\{\kappa: \min_{\mathcal{I} \in \mathfrak{I}: i \in \mathfrak{N}_{\mathcal{I}}} p_\kappa \lambda_{\mathcal{I}k} \le V^i\right\} = \argmax_{\kappa = 1,\dots,K}\left\{\kappa: p_\kappa \cdot \min_{\mathcal{I} \in \mathfrak{I}: i \in \mathfrak{N}_{\mathcal{I}}} \lambda_{\mathcal{I}k} \le V^i\right\}.
\]
\end{APPENDICES}
\end{document}